\newtheorem{thm}{Theorem}[section]
\newtheorem{lem}[thm]{Lemma}
\newtheorem{prop}[thm]{Proposition}
\theoremstyle{definition}
\theoremstyle{remark}
\newtheorem{rem}[thm]{Remark}
\numberwithin{equation}{section}
\newcommand{\Real}{\mathbb R}
\newcommand{\eps}{\varepsilon}
\newcommand{\one}[1]{\mathbf{1}_{\{#1\}}}
\renewcommand{\P}{\mathbb{P}}
\newcommand{\E}{\mathbb{E}}
\newcommand{\F}{\mathcal{F}}
\newcommand{\sign}{\mathrm{sign}}
\newcommand{\Res}{\mathrm{Res}}
\renewcommand{\Re}{\mathrm{Re}}
\renewcommand{\Im}{\mathrm{Im}}
\def\Xint#1{\mathchoice
   {\XXint\displaystyle\textstyle{#1}}%
   {\XXint\textstyle\scriptstyle{#1}}%
   {\XXint\scriptstyle\scriptscriptstyle{#1}}%
   {\XXint\scriptscriptstyle\scriptscriptstyle{#1}}%
   \!\int}
\def\XXint#1#2#3{{\setbox0=\hbox{$#1{#2#3}{\int}$}
     \vcenter{\hbox{$#2#3$}}\kern-.5\wd0}}
\def\dashint{\Xint-}
\begin{document}

\title[Eigenproblems for fractional covariance operators]
{Exact asymptotics in eigenproblems for fractional Brownian covariance operators}

\author{P. Chigansky}%
\address{Department of Statistics,
The Hebrew University,
Mount Scopus, Jerusalem 91905,
Israel}
\email{pchiga@mscc.huji.ac.il}

\author{M. Kleptsyna}%
\address{Laboratoire de Statistique et Processus,
Universite du Maine,
France}
\email{marina.kleptsyna@univ-lemans.fr}

\thanks{P. Chigansky is supported by ISF 558/13
grant}
\keywords{
Gaussian processes, 
fractional Brownian motion,
spectral asymptotics,
eigenproblem, 
small ball probabilities,
optimal linear filtering,
Karhunen–-Lo\`{e}ve expansion
}%

\date{\today}%
\begin{abstract}

Many results in the theory of Gaussian processes rely on the eigenstructure of the covariance operator.
However, eigenproblems are notoriously hard to solve explicitly and closed form solutions are known only 
in a limited number of cases. 
In this paper we set up a framework for the spectral analysis of the fractional type covariance operators, 
corresponding to an important family of processes, which includes the fractional Brownian motion and its noise.
We obtain accurate asymptotic approximations for the eigenvalues and the eigenfunctions. Our results provide a key to 
several problems, whose solution is long known in the standard Brownian case, but was missing in the more general 
fractional setting. This includes computation of the exact limits of $L^2$-small ball probabilities and asymptotic analysis of
singularly perturbed integral equations, arising in mathematical physics and applied probability.

\end{abstract}

\maketitle


\section{Introduction}

The eigenproblem for a centered random process $X=(X_t; t\in [0,1])$ with covariance function
$R(s,t)=\E X_tX_s$ and the corresponding covariance operator
$$
(Kf)(t) := \int_0^1 R(s,t)f(s)ds, \quad t\in [0,1]
$$
consists of finding all nontrivial pairs $(\lambda, \varphi)$ satisfying the equation 
\begin{equation}
\label{eigpr}
K \varphi = \lambda \varphi.
\end{equation}

For a self adjoint positive definite operator $K$, compact in $L^2(0,1)$, this problem is well known to have countably
many solutions: the eigenvalues $\lambda_n$ are nonnegative and converge to zero, when put in the decreasing order, and the 
normalized eigenfunctions $\varphi_n$ form an orthonormal basis in $L^2(0,1)$.
This fact has numerous applications in stochastic processes: the Karhunen–-Lo\`{e}ve series expansion \cite{BTA04},
equivalence and orthogonality of Gaussian measures \cite{Sh66}, asymptotics of the small ball probabilities \cite{LiS01}, 
sampling from heavy tailed distributions \cite{VT13}, non-central limit theorems \cite{LRMT17} are only a few problems to mention. 
However the eigenvalues and eigenfunctions are notoriously hard to find explicitly.

One notable exception is the standard Brownian motion $B=(B_t;t\in [0,1])$  for which
\begin{equation}
\label{Bmlambda}
\lambda_n = \frac 1 {(n-\frac 1 2)^2\pi^2} \quad \text{and}\quad \varphi_n(t) = \sqrt 2 \sin \left(n-\tfrac 1 2 \right)\pi t,
\quad n=1,2,...
\end{equation}
These well known formulas are easily found by reducing \eqref{eigpr} to a simple boundary value problem for an ordinary differential equation.
Similar reduction often works for other processes, derived from the Brownian motion, such as the Brownian bridge,
the Ornstein–-Uhlenbeck process, etc.  In essence it puts the original eigenproblem into the framework of Sturm–-Liouville
type theory for differential operators (see \cite{NN04ptrf}).

This approach  does not apply to covariance operators with a more complicated structure, including processes
with long range dependence. An important example is the {\em fractional} Brownian motion $B^H=(B^H_t; t\in [0,1])$, $H\in (0,1)$, that is,
the centered Gaussian process with covariance function
\begin{equation}
\label{RfBm}
R(s,t) = \frac 1 2 \Big(t^{2H}+s^{2H}-|t-s|^{2H}\Big), \quad s,t\in [0,1].
\end{equation}
The parameter $H$ is the Hurst exponent of $B^H$ and the standard Brownian motion corresponds to $H=\frac 1 2$.
The fBm is a useful and interesting process, which has been extensively studied since its introduction
in \cite{MvN68}. It is the only self-similar Gaussian process with exponent $H$, whose increments are stationary.
For $H\ne \frac 1 2$ it is neither a semimartingale nor a Markov process and
for $H>\frac 12$ its increments are positively correlated and have  long range dependence
$$
\sum_{n=1}^\infty \E B^H_1 (B^H_n-B^H_{n-1}) = \infty.
$$
The diversity of properties makes the fBm useful in modeling (see e.g. \cite{BLL14}), various aspects of the related theory and 
applications can be found in \cite{EM02}, \cite{BHOZ},  \cite{M08}, \cite{Nu06}, \cite{Ta03}.

The two main operators of interest in the fractional setting are
\begin{align}
\label{KfBm}
(K f)(t) & = \int_0^1 \frac 1 2 \big(t^{2H}+s^{2H}-|t-s|^{2H}\big) f(s)ds \\
\label{KfBn}
(\widetilde K f)(t) & =\frac {d}{dt} \int_0^1  H |t-s|^{2H-1} \sign(t-s)f(s) ds.
\end{align}
The first one is the covariance operator of the fBm itself and
the second operator corresponds to the fractional Brownian {\em noise}, that is, the formal derivative of the fBm.
More precisely, $\widetilde K$ determines the correlation structure of stochastic integrals 
of deterministic functions through the formula
$$
\E \int_0^1 f dB^H \int_0^1 g dB^H = \langle f, \widetilde K g\rangle,
$$
which makes it a useful tool in stochastic analysis.

For both operators the eigenproblem does not admit a closed form solution and turns out to be more
complicated than its standard Brownian counterpart; therefore accurate approximations are of considerable
interest. Finding asymptotics of the ordered sequence of eigenvalues is a classical theme in functional
analysis and the exact expression for the leading asymptotic term can be often derived by a number of available techniques.
Further refinement is more specific to particular structure of the kernel and is usually much harder to obtain.
Even less common are results on asymptotic approximation of the corresponding eigenfunctions.
This is hardly surprising, since, as we will see, the latter requires sharp estimates on the residual beyond the first order term
in the eigenvalues asymptotics.

For the operator $K$ only the first order asymptotics of the eigenvalues is known for all values of $H\in (0,1)$, see \eqref{Bron}
below. 
The operator $\widetilde K$ has been extensively studied for $H>\frac 1 2$, in which case the derivative and
integration in \eqref{KfBn} are interchangeable. Consequently $\widetilde K$ reduces to the integral operator
with the weakly singular kernel
\begin{equation}
\label{KfBn2}
(\widetilde K f)(t) = \int_0^1 H(2H-1) |s-t|^{2H-2}f(s)ds.
\end{equation}
For such operators precise eigenvalues asymptotics is known up to the second order, see \eqref{lambdaU}.
For $H<\frac 1 2$ operator $\widetilde K$ does not admit an integral form and its spectral properties
have never been studied before.
In fact it is not entirely obvious at the outset that such eigenproblem has (countably many) solutions.
This is nevertheless the case, since the inverse $\widetilde K^{-1}$ turns out to be an integral operator with a weakly
singular kernel (see \eqref{kappa} below).

In this paper we show how the eigenproblem \eqref{eigpr} for the operators defined in \eqref{KfBm} and \eqref{KfBn}
can be reduced to an equivalent integro-algebraic system of equations, which turns out to be more amenable to 
asymptotic analysis.  The method is based on the technique, used for solving the Riemann boundary value problems, 
rather than the more common Sturm–-Liouville type theory, mentioned above.

\medskip 

Specifically, for both operators and all values of $H\in (0,1)$

\medskip

\begin{enumerate}
\addtolength{\itemsep}{0.7\baselineskip}
\renewcommand{\theenumi}{\alph{enumi}}

\item  we derive asymptotic approximation for the eigenfunctions with respect to the uniform norm

\item  we improve previously known asymptotics of the eigenvalues up to the second order term

\end{enumerate}

\medskip

Our results  can be useful in a variety of applications. To demonstrate the ideas, we consider in this paper 
the following two problems:

\medskip

\begin{enumerate}
\addtolength{\itemsep}{0.7\baselineskip}
\renewcommand{\theenumi}{\roman{enumi}}

\item\label{Pi} refinement of the {\em exact} asymptotics of $L^2$-small ball probabilities for the fractional Brownian motion,
which was previously known only on the logarithmic scale (\cite{Br03a}, \cite{NN04tpa}, \cite{LP04}).

\item\label{Pii} analysis of singularly perturbed integral equations, arising in mathematical physics \cite{Ukai} and 
applied probability (e.g., stochastic analysis \cite{Ch03b}, \cite{CCK}, statistical inference \cite{CKstat}).

\end{enumerate}

\medskip

The key to \eqref{Pi} is the exact formula for the second order asymptotic term of the fBm eigenvalues;
problem \eqref{Pii} requires the exact asymptotics of the scalar products $\langle h, \varphi_n \rangle$
for certain functions $h$, which becomes available through the approximation for the eigenfunctions.

\medskip

The analysis framework, set up in this paper, is applicable with some nontrivial adjustments to processes, related to 
the fBm, such as the corresponding bridge and the Ornstein--Uhlenbeck process, integrated fractional Brownian motion, etc.
The results in this direction will be reported in the forthcoming work \cite{ChKM1,ChKM2}.  

\subsection{Frequently used notations} For numerical sequences $a_n$ and $b_n$, we write $a_n\propto b_n$ if 
$a_n=c b_n$ with a constant $c\ne 0$ and $a_n \sim b_n$ and $a_n \simeq b_n$ if 
$a_n \propto b_n (1+o(1))$ and $a_n =  b_n (1+o(1))$ respectively as $n\to\infty$. 
Similarly, $f(x)\propto g(x)$ stands for the equality $f(x)=c g(x)$ with a constant $c\ne 0$, etc.

Our main reference for the boundary value problems is the text \cite{Gahov}, where the particular form of the 
Riemann problem used below is detailed in \S 43. Another reference on the subject is the classic book \cite{M46}. 
Unless stated otherwise, standard principle branches of the multivalued complex functions will be used.
We will frequently work with functions, sectionally holomorphic on the complex plane cut along the real line.
For such a function $\Psi$, we denote by $\Psi^+(t)$ and $\Psi^-(t)$ the limits of $\Psi(z)$ as $z$ approaches the point $t$
on the real line from above and below respectively:
$$
\Psi^\pm (t) := \lim_{z\to t^\pm }\Psi(z).
$$

One of our main tools in what follows is the Sokhotski--Plemelj formula, which states that the 
Cauchy-type integral 
$$
\Phi(z):= \frac 1{2\pi i}\int_0^\infty \frac{\phi(\tau)}{\tau-z}d\tau
$$
of a H\"older continuous function $\phi$ on $\Real_{>0}$, possibly with integrable singularities at the origin and infinity, 
defines a function, analytic on the cut plane  $\mathbb{C}\setminus \mathbb{R}_{\ge 0}$, whose limits across the real axis satisfy 
$$
\Phi^\pm (t) = \frac 1{2\pi i} \dashint_0^\infty \frac{\phi(\tau)}{\tau-t}d\tau \pm  \frac 1 2 \phi(t), \quad t\in \Real_{>0},
$$
where the dash integral stands for the Cauchy principle value. In particular, $\Phi(z)$ is a solution of the 
Riemann boundary value problem $\Phi^+(t) -\Phi^-(t) = \phi(t)$, $t\in \Real_{>0}$, which vanishes at infinity. 
This fact is the main building block in all boundary problems to be encountered in this paper. 

\section{The main results}

\subsection{The fractional Brownian motion} 

The exact first order asymptotics for the eigenvalues of the fBm covariance operator $K$ defined in \eqref{KfBm} was found in \cite{Br03a, Br03b},
where it is shown that for any $\delta>0$
\begin{equation}
\label{Bron}
\lambda_n = \frac{\sin (\pi H)\Gamma(2H+1)}{(n\pi)^{2H+1}} + o \left(n^{-\frac{(2H+2)(4H+3)}{4H+5}+\delta}\right) \quad   \text{as}\ n\to\infty,
\end{equation}
with $\Gamma(\cdot)$ being the standard gamma function. The leading asymptotic term in \eqref{Bron} was also obtained in \cite{NN04tpa} and 
\cite{LP04} using different methods. To the best of our knowledge, nothing was known previously about the eigenfunctions.

\medskip

The following theorem reveals a more detailed asymptotic structure of the solutions to the eigenproblem \eqref{eigpr} with 
the covariance operator \eqref{KfBm}: 

\begin{thm}\label{main-thm-fbm}\

\medskip

\noindent
{\bf 1.} For $H\in (0,1)$ the eigenvalues are given by the formula 
\begin{equation}
\label{lambda}
\lambda_n =  \sin (\pi H)\Gamma(2H+1) \nu_n^{-2H-1} \qquad n=1,2,...
\end{equation}
where the sequence $\nu_n$ satisfies 
\begin{equation}
\label{nun}
\nu_n =  \Big(n -\frac 1 2\Big)\pi + \frac {1-2H}{ 4}  \pi  + \arcsin \frac{\ell_H}{\sqrt{1+\ell_H^2}}+O(n^{-1}) \quad \text{as}\ n\to\infty,
\end{equation}
with the constant 
$
\displaystyle
\ell_H:=\frac
{
\sin \frac{\pi}{2} \frac{ H-1/2}{H+1/2}
}
{\sin \frac \pi 2 \frac{1}{H+1/2}}
$

\medskip
\noindent
{\bf 2.} The corresponding normalized eigenfunctions admit the approximation 
\begin{multline}
\label{phin}
\varphi_n(x)  
= 
 \sqrt 2 \sin\bigg( \nu_{n} x+\frac {2H-1}{ 8}  \pi-\arcsin \frac{\ell_H}{\sqrt{1+\ell_H^2}}\bigg) \\
 + \frac {\sqrt{2H+1}} { \pi }  \int_0^{\infty}    \rho_0(u)
\bigg(
e^{-  x \nu_n u} \frac{ u-\ell_H}{ \sqrt{1+\ell_H^2}}-
(-1)^{n}   e^{-  (1-x) \nu_nu}\
\bigg)du + \nu_n^{-1}  r_n(x),
\end{multline}
where the residual $r_n(x)$ is bounded by a constant, depending only on $H$, and
$\rho_0(u)$ is the explicit function, defined in \eqref{rhofla} below.

\medskip
\noindent
{\bf 3.}  The eigenfunctions satisfy
\begin{equation}
\label{bndp}
\varphi_n(1) = -(-1)^{n} \sqrt{2H+1}   \big(1+O(n^{-1})\big)
\end{equation}
and their averages are given by
\begin{equation}
\label{ave}
\int_0^1 \varphi_n(x)dx =  -\sqrt{\frac{2H+1}{1+\ell_H^2}}\; \nu_n^{-1}.
\end{equation}

\end{thm}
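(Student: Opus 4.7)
The plan is to reformulate the eigenproblem \eqref{eigpr} for the covariance operator \eqref{KfBm} as a Riemann boundary value problem for a sectionally holomorphic function, solve it explicitly via the Sokhotski--Plemelj formula recalled in the introduction, and then read off the asymptotics of the eigenvalues and eigenfunctions from the explicit solution. First, I would substitute $\lambda = \sin(\pi H)\Gamma(2H+1)\nu^{-2H-1}$ (so that $\nu$ is the natural large parameter) and introduce the Laplace transform $\widehat\varphi(z) = \int_0^1 e^{-\nu z t}\varphi(t)\,dt$. Integrating the eigenequation against $e^{-\nu z t}$, and using the integral representation of the fractional power $|t-s|^{2H}$ to decouple $s$ and $t$, converts \eqref{eigpr} into a functional equation on the cut plane $\mathbb{C}\setminus\mathbb{R}_{\ge 0}$ of the form
\[
\widehat\varphi^+(u)-G(u)\widehat\varphi^-(u) = g(u), \quad u>0,
\]
where $G(u)$ is an explicit function built from the symbol of the fractional kernel and $g(u)$ is linear in a few unknown boundary traces of $\varphi$ (values and fractional derivatives at $0$ and $1$).

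Second, I would solve this Riemann problem. Construct the canonical solution $X(z)$ of the homogeneous problem $X^+ = GX^-$ by applying the Sokhotski--Plemelj formula to $\log G$, choosing the branch to match the required behavior at $0$ and $\infty$. Then $\widehat\varphi(z)/X(z)$ is representable as a Cauchy integral of $g/X^+$, giving $\widehat\varphi(z)$ explicitly in terms of the unknown boundary data. The requirement that $\widehat\varphi$ be the Laplace transform of a function supported on $[0,1]$ — in particular, that it be entire off the cut with the correct decay at infinity — produces finitely many solvability conditions on those boundary data, which close into a single transcendental equation determining the admissible values of $\nu$.

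Third, I would analyze this equation for large $\nu$. The leading phase of the equation is a sine evaluated at $\nu$ shifted by contributions coming from (i) the index/winding number of $G$, which I expect to yield the $(1-2H)\pi/4$ term, and (ii) the phase of $X$ at a distinguished point, yielding the $\arcsin$ term with the constant $\ell_H$. This gives \eqref{lambda}--\eqref{nun}. Inverting the Laplace transform by contour deformation around the cut then recovers the eigenfunction representation \eqref{phin}: the sine contribution comes from residues at the poles of $\widehat\varphi$ along the imaginary axis (determined by the eigenvalue condition), and the integral against the function $\rho_0$ arises from the jump across the cut, producing the two boundary layers near $x=0$ and $x=1$, with the factor $(-1)^n$ recording the parity imposed by the eigenvalue condition. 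The uniform error $\nu_n^{-1}r_n(x)$ must be controlled by sharp estimates on the Cauchy integrals defining $X(z)$ far from the critical strip and by a matched-asymptotics argument combining the bulk oscillation with the two boundary layers. Finally, \eqref{bndp} follows from evaluating \eqref{phin} at $x=1$ and using the trigonometric identity $\sin(\nu_n + \tfrac{2H-1}{8}\pi - \arcsin(\ell_H/\sqrt{1+\ell_H^2})) = -(-1)^n\cdot(\tfrac{1}{\sqrt{1+\ell_H^2}}) + o(1)$ together with the cancellation of the $x=0$ exponential at $x=1$, while \eqref{ave} is an exact identity obtained by evaluating $\widehat\varphi_n(0^+)$ directly from the explicit Riemann-problem formula rather than from \eqref{phin}.

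The principal obstacle will be executing Step 2 cleanly: constructing $X(z)$ with the correct branching at both $0$ and $\infty$, identifying the precise number of free parameters, and turning the solvability conditions into a tractable scalar equation whose large-$\nu$ asymptotics produce the constants in \eqref{nun} exactly (and not merely up to implicit factors). A close second is obtaining the \emph{uniform} $O(\nu_n^{-1})$ control on the residual $r_n(x)$ in \eqref{phin}, which demands sharp matching between the bulk sine and the two exponential boundary layers, so that the $L^2$-normalization constant $\sqrt{2}$ and the boundary constant $\sqrt{2H+1}$ emerge with no error at the leading order.
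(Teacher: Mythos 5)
Your overall blueprint—Laplace transform, conversion to a boundary value problem on the cut plane, asymptotics for large $\nu$, and Laplace inversion—matches the paper's strategy. But the sketch omits, or misstates, three things that are precisely where the work lives.

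First, the Riemann problem you write down is not of the form you claim. Using the identity $|t-s|^{2H}\propto\int_0^\infty \tau^{\alpha-1}e^{-\tau|t-s|}d\tau$ and the Laplace transform, the paper obtains
$\widehat\varphi(z)-\widehat\varphi(0)=-\Lambda(z)^{-1}\big(e^{-z}\Phi_1(-z)+\Phi_0(z)\big)$
where $\Phi_0,\Phi_1$ are Cauchy integrals of the unknown boundary functions $u(0,\cdot)$, $u(1,\cdot)$ (whole functions of $t$, not ``a few boundary traces''). Since $\widehat\varphi$ is entire, removal of the jump across $\mathbb R_{>0}$ yields a Riemann problem not for $\widehat\varphi$ but for the pair $(\Phi_0,\Phi_1)$, and crucially the ``forcing'' on the right of each jump condition involves the \emph{other} unknown evaluated on the negative axis ($\Phi_1(-t)$, resp.\ $\Phi_0(-t)$). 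After decoupling by taking the sum $S$ and difference $D$ divided by the canonical homogeneous solution $X$, each jump condition still has the form $S^+(t)-S^-(t)=2ih(t)e^{-t}S(-t)$, i.e.\ the unknown appears on both sides. This cannot be solved by one application of Sokhotski--Plemelj; it becomes an integral equation for $S(-t)$, and likewise for $D(-t)$.

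Second, and as a consequence, you need the contraction argument. The paper shows that the operator $Af(t)=\frac1\pi\int_0^\infty\frac{h_0(s)e^{-\nu s}}{s+t}f(s)ds$ has norm $\le 1-\eps$ on $L^2(0,\infty)$ uniformly for $\nu$ large (Lemma \ref{lem-A}), giving unique solvability of the integral equations and, more importantly, the quantitative $O(\nu^{-1})$ and $O(\nu^{-2})$ bounds on the building blocks $a_\pm, b_\pm$ (Lemma \ref{lem-bnds}). These bounds are exactly what turn the algebraic condition $\Im\{\xi\bar\eta\}=0$ into \eqref{nun} with a \emph{uniformly} bounded $O(n^{-1})$ residual and what control the remainder $r_n(x)$ in \eqref{phin}. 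Your sketch treats this step as routine matched asymptotics; without the contraction structure it is not clear how to close any of these estimates.

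Third, you never address the enumeration. The transcendental equation for $\nu$ determines a sequence $\nu_n$ only up to an $\alpha$-dependent integer shift of the index: a priori, the natural (decreasing) enumeration of eigenvalues could differ from the one produced by the asymptotic analysis by $k_\alpha\pi$ with $k_\alpha\in\mathbb Z$, and this would corrupt precisely the second-order constant in \eqref{nun} that you are trying to compute. The paper removes the ambiguity by showing $k_\alpha$ is constant in $\alpha$ (using continuity of the spectrum, Lemma \ref{cor}'s uniform residual bound and a perturbation-theory result from \cite{Kato}), then evaluating $k_1$ against the explicit Brownian case $H=\tfrac12$. Without such a calibration, \eqref{lambda}--\eqref{nun} are not established; this is a genuine gap, not a detail.

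Finally, your derivation of \eqref{bndp} and \eqref{ave} by ``evaluating \eqref{phin}'' is not how the paper proceeds and would be difficult to make rigorous: the error term in \eqref{phin} is $O(\nu_n^{-1})$ in the uniform norm, which is the same order as the claimed value of $\int_0^1\varphi_n$. The paper instead extracts both from the $z\to\infty$ behavior of $\Phi_0,\Phi_1$ (formulas \eqref{intphilim}--\eqref{philim}), giving exact identities before any approximation is made. Your parenthetical remark for \eqref{ave} hints at this, but \eqref{bndp} requires the same treatment.
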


\begin{rem}

\
\medskip

\noindent
a) The expression \eqref{lambda} gives the asymptotics of $\lambda_n$, precise up to the second order term:
$$
\lambda_n =  \frac{\sin (\pi H)\Gamma(2H+1)}{(n\pi)^{2H+1}}\Big(1 -C_H n^{-1}+ O(n^{-2})\Big)
\quad   \text{as}\ n\to\infty,
$$
with the constant 
$$
C_H = (2H+2) \left(-\frac 1 2  + \frac {1-2H}{ 4}     + \frac 1 \pi \arcsin \frac{\ell_H}{\sqrt{1+\ell_H^2}}\right).
$$
In particular, the estimate of the residual in \eqref{Bron} is not sharp. 

It is possible to derive an absolute numerical bound for the $O(n^{-1})$ term in \eqref{nun} and a  rough 
bookkeeping of the constants in the proof shows that it does not exceed $100 n^{-1}$. The left plot of  
Figure \ref{fig3} gives an idea of its actual magnitude: the difference  
$$
\left(\frac{\sin (\pi H)\Gamma(2H+1)}{\widetilde{\lambda}_n}\right)^{\frac 1 {2H+1}} -   \widehat \nu_n 
$$
is depicted here versus $n=1,...,40$, where $\widehat \nu_n$ is the expression on the right hand side of \eqref{nun} 
without the $O(n^{-1})$ term and $\widetilde\lambda_{n}$ is a sufficiently accurate numerical approximation of $\lambda_n$, produced by  
the standard Nystrom method. It should be stressed that the numerical precision deteriorates rapidly with $n$ and becomes unreliable 
for our purposes already for $n\ge 50$. This provides a practical motivation for exact asymptotic expansion, such as the one  
obtained in this paper.  

The plot on the right of Figure \ref{fig3} depicts the relative errors of the first and second order approximations:
$$
\frac{\widetilde \lambda_n - \widehat \lambda_n^{(j)}}{\widetilde \lambda_n}\times 100 \% \quad j=1,2
$$
where $\widehat \lambda_n^{(1)}$ and $\widehat \lambda_n^{(2)}$ are the estimates, obtained by the formula \eqref{lambda},
with $\nu_n$ from \eqref{nun}, replaced by one and two of its leading terms respectively. Observe how drastic is the accuracy 
improvement due to the second order correction!

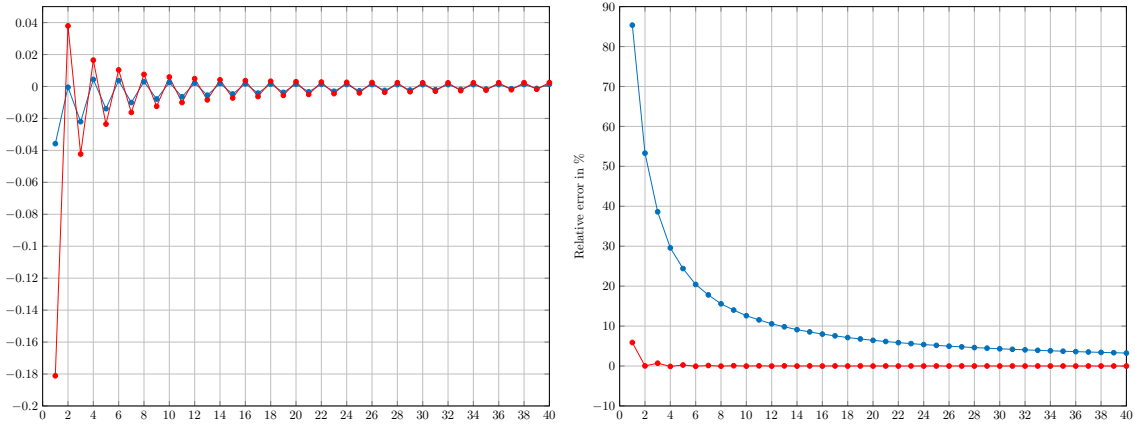
\begin{figure}[ht]
%
%
\definecolor{mycolor1}{rgb}{0.00000,0.44700,0.74100}%
\begin{tikzpicture}[scale = 0.44]

\begin{axis}[%
scaled ticks=false, tick label style={/pgf/number format/fixed},
width=6.028in,
height=4.754in,
at={(1.011in,0.642in)},
scale only axis,
xmin=0,
xmax=40,
xmajorgrids,
ymin=-0.2,
ymax=0.05,
ymajorgrids,
axis background/.style={fill=white}
]
\addplot [color=mycolor1,solid,mark=*,mark options={solid},forget plot]
  table[row sep=crcr]{%
1	-0.0357997713624723\\
2	-0.000469278131831175\\
3	-0.0220441058212213\\
4	0.00442061591464515\\
5	-0.0139119176463307\\
6	0.00365915665094008\\
7	-0.0100563768794117\\
8	0.00296370056874196\\
9	-0.00785182406946916\\
10	0.00246766926819575\\
11	-0.00643001491302897\\
12	0.00211129813628474\\
13	-0.00543610761489788\\
14	0.00184845956270863\\
15	-0.00469983986229039\\
16	0.00165045189296364\\
17	-0.00412959272912161\\
18	0.00149946739276174\\
19	-0.00367168779885674\\
20	0.00138415850115337\\
21	-0.0032925390825369\\
22	0.00129708602800349\\
23	-0.0029699973011077\\
24	0.00123325378307015\\
25	-0.00268880739102428\\
26	0.00118924639009776\\
27	-0.00243807064737211\\
28	0.00116270285167275\\
29	-0.00220974661785078\\
30	0.001151985122263\\
31	-0.0019977387937189\\
32	0.00115596778914551\\
33	-0.00179730580816795\\
34	0.00117389098680576\\
35	-0.00160467407218334\\
36	0.00120526205934368\\
37	-0.00141677500491255\\
38	0.00124978869334313\\
39	-0.00123107048152349\\
40	0.00130733581698905\\
};
\addplot [color=red,solid,mark=*,mark options={solid},forget plot]
  table[row sep=crcr]{%
1	-0.181107649173972\\
2	0.0379390128453636\\
3	-0.0422762807912687\\
4	0.0164779352191573\\
5	-0.0235496617863937\\
6	0.010382348284093\\
7	-0.0162684095707917\\
8	0.00755919337680311\\
9	-0.0124102283746446\\
10	0.00594354251643736\\
11	-0.0100199621446393\\
12	0.00490396469727017\\
13	-0.00838993281586653\\
14	0.00418481838256923\\
15	-0.00720231195394661\\
16	0.00366357068362788\\
17	-0.00629310152762486\\
18	0.00327446766965522\\
19	-0.00556897290083924\\
20	0.0029792738874761\\
21	-0.00497276536091817\\
22	0.0027544001334121\\
23	-0.00446739117967354\\
24	0.0025846331789694\\
25	-0.0040276298167754\\
26	0.00245983016064599\\
27	-0.00363564215832923\\
28	0.00237306319904462\\
29	-0.00327837551242283\\
30	0.00231952248304879\\
31	-0.00294599190972633\\
32	0.00229584004765115\\
33	-0.00263087642503024\\
34	0.00229965604383153\\
35	-0.00232699198146236\\
36	0.00232933379093936\\
37	-0.0020294462372874\\
38	0.0023837650750238\\
39	-0.0017341933394448\\
40	0.00246223620963804\\
};
\end{axis}
\end{tikzpicture}%
%
%
\definecolor{mycolor1}{rgb}{0.00000,0.44700,0.74100}%
\begin{tikzpicture}[scale=0.44]

\begin{axis}[%
width=6.028in,
height=4.754in,
at={(1.011in,0.642in)},
scale only axis,
xmin=0,
xmax=40,
xmajorgrids,
ymin=-10,
ymax=90,
ylabel={Relative error in \%},
ymajorgrids,
axis background/.style={fill=white}
]
\addplot [color=mycolor1,solid,mark=*,mark options={solid},forget plot]
  table[row sep=crcr]{%
1	85.3657896939958\\
2	53.3023416384272\\
3	38.61640198084\\
4	29.583147233911\\
5	24.4068109170332\\
6	20.4183091666859\\
7	17.8102840363685\\
8	15.5810224148333\\
9	14.0149755703545\\
10	12.5946277009701\\
11	11.551332738474\\
12	10.5681289085286\\
13	9.82367612667983\\
14	9.10303010935578\\
15	8.54525518520496\\
16	7.99450885794835\\
17	7.56108223681862\\
18	7.12654335949968\\
19	6.7800820314592\\
20	6.42850706461283\\
21	6.14523744042795\\
22	5.85494983534764\\
23	5.61903468988796\\
24	5.37529960921172\\
25	5.17578303640484\\
26	4.96823597883773\\
27	4.79729808806036\\
28	4.61843716521799\\
29	4.47034736164303\\
30	4.31460921492668\\
31	4.18507235325541\\
32	4.0482442077731\\
33	3.93397775530326\\
34	3.81281114720223\\
35	3.71126345798659\\
36	3.60321333909482\\
37	3.51237099245099\\
38	3.41541516036969\\
39	3.333668200443\\
40	3.24617960811252\\
};
\addplot [color=red,solid,mark=*,mark options={solid},forget plot]
  table[row sep=crcr]{%
1	5.89010955238797\\
2	0.0253155751143845\\
3	0.707265833348227\\
4	-0.101265205366903\\
5	0.24720360620241\\
6	-0.0531991018741044\\
7	0.123538429383822\\
8	-0.0315649057410404\\
9	0.0736957575781913\\
10	-0.0207439313056632\\
11	0.0488196518299204\\
12	-0.0146677924620231\\
13	0.0346428569267587\\
14	-0.0109518150535444\\
15	0.0257950862785135\\
16	-0.00853384203588619\\
17	0.0198929143825667\\
18	-0.00688761254161342\\
19	0.0157481788922733\\
20	-0.00572945849297594\\
21	0.0127149692372556\\
22	-0.00489587231626002\\
23	0.0104177476998054\\
24	-0.00428735379515533\\
25	0.00862612974619867\\
26	-0.00384058639488452\\
27	0.00719242867568438\\
28	-0.00351374298885883\\
29	0.00601844210039713\\
30	-0.00327830784825515\\
31	0.00503681398252186\\
32	-0.00311429895239958\\
33	0.00420010271392608\\
34	-0.00300738565734549\\
35	0.00347410765615294\\
36	-0.00294706389985954\\
37	0.0028336684099158\\
38	-0.00292550393113949\\
39	0.00225995570628776\\
40	-0.0029367758649799\\
};
\end{axis}
\end{tikzpicture}%
\caption{\label{fig3} 
Left: numerical evaluation of the residual in \eqref{nun}: $H=3/4$ (blue) and $H=1/4$ (red).
Right: the relative errors of the first (blue) and second (red) order approximations, $H=3/4$.
}
\end{figure}

\begin{figure}[ht]
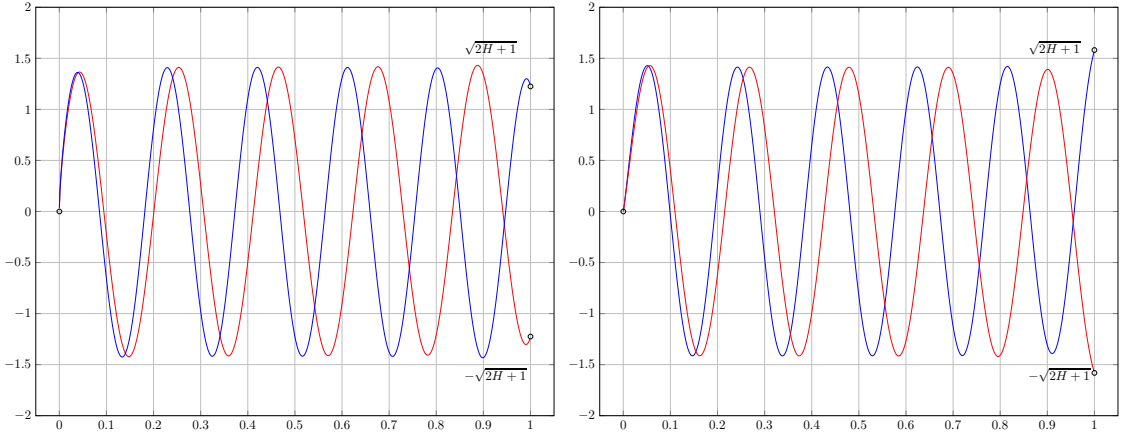

\input{phi25gridon.tex}
\input{phi75gridon.tex}
  \caption{\label{fig1} Typical eigenfunctions for the fBm: $\varphi_{10}$ and $\varphi_{11}$ are depicted here in red and blue respectively for $H=\frac 1 4$ (left) and
  $H=\frac 3 4$ (right)}
\end{figure}

\medskip
\noindent
b) The formulas \eqref{bndp} and \eqref{ave} are obtained as an integral part of the proof, rather than being derived  
directly from the approximation \eqref{phin}. It is possible to obtain qualitative information on the structure of the residual 
$r_n(x)$ and, in fact, to deduce further asymptotic terms for the eigenfunctions. This would give an alternative way of finding 
the asymptotics of scalar products with various functions of interest.  
Results in this direction will be reported elsewhere.

\medskip

\noindent
c) The integral term in \eqref{phin} introduces a boundary layer: for large $\nu_n$ its contribution is negligible with respect to
the oscillatory term in the interior of the interval $[0,1]$. At the boundaries $x=0$ and $x=1$, it is persistent, forcing the values 
$\varphi_n(0)=0$ and $\varphi_n(1) \simeq (-1)^{n-1} \sqrt{2H+1}$.
Thus $\varphi_n$ appears as a shifted sinusoidal, slightly perturbed at the boundaries (see Figure \ref{fig1}).
Note, however, that the contribution of the boundary layer may not be asymptotically negligible on the level of scalar 
products $\langle h, \varphi_n\rangle$, being comparable to that of the oscillatory term.
For $H=\frac 1 2$ the second term in \eqref{nun}, the boundary layer and the shift in the oscillatory term in
\eqref{phin} all vanish, recovering the exact expressions \eqref{Bmlambda} up to $O(n^{-1})$ residual.

\medskip

\noindent
d) A simple calculation shows that adding a constant $\sigma^2>0$ to the covariance function of the standard Brownian motion, that is, 
starting it from a random initial condition with variance $\sigma^2$, introduces a shift of $-\pi/ 2$ 
in both the leading term of the eigenfunctions and in the second order term of the eigenvalues, c.f. \eqref{Bmlambda}: 
$$
\lambda_n = \frac 1 {\nu_n^2} \quad \text{and}\quad \varphi_n(x) = \sqrt 2 \cos (\nu_n x) +  \frac{1}{\sigma^2 \nu_n}\sqrt{2}\sin (\nu_n x)
\quad n=1,2,...
$$
where the sequence $\nu_n$ satisfies 
$$
\nu_n = \Big(n-\frac 1 2\Big) \pi -\frac\pi 2 + O(n^{-1}),\quad n\to\infty.
$$

It can be seen from our proof, that a similar phenomenon occurs in the fractional case: the second order constant term in \eqref{nun} 
changes so that 
$$
\nu_n =  \Big(n -\frac 1 2\Big)\pi + \frac {1-2H}{ 4}  \pi  -\frac \pi 2 +O(n^{-1}) \quad \text{as}\ n\to\infty.
$$
The same quantity is added to the phase of the oscillatory term in \eqref{phin}. Moreover, the values of the eigenfunctions at both 
endpoints of the interval approach $\pm (-1)^n \sqrt{2H+1}$ and their averages vanish at a much faster rate $\nu_n^{-1-2H}$ than in 
\eqref{ave}.

\medskip 

\noindent
e) For $H=1$ the fBm degenerates to the linear drift process $B^1_t = \xi t$ with  $N(0,1)$ random variable $\xi$. The eigenproblem \eqref{eigpr} in this case admits a
simple positive eigenvalue $\lambda_1=1/3$ with the corresponding eigenfunction $\varphi_1(t)=\sqrt{3} t$. The rest of the eigenvalues vanish and an arbitrary orthogonal
basic can be chosen to span the corresponding infinite dimensional eigenspace.
Note that formally this  also fits the asymptotic formula \eqref{lambda}. 

\medskip

\noindent
f) An area of research, somewhat related to the subject of this paper, is approximation of stochastic processes by series of 
deterministic functions. For a given norm $\|\cdot\|$ and a Gaussian process $X=(X_t, t\in [0,1])$, define the approximation error  
$$
\ell_n(X,\|\cdot\|):= \inf \left\{\left(\E \Big\|\sum_{k=n}^\infty \xi_k x_k\Big\|^2\right)^{1/2}: X  = \sum_{k=1}^\infty \xi_k x_k\ \text{a.s.}\right\}
$$
where $\xi_k$'s are i.i.d. $N(0,1)$ random variables and the infimum is taken over all sequences of non-random functions $(x_k)$. 
It is required to estimate the decay rate of the error as $n\to\infty$ and to identify a sequence, which attains the optimal rate. 
Such series expansions are useful for computer simulations of stochastic processes. 

Since the eigenfunctions form the basis which diagonalizes the covariance operator, the solution for this problem with 
the Euclidean norm $\|\cdot\|_2$ is provided by the Karhunen–-Lo\`{e}ve expansion. The corresponding optimal rate is 
$$
\ell_n(B^H,\|\cdot\|_2) \sim n^{-H} \quad \text{as\ } n\to\infty.
$$
The problem becomes more intricate for the uniform norm $\|\cdot\|_\infty$ and it was shown in \cite{KL02} that in this case 
the optimal rate for the fBm is
$$
\ell_n(B^H, \|\cdot\|_\infty) \sim n^{-H}\sqrt{\log n}\quad \text{as\ } n\to\infty.
$$
Several optimal approximating sequences were found in \cite{AT03}, \cite{DzhVZ04}, \cite{I05}.

\medskip 
\noindent 
g) The results of Theorem \ref{main-thm-fbm}, of course, apply to non-Gaussian processes with the same covariance function. 
One important family consists of the Hermite processes (see, e.g.  \cite{MT07}, for details). In this case, the coefficients 
in the Karhunen–-Lo\`{e}ve expansion are non-Gaussian and, still being orthogonal, are no longer independent in general.

\end{rem}

\subsection{The fractional Brownian noise}

As mentioned above, for $H>\frac 12$ the covariance operator \eqref{KfBn} reduces to the integral operator \eqref{KfBn2}.
The exact first order asymptotics of the eigenvalues in this case was found in \cite{Kac55}, \cite{Ros63}, \cite{W63}, \cite{BS70} and
improved in \cite{Ukai} (see also \cite{P74, P03}, \cite{D98}) up to the second order term:
\begin{equation}
\label{lambdaU}
\lambda_n =
\sin (\pi H)\Gamma(2H+1)\left( \left(n -\tfrac 1 2\right)\pi + \frac {1-2H}{ 4}  \pi  + O(n^{-1})\right)^{1-2H}
\quad \text{as\ \ }n\to \infty.
\end{equation}
No results have been reported so far regarding the properties of the corresponding eigenfunctions. 

\medskip

The following theorem asserts that formula \eqref{lambdaU} remains valid for $H<\frac 1 2$ as well and gives an accurate approximation
for the eigenfunctions for all $H\in (0,1)\setminus\{\frac 1 2\}$:

\medskip

\begin{thm}\label{main-thm-fbn}
\

\medskip
\noindent
{\bf 1.} For $H\in (0,1)\setminus\{\frac 1 2\}$
$$
\lambda_n  = 
\sin (\pi H)\Gamma(2H+1) \nu_n^{1-2H},\quad n=1,2,...
$$
where $\nu_n$ satisfies (c.f. \eqref{nun})
$$
\nu_n =  \Big(n -\frac 1 2\Big)\pi + \frac {1-2H}{ 4}  \pi  +O(n^{-1}) \quad \text{as}\ n\to\infty.
$$

\medskip
\noindent
{\bf 2.}
The corresponding normalized eigenfunctions  satisfy
\begin{multline}\label{phin1}
\varphi_n(x)
= \sqrt 2 \sin\left(\nu_n  x      +  \frac {2H+1} 8 \pi \right) \\
+\frac {\sqrt{|2H-1|}} \pi \int_0^\infty \rho_0(u)
\Big(
e^{-x\nu_n  u}-(-1)^n e^{-(1-x)\nu_n u}
\Big)  du
    + n^{-1} r_n(x),
\end{multline}
where the residual $r_n(x)$ is bounded by a constant, depending only on $H$, and $\rho_0(u)$ is the explicit function,
defined in \eqref{rho0fBn}.

\medskip
\noindent
{\bf 3.} The eigenfunctions with odd (even) indices are (anti)symmetric around the midpoint of the interval:
$$
\varphi_n(x)=(-1)^{n+1} \varphi_n(1-x), \quad n=1,2,...
$$
The averages of the symmetric eigenfunctions satisfy
\begin{equation}
\label{phinave}
\langle 1, \varphi_{2n-1}\rangle  \sim   \nu_{2n-1}^{ -\max\big(1,\frac 1 2 +H  \big)  } \qquad  \text{as}\ n\to\infty.
\end{equation}
\end{thm}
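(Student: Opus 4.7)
The plan is to follow the Riemann boundary value problem strategy that underlies Theorem \ref{main-thm-fbm}, adapted to the operator $\widetilde K$. For $H>\tfrac12$ the operator already has the integral representation \eqref{KfBn2}, so the development proceeds essentially in parallel with the fBm case. For $H<\tfrac12$ I would work instead with the inverse $\widetilde K^{-1}$, which is a weakly singular integral operator with the kernel in \eqref{kappa}; the eigenproblem $\widetilde K\varphi=\lambda\varphi$ is then equivalent to $\widetilde K^{-1}\varphi=\lambda^{-1}\varphi$. In either case the idea is to apply the Laplace transform to the eigenequation, use the power-law structure of the kernel to produce a sectionally holomorphic function with an explicit jump across $\mathbb R_{\ge 0}$, and invoke the Sokhotski--Plemelj formula to convert the problem into an integro-algebraic system for the Laplace transform of $\varphi$ coupled with a finite boundary vector built from $\varphi$ and its traces at the endpoints.

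The second-order asymptotics in part 1 then emerges from the characteristic equation of that system: the leading transcendental equation has the form $\sin \nu + c\cos\nu = O(\nu^{-1})$ for an explicit constant $c$ depending only on $H$, and its roots yield the shift $(1-2H)\pi/4$. The phase of the oscillation for $\widetilde K$ differs from that of $K$ by $\pi/2$ (the shift $(2H+1)\pi/8$ in \eqref{phin1} versus $(2H-1)\pi/8$ in \eqref{phin}), which is a direct trace of the extra differentiation in \eqref{KfBn}. Once $\nu_n$ is located, the eigenfunction representation in part 2 is obtained by inverting the Laplace transform of the solved RBVP: the poles at $\pm i\nu_n$ give the oscillatory term $\sqrt 2\sin(\nu_n x+(2H+1)\pi/8)$, while the contribution of the branch cut along $\mathbb R_{\ge 0}$ produces the boundary-layer integral involving the density $\rho_0(u)$ from \eqref{rho0fBn}. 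A uniform bound on $r_n(x)$ follows from contour estimates analogous to those used for Theorem \ref{main-thm-fbm}.

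For part 3, the reflection symmetry $\varphi_n(x)=(-1)^{n+1}\varphi_n(1-x)$ is a structural consequence of the fact that the kernel of $\widetilde K$ (and of $\widetilde K^{-1}$) depends only on $|t-s|$ and is therefore invariant under $x\mapsto 1-x$. Hence $\varphi_n(1-\cdot)$ is again an eigenfunction with the same eigenvalue; simplicity of $\lambda_n$, guaranteed by the asymptotics in part 1 for all large $n$, then forces $\varphi_n(1-\cdot)=\pm\varphi_n$, and the alternating sign is pinned down by matching leading terms of \eqref{phin1} at $x$ and $1-x$. For the average in \eqref{phinave} I would integrate \eqref{phin1} term by term: the oscillatory part contributes $O(\nu_n^{-1})$, while the boundary-layer piece produces $\nu_n^{-1}\int_0^\infty \rho_0(u)(1-e^{-\nu_n u})/u\,du$ plus its mirror image. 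The rate is then dictated by the behaviour of $\rho_0(u)$ as $u\to 0^+$, which for these power-law Sokhotski--Plemelj densities has exponent $|2H-1|$; this yields a boundary-layer contribution of order $\nu_n^{-1}$ for $H<\tfrac12$ and of order $\nu_n^{-(1/2+H)}$ for $H>\tfrac12$, producing the combined rate $\nu_n^{-\max(1,1/2+H)}$.

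The principal obstacle is the regime $H<\tfrac12$, where $\widetilde K$ itself has no integral representation and its spectral theory is not standard: one must first show that the integro-algebraic system actually encodes every eigenpair, and that no spurious or missing solutions appear at the endpoints of $[0,1]$, which requires careful regularity analysis of the boundary traces. A secondary technical point is keeping the residual $r_n(x)$ uniform in $x$ as $H\to\tfrac12$, where the amplitude $\sqrt{|2H-1|}$ vanishes and the boundary layer degenerates into the standard Brownian case \eqref{Bmlambda}.
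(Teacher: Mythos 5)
Your outline captures the broad strategy correctly for part 1 and most of part 2, but two of your proposed steps would not go through as stated, and one of them is explicitly flagged in the paper as insufficient.

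\paragraph{The case $H<\tfrac12$.} You propose to pass to $\widetilde K^{-1}$, which does have an integral representation with the weakly singular kernel \eqref{kappa}. The difficulty is that \eqref{kappa} is \emph{not} a difference kernel and has no representation of the form $\int_0^\infty \kappa(t)\,e^{-t|x-y|}\,dt$, so the Laplace-transform/Riemann-BVP machinery does not attach to $\widetilde K^{-1}$. The paper instead works with $\widetilde K$ directly for $\alpha=2-2H\in(1,2)$ (Lemma \ref{lem61}): because the outer derivative in \eqref{KfBn} keeps the inner kernel exponent $1-\alpha=-(\alpha-1)$ with $\alpha-1\in(0,1)$, the identity \eqref{idnt} still applies with $\alpha$ replaced by $\alpha-1$, and the whole transform calculus goes through without touching the inverse. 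The kernel \eqref{kappa} is used only for auxiliary facts (boundary behaviour of eigenfunctions, a priori bound on $\lambda_1$ in Lemma \ref{lem66}), not as the object to which the BVP method is applied.

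\paragraph{The averages \eqref{phinave}.} Your proposal to integrate \eqref{phin1} term by term is explicitly ruled out by the paper, which states just before Lemma \ref{lem67} that the approximation \eqref{phinphin} ``is not sufficiently accurate to imply \eqref{phinave} directly.'' The reason is quantitative: the target decay $\nu_n^{-\max(1,\,1/2+H)}$ is of the \emph{same order} (when $H<\tfrac12$) or \emph{smaller} (when $H>\tfrac12$) than the $O(n^{-1})$ contribution from the residual $n^{-1}r_n(x)$ after integrating over $[0,1]$, so the residual cannot be discarded. The paper circumvents this via Lemma \ref{lem67}, which computes $\int_0^1 x^{-\beta}\varphi_n(x)\,dx\sim C(\alpha,\beta)\nu_n^{\beta-1}$ for $\beta\in(0,1)$ --- a scale on which the residual \emph{is} negligible --- including a contour argument to verify $C(\alpha,\beta)\ne 0$. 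One then transfers to $\langle 1,\varphi_n\rangle$ through the operator identities $\widetilde K g_0=1$ with $g_0\propto x^{(\alpha-1)/2}(1-x)^{(\alpha-1)/2}$ (for $\alpha<1$) or $\widetilde K\,1=x^{1-\alpha}+(1-x)^{1-\alpha}$ (for $\alpha>1$), converting the flat test function into a power-law test function where Lemma \ref{lem67} applies. Your heuristic about the small-$u$ exponent of $\rho_0$ giving the two rates does not survive the presence of the residual.

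\paragraph{Symmetry.} Your argument (kernel invariance under $x\mapsto 1-x$ plus simplicity of the spectrum) is a reasonable alternative but rests on simplicity for \emph{all} $n$, which the large-$n$ asymptotics alone do not guarantee. The paper obtains the symmetry/antisymmetry structurally: the integro-algebraic system decouples into the two sub-systems \eqref{subsys1}--\eqref{subsys2}, whose solutions force $\Phi_0=\Phi_1$ or $\Phi_0=-\Phi_1$ respectively, and the Laplace-inversion formula \eqref{phixx} then exhibits the (anti)symmetry directly, with the alignment to consecutive indices delivered by Lemma \ref{lem66}(\ref{cl2}).
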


\medskip

\begin{rem}\

\medskip
\noindent
a) Note that for $H<\frac 1 2$ the sequence of eigenvalues increases to $+\infty$, in agreement with noncompactness of 
$\widetilde K$.

\medskip
\noindent
b) The expression \eqref{phin1} consists of the oscillatory and boundary layer terms (cf. \eqref{phin}).

\medskip
\noindent
c) It can be seen from the proof, that for $H>\frac 1 2$ the values of the eigenfunctions at the endpoints of the 
interval converge to nonzero constants. For $H<\frac 1 2$, the explicit formula \eqref{kappa} for the kernel of the 
inverse operator $\widetilde K^{-1}$ implies that in this case, the eigenfunctions vanish at the endpoints.

\end{rem}

\section{Some applications}\label{sec-3}

The asymptotic formulas from Theorems \ref{main-thm-fbm} and \ref{main-thm-fbn} can be useful in various problems of
applied probability,  statistics and mathematical physics.
Below we discuss a number of such applications.

\subsection{Small $\mathbf{L_2}$-ball probabilities}
Given a Gaussian process $X$ and a norm $\|\cdot\|$,
the small ball probability problem consists of computing the asymptotics of  $\P(\|X\|\le \eps)$ as $\eps\to 0$.
This problem inspired much research, some outcomes of which can be traced back in
the excellent survey \cite{LiS01}. For the $L^2$-norm
$$
\|X\|^2_2 = \int_0^1 X(t)^2 dt = \sum_{n=1}^\infty \lambda_n \xi_n^2,
$$
where $\xi_n$ are i.i.d. $N(0,1)$ random variables, and
the complete solution in this case was found in \cite{S74} in terms of the eigenvalues $\lambda_n$.
Computation of the precise asymptotics for particular processes is possible if a sufficiently detailed asymptotic
behavior of the eigenvalues is known.
Roughly speaking, the exact first order asymptotic term of $\lambda_n$ suffices for the {\em logarithmic} asymptotics of $\log \P(\|X\|\le \eps)$
and the exact second term gives the asymptotics of  $\P(\|X\|\le \eps)$ itself, usually precise up to the so called {\em distortion} constant
\begin{equation}
\label{Cd}
C_d = \prod_{n=1}^\infty \left(\frac{\widehat \lambda_n}{\lambda_n}\right)^{\frac 1 2}
\end{equation}
where $\widehat \lambda_n$ is the sequence obtained from $\lambda_n$, by leaving out all but the first two asymptotic terms.
This constant can be found only in a few cases (see, e.g., \cite{N09}) and is typically left to numerical approximation.

\medskip

For the fBm the logarithmic asymptotics was found in \cite{Br03b} (see also \cite{NN04tpa})
$$
\lim_{\eps\to 0}\eps^{\frac 1 { H}}\log \P(\|B^H\|_2 \le \eps)= - \frac{H}{(2H+1)^{\frac {2H+1}{2H}}}
\left(
\frac{\sin (\pi H) \Gamma(2H+1)}{\left(\sin \left(\frac{\pi}{2H+1}\right)\right)^{2H+1}}
\right)^{\frac 1 {2H}} := -\beta(H),
$$
but the exact asymptotics remained unknown for $H\ne \frac 1 2$. The refinement \eqref{lambda}-\eqref{nun}, plugged into
the general result of Theorem 6.2 in \cite{NN04ptrf}, allows to fill this gap:

%
%

\begin{prop}
For all $H\in (0,1)$,
$$
\P(\|B^H\|_2 \le \eps) \simeq C_d(H)
C(H) \eps^{\gamma(H)} \exp \left(-\beta(H)\eps^{-\frac 1 H}\right)
, \quad \eps \to 0
$$
where $C_d(H)$ is the distortion constant, defined in \eqref{Cd}, $C(H)$ is an explicit constant (given by (6.6) in \cite{NN04ptrf}) and

$$
\gamma(H) =  \frac 1{2H} \bigg(3/4+ H^2-(1+2H) \dfrac 1 \pi \arcsin \frac{\ell_H}{\sqrt{1+\ell_H^2}}\bigg),
$$

with $\ell_H$ defined in Theorem \ref{main-thm-fbm}.
\end{prop}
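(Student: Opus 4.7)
The strategy is entirely structural: Theorem 6.2 of \cite{NN04ptrf} provides a general recipe which turns a sufficiently precise two--term expansion of the eigenvalues of a covariance operator into exact small $L^2$--ball asymptotics. Thus the plan is (i) to recast the eigenvalue expansion produced by Theorem \ref{main-thm-fbm} into the normal form required by that recipe, (ii) to read off $\beta(H)$, $\gamma(H)$ and the prefactor from the recipe, and (iii) to check that the residual in \eqref{nun} is small enough for the recipe to be applicable and for the distortion product \eqref{Cd} to converge.

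The key preparatory computation is the following. Writing $\nu_n = n\pi + \alpha_H + O(n^{-1})$ with
$$\alpha_H := -\tfrac{\pi}{2} + \tfrac{1-2H}{4}\pi + \arcsin\frac{\ell_H}{\sqrt{1+\ell_H^2}},$$
formula \eqref{lambda} gives
$$\lambda_n = \frac{\sin(\pi H)\Gamma(2H+1)}{(n\pi)^{2H+1}}\Bigl(1 - (2H+1)\frac{\alpha_H}{n\pi} + O(n^{-2})\Bigr).$$
This is exactly the input shape expected by Theorem 6.2 of \cite{NN04ptrf}: a leading power $n^{-(2H+1)}$ with explicit coefficient $\sin(\pi H)\Gamma(2H+1)$ and an $O(n^{-1})$ correction with explicit coefficient $-(2H+1)\alpha_H/\pi$. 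The logarithmic rate constant $\beta(H)$ arises from the leading coefficient and the exponent $2H+1$ via the standard Laplace--type evaluation already carried out in \cite{Br03b,NN04tpa}, so no new work is required there; only the polynomial prefactor $\eps^{\gamma(H)}$ depends on the subleading constant.

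I would then substitute the coefficient $-(2H+1)\alpha_H/\pi$ into the explicit expression for $\gamma$ given in (6.6) of \cite{NN04ptrf}. A short algebraic simplification, collecting the $-\pi/2$ and $(1-2H)\pi/4$ contributions, should yield
$$\gamma(H) = \frac{1}{2H}\left(\tfrac{3}{4}+H^{2} - (1+2H)\frac{1}{\pi}\arcsin\frac{\ell_H}{\sqrt{1+\ell_H^2}}\right),$$
matching the statement. The multiplicative constant $C(H)$ is simply the constant produced by the same formula in \cite{NN04ptrf}, and $C_d(H)$ is the distortion factor \eqref{Cd}, whose convergence as an infinite product follows because the residual in \eqref{nun} is $O(n^{-1})$, which gives $\widehat{\lambda}_n/\lambda_n = 1+O(n^{-2})$ as $n\to\infty$.

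The only genuine obstacle is of a bookkeeping nature: verifying that the $O(n^{-1})$ control in \eqref{nun} implies the form of remainder hypothesized in Theorem 6.2 of \cite{NN04ptrf} (which is typically stated as $\lambda_n = a n^{-p}(1+b n^{-1}+o(n^{-1}))$ together with a summability condition on the higher--order residuals ensuring convergence of $C_d$). This requires no new analysis of $B^H$ itself, only an unpacking of the $O(n^{-1})$ bound in \eqref{nun} into the explicit form used in \cite{NN04ptrf}. Once that compatibility is confirmed, the proposition reduces to direct substitution and algebraic simplification.
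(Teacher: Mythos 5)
Your proposal is correct and follows exactly the same route as the paper, which likewise treats the proposition as an immediate application of Theorem~6.2 of \cite{NN04ptrf} once the two-term expansion \eqref{lambda}--\eqref{nun} is recast in the required normal form $\lambda_n = a\,n^{-(2H+1)}\bigl(1 + b\,n^{-1} + O(n^{-2})\bigr)$. The only additional content in your write-up is the (correct) identification $\alpha_H/\pi = -\tfrac{1+2H}{4} + \tfrac{1}{\pi}\arcsin\frac{\ell_H}{\sqrt{1+\ell_H^2}}$ and the observation that the $O(n^{-1})$ residual in \eqref{nun} yields $\widehat\lambda_n/\lambda_n = 1 + O(n^{-2})$, ensuring convergence of the distortion product \eqref{Cd} — both of which the paper leaves implicit.
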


\subsection{Singularly perturbed integral equations}

Integral equations of the second kind
\begin{equation}
\label{ie2nd}
\eps u_\eps(x) + (Ku_\eps)(x) = f(x),\quad x\in [0,1]
\end{equation}
are well known to have unique solutions for any positive value of the constant $\eps$, under quite general assumptions on
the self-adjoint operator $K$ and the {\em forcing} function $f$ (see, e.g. \cite{RN55}). On the other hand, equations
of the first kind, formally obtained by setting $\eps=0$ in \eqref{ie2nd}
\begin{equation}
\label{ie1st}
(Ku_0)(x) = f(x),\quad x\in [0,1]
\end{equation}
may have no solutions under the same conditions or, if a solution $u_0$ exists, its properties may be quite different from
those of $u_\eps$. The free term in \eqref{ie2nd} therefore has a regularizing effect and can be viewed as a singular perturbation.
It then makes sense to study the asymptotic properties of $u_\eps$ as $\eps\to 0$.
If the limit equation \eqref{ie1st} does have a unique solution,
the natural questions are whether $u_\eps$ converges to $u_0$ and if it does,
in which sense and how fast.

Singularly perturbed integral equations are frequently encountered in physics and engineering applications and various ad-hoc 
approximation techniques have been proposed for their solutions (see, e.g. \cite{AO87a,AO87b} and references therein).
In general, asymptotic behavior of the solutions depends heavily on the singularities of the kernel.
Rigorous results for kernels with jump singularities can be found in \cite{LS88, LS93}, \cite{Sh06} and,
to the best of our knowledge, the weakly singular case has never been addressed before.

Equation \eqref{ie2nd} with kernels \eqref{KfBm} and \eqref{KfBn} arises in the stochastic analysis and 
optimal linear filtering problems involving fBm. As explained below, the particular forcing functions of interest in such problems 
are $f(u):=1$ and $f(u):=R(u,1)$ and the asymptotics of the corresponding solutions determine the steady state
behavior of the relevant probabilistic quantities.

\subsubsection{Mixed fractional Brownian motion}

Equation \eqref{ie2nd} arises in stochastic analysis of the mixture of independent 
standard and fractional Brownian motions $B$ and $B^H$: 
$$
\widetilde{B}_t = B_t + B_t^H, \quad t\in [0,T], \quad T<\infty.
$$
The process $\widetilde B$, called {\em the mixed} fBm, satisfies a number of curious properties with applications in 
mathematical finance, see \cite{BSV07}. In particular, as shown in \cite{Ch01, Ch03b}, it is a semimartingale if and only if 
$H\in \{\tfrac 1 2\}\cup (\tfrac 3 4,1]$ and, for $H>\tfrac 3 4$, the measure $\mu^{\widetilde B}$, induced by $\widetilde B$ 
on the  space of continuous functions, is equivalent to the standard Wiener measure $\mu^B$. 
On the other hand, it follows from the results in \cite{BP88}, \cite{vZ07} that $\mu^{\widetilde B}$ and $\mu^{B^H}$ are 
equivalent if and only if $H< \tfrac 1 4$.

These and other properties of $\widetilde B$ can be deduced from the canonical innovation representation obtained in \cite{CCK},
which is based on the martingale $M_t = \E(B_t|\F^{\widetilde B}_t)$, $t\in [0,T]$.
For any $H\in (0,1)$, this martingale satisfies
\begin{equation}
\label{repM}
M_t = \int_0^t g(s,t) d\widetilde B_s \quad \text{and}\quad \langle M\rangle_t = \int_0^t g(s,t) ds,\quad t\in (0,T],
\end{equation}
where $g(s,t)$ is the solution of the Wiener-Hopf type integro-differential equation (c.f. \eqref{KfBn})
\begin{equation}
\label{geq}
g(s,t) + \frac {d}{ds} \int_0^t g(r,t) H |s-r|^{2H-1} \sign(s-r)dr = 1, \quad 0<s<t \le T.
\end{equation}

The representation \eqref{repM} is useful in statistical analysis of linear models driven by the mixed fractional noise.
As shown in \cite{CKstat}, the large sample accuracy of parameter estimators in such models is governed by the
growth rate of the quadratic variation bracket $\langle M\rangle_T$ and its derivative $d\langle M \rangle_T/dT$ as $T\to\infty$.
Let us proceed with  $H>\frac 1 2$, for which by \eqref{KfBn2} the equation \eqref{geq} takes the simpler integral form
$$
g(s,t) + \int_0^t g(r,t) c_H  |r-s|^{2H-2} dr = 1, \quad 0<s<t \le T,
$$
with $c_H:=H(2H-1)$. It can be shown that its solution satisfies $g(t,t)>0$ and
$$
\langle M \rangle_t = \int_0^t g^2(s,s)ds, \quad t\ge 0.
$$

Let us define the small parameter $\eps := T^{1-2H}$, then a simple calculation shows that the function
$u_\eps(x):=T^{2H-1}g(xT,T)$ solves the singularly perturbed equation (c.f. \eqref{ie2nd}): 
\begin{equation}\label{WHeqmfbm}
\eps u_\eps(x)  + (\widetilde K u_\eps)(x)  =1, \quad x\in [0,1]
\end{equation}
with the operator $\widetilde K$ in the form \eqref{KfBn2}. 
Equations with such weakly singular kernels are well known to have a unique solution for any $\eps>0$,
which is continuous on the closed interval $[0,1]$, smooth in its interior, but not differentiable at the
endpoints (see, e.g., \cite{VP80}).
The unique solution of the limit equation in this case exists and is known in a closed form, \cite{LB98}:
$$
u_0(x) = a_H x^{\frac 1 2-H}(1-x)^{\frac 1 2 -H}, \quad x\in (0,1)
$$
where $a_H$ is an explicit constant. Note that $u_0$ explodes at the endpoints of the interval.

The martingale bracket and its derivative are related to the solutions of \eqref{WHeqmfbm} by the formulas: 
\begin{align}
\label{m1}
 \langle M \rangle_T  & =     \eps^{\frac{2-2H}{2H-1} }\langle  u_\eps, 1\rangle \\
\label{m2}
\frac {d \langle M \rangle_T}{dT}   &  = \eps^2 u^2_\eps (1).
\end{align}
In view of \eqref{m1}, we are particularly interested in  the {\em weak} convergence
\begin{equation}\label{weak}
\langle u_\eps,  \psi\rangle  \xrightarrow{\eps\to 0} \langle u_0, \psi\rangle,
\end{equation}
for test functions $\psi$ from a large class, which contains constants.
For \eqref{m2} we need to quantify the divergence rate of $u_\eps(1) \to +\infty$.

\medskip

\goodbreak

The following result gives a detailed description of both limits and reveal a curious phase transition 
of the $L_2$ convergence at $H=\frac  23$: 

\begin{prop}\label{thm2}\
\medskip

\begin{enumerate}
\addtolength{\itemsep}{0.7\baselineskip}
\renewcommand{\theenumi}{\roman{enumi}}

\item\label{ii}  $u_\eps \to u_0$ in $L^2(0,1)$ and as $\eps\to 0$,
$$
\|u_\eps -u_0\|_2  \sim  \begin{cases}
\eps^{\frac {1-H}{2H-1} } & H\in (\frac 2 3,1) \\
\eps \sqrt{\log \eps^{-1}} & H=\tfrac 2 3\\
\eps  & H \in (\frac 1 2, \frac 2 3)
\end{cases}
$$

\item\label{thm2-iii}
The solution $u_\eps$ diverges to $+\infty$ at the endpoints $x\in \{0,1\}$ and
$$
u_\eps(0)=u_\eps(1)  \sim \eps^{-\frac 1 2}, \quad \eps\to 0.
$$
\end{enumerate}

\end{prop}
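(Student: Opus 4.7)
The plan is to exploit the spectral expansion of $u_\eps$ in the orthonormal eigenbasis $(\varphi_n)$ of $\widetilde K$ furnished by Theorem \ref{main-thm-fbn}. Since $\widetilde K$ is positive and self-adjoint and $\lambda_n\to 0$ for $H>\tfrac 12$, the unique solution of \eqref{WHeqmfbm} and the solution of the corresponding first-kind equation (which belongs to $L^2$ for $H<1$) admit the representations
\begin{equation*}
u_\eps = \sum_n \frac{\langle 1, \varphi_n\rangle}{\eps + \lambda_n}\, \varphi_n, \qquad u_0 = \sum_n \frac{\langle 1, \varphi_n\rangle}{\lambda_n}\, \varphi_n.
\end{equation*}
By part 3 of Theorem \ref{main-thm-fbn} only the odd-indexed (symmetric) eigenfunctions contribute, and inserting $\lambda_{2k-1}\sim k^{1-2H}$ from part 1 together with $\langle 1,\varphi_{2k-1}\rangle\sim k^{-(1/2+H)}$ from \eqref{phinave} (valid for $H>\tfrac 12$) converts the problem into power counting organized around the crossover index $k_0:=\lfloor\eps^{-1/(2H-1)}\rfloor$ at which $\lambda_{k_0}\asymp\eps$.

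For (\ref{ii}), Parseval's identity gives
\begin{equation*}
\|u_\eps-u_0\|_2^2 \asymp \sum_{k\ge 1}\frac{\eps^2}{k^{3-2H}\bigl(\eps+k^{1-2H}\bigr)^2}.
\end{equation*}
Splitting at $k_0$ and replacing $\eps+\lambda_k$ by its dominant term in each range, the high-range contribution is $\sum_{k>k_0}k^{2H-3}\asymp k_0^{2H-2}\asymp \eps^{(2-2H)/(2H-1)}$, while the low range equals $\eps^2\sum_{k\le k_0}k^{6H-5}$ and exhibits a threshold at $6H-5=-1$, i.e.\ at $H=\tfrac 23$. This produces the three cases $\eps^2$, $\eps^2\log\eps^{-1}$ and $\eps^{(2-2H)/(2H-1)}$ for $H$ below, at, and above $\tfrac 23$ respectively; taking the square root of the dominant contribution in each regime reproduces the rates in (\ref{ii}).

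For (\ref{thm2-iii}), symmetry of the equation forces $u_\eps(0)=u_\eps(1)$, and Remark c) following Theorem \ref{main-thm-fbn} (together with a short verification evaluating \eqref{phin1} at $x=1$) shows $\varphi_{2k-1}(1)\to c_H\ne 0$ as $k\to\infty$. Substituting this and the earlier asymptotics into the pointwise series,
\begin{equation*}
u_\eps(1) = \sum_k \frac{\langle 1,\varphi_{2k-1}\rangle\,\varphi_{2k-1}(1)}{\eps+\lambda_{2k-1}} \asymp \sum_k \frac{k^{-(1/2+H)}}{\eps+k^{1-2H}},
\end{equation*}
and splitting at $k_0$: the low range contributes $\sum_{k\le k_0}k^{H-3/2}\asymp k_0^{H-1/2}\asymp\eps^{-1/2}$, and the high range contributes $\eps^{-1}\sum_{k>k_0}k^{-(1/2+H)}\asymp \eps^{-1}k_0^{1/2-H}\asymp \eps^{-1/2}$. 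Both scales coincide, yielding $u_\eps(1)\asymp\eps^{-1/2}$.

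The main obstacle is to ensure the estimates above are sharp from both sides. The $\sim$-asymptotics of Theorem \ref{main-thm-fbn} provide constant-sign equivalents up to $(1+o(1))$, so cancellation cannot degrade the leading order; what remains is the verification that the $o(1)$ corrections in $\lambda_n$, $\langle 1,\varphi_n\rangle$ and $\varphi_n(1)$ feed into strictly subleading contributions in each sum, which is routine but tedious. The genuinely new input, beyond the theorems of Section 2, is the non-vanishing of the constant $c_H=\lim_{k\to\infty}\varphi_{2k-1}(1)$; this follows from \eqref{phin1} with $x=1$, the expression \eqref{rho0fBn} for $\rho_0$, and the asymptotic form of $\nu_n$, showing that the oscillatory and boundary-layer pieces combine into a non-zero alternating limit.
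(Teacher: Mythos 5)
For part (\ref{ii}) your argument is essentially the paper's: both expand $u_\eps-u_0$ in the eigenbasis of $\widetilde K$, observe via part 3 of Theorem~\ref{main-thm-fbn} that only symmetric (odd) eigenfunctions contribute, plug in $\lambda_n\sim n^{1-2H}$ and $\langle 1,\varphi_n\rangle\sim n^{-1/2-H}$, and do power counting; the paper replaces the sum by $\int_1^\infty\bigl(x^{H-3/2}/(\eps+x^{1-2H})\bigr)^2dx$ and rescales, while you split at the crossover index $k_0\asymp\eps^{-1/(2H-1)}$ --- these are interchangeable bookkeeping devices.

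For part (\ref{thm2-iii}) you take a genuinely different route, and it is worth noting how. You evaluate the spectral series for $u_\eps$ pointwise at $x=1$, so your argument needs two new inputs: uniform convergence of $\sum_n\frac{\langle 1,\varphi_n\rangle}{\eps+\lambda_n}\varphi_n$ (which does hold for $H>\tfrac12$ since the coefficients are $O(n^{-1/2-H})$ and $\|\varphi_n\|_\infty$ is uniformly bounded by \eqref{phin1}, but should be said explicitly), and the nonvanishing endpoint limit $\varphi_{2k-1}(1)\to c_H\ne 0$, which in the paper is only a remark and rests on the eigenfunction expansion \eqref{phin1} that was derived in detail only for $H<\tfrac12$. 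The paper sidesteps $\varphi_n(1)$ entirely: from $\eps u_\eps=\widetilde K(u_0-u_\eps)$ it writes $u_\eps(1)=\eps^{-1}\int_0^1\bigl(u_0(x)-u_\eps(x)\bigr)c_H(1-x)^{2H-2}dx$, and then invokes Lemma~\ref{lem67} (with $\beta=2-2H$) for the scalar products $\int_0^1 x^{-\beta}\varphi_n\,dx\sim\nu_n^{\beta-1}$. Both chains collapse to the same sum $\sum_k k^{-1/2-H}/(\eps+k^{1-2H})\asymp\eps^{-1/2}$. Your version is more direct and reveals that the two ranges around $k_0$ contribute at the same scale; the paper's version buys a cleaner reliance on a lemma it actually proves. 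Either way, to match the paper's convention $\sim$ (ratio tending to a nonzero constant, not merely $\asymp$), the final step should be done via the integral comparison and a rescaling, as the paper does, to produce the explicit limiting constant $\int_0^\infty y^{H-3/2}/(y^{2H-1}+1)\,dy$.
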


The proof, deferred to Section \ref{sec-7}, relies on the asymptotics from Theorem \ref{main-thm-fbn}.

\subsubsection{Optimal linear filtering}

The optimal filtering problem deals with estimation of signals from noisy observations. The standard linear setup (see, e.g., \cite{LS1,LS2}) in continuous time consists of the
signal process $X$ and the observation process $Y$, generated by the equation
$$
Y_t = a \int_0^t  X_s ds + B_t, \quad t\in [0,T]
$$
where $B$ is a Brownian motion, independent of $X$ and $a$ is a fixed {\em gain} constant. In engineering literature this setup is known as the {\em additive Gaussian white noise} model.

The objective is to compute the optimal in the mean square sense estimator of $X_t$, given the past 
observations $Y_{[0,t]} = \{Y_s, s\in [0,t]\}$ for each time $t\in [0,T]$.
This amounts to calculation of the conditional expectation $\widehat X_t := \E(X_t|\F^Y_t)$, where $\F^Y_t$ is the filtration generated by $Y$.
The other important objective is to  compute the corresponding optimal filtering  error $P_t :=\E (X_t -\widehat X_t)^2$ and its steady state limit
$P_\infty :=\lim_{t\to\infty}P_t$, if it exists.

For a Gaussian process $X$, a standard calculation yields the formula
$$
\widehat X_t = \frac 1 a \int_0^t g(s,t) dY_s
$$
where $g(s,t)$ is the solution of the Wiener-Hopf integral equation
\begin{equation}\label{WH}
g(s,t) + \int_0^t g(r,t) a^2 R(r,s)dr = a^2 R(s,t), \quad 0\le s\le t\le T,
\end{equation}
and the estimation error is given by
$$
 P_t =\frac 1 {a^2}\left( R(t,t) - \int_0^t g(s,t)R(s,t)ds\right) = \frac 1 {a^2}g(t,t).
$$

When $X$ is a Gauss--Markov process, the equation \eqref{WH} can be reduced to the Riccati differential equation and in this case the optimal estimator $\widehat X_t$
and the corresponding error $P_t$ satisfy the celebrated Kalman--Bucy filtering equations.  In particular, both the  error $P_t$, $t\in [0,T]$ and its steady
state limit $P_\infty$ can be computed in closed forms.
When the signal process $X$ is not Markov, no such convenient way of computing $P_\infty$ is available.
If the kernel $R(s,t)$ belongs to $L^2[0,T]^2$, the unique solution of the equation \eqref{WH} can be expanded into series of its eigenfunctions and
much information about the filtering problem can be extracted from their asymptotic behavior.

Let us now revisit this filtering problem for the fBm, that is, with $X:=B^H$.
The covariance function \eqref{RfBm} satisfies the scaling property
$$
R(xT,yT) = T^{2H} R(x,y), \quad x,y\in [0,1]
$$
and, if we define small parameter $\eps:= a^{-2} T^{-2H-1}$ and set $u_\eps(x):=T g(xT,T)$, the equation \eqref{WH} with $t:=T$
gives the singularly perturbed equation \eqref{ie2nd} with operator $K$ from \eqref{KfBm} and the forcing $f(x)= R(x,1)$:
$$
\eps u_\eps(x) + \int_0^1 u_\eps(y)  R(y,x)dy  =     R(x,1), \quad x\in [0,1].
$$
Note that unlike in the problem discussed in the previous subsection,
the limit equation in this case does not have classical solution.

\begin{prop}\label{prop-filt}
The steady state filtering error is given by
$$
\lim_{T\to\infty }P_T = \frac { \big(\sin (\pi H)\Gamma(2H+1)\big)^{\frac 1 {2H+1}} }{\sin \frac {\pi}{2H+1}} a^{-\frac{4H}{2H+1}}.
$$
\end{prop}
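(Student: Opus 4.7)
The plan is to reduce the proposition, via the scaling that defines $u_\eps$, to the asymptotics of $u_\eps(1)$ as $\eps\to 0$, and then to evaluate that limit by inserting the spectral data of Theorem~\ref{main-thm-fbm} into a resolvent expansion.

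\textbf{Step 1 (scaling reduction).} From $P_T = g(T,T)/a^2$ and the definitions $u_\eps(x)=T g(xT,T)$, $\eps=a^{-2}T^{-2H-1}$, one has $T=(a^2\eps)^{-1/(2H+1)}$ and
\[
P_T = \frac{u_\eps(1)}{a^2 T} = \eps^{1/(2H+1)}\, a^{-4H/(2H+1)}\, u_\eps(1).
\]
Thus the proposition is equivalent to
\[
\lim_{\eps\to 0}\eps^{1/(2H+1)} u_\eps(1) = \frac{c^{1/(2H+1)}}{\sin\frac{\pi}{2H+1}},\qquad c:=\sin(\pi H)\Gamma(2H+1).
\]

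\textbf{Step 2 (spectral representation).} Diagonalising $K$ in its eigenbasis $\{\varphi_n\}$ and observing, via the symmetry $R(x,1)=R(1,x)$, that $\langle R(\cdot,1),\varphi_n\rangle = (K\varphi_n)(1) = \lambda_n\varphi_n(1)$, I would write
\[
u_\eps = \sum_{n\ge 1}\frac{\lambda_n\varphi_n(1)}{\eps+\lambda_n}\,\varphi_n,\qquad u_\eps(1) = \sum_{n\ge 1}\frac{\lambda_n\varphi_n(1)^2}{\eps+\lambda_n}.
\]
Pointwise validity at $x=1$ is ensured by the continuity of $u_\eps$ (read off from $\eps u_\eps = R(\cdot,1)-Ku_\eps$ and the smoothing action of $K$), together with absolute convergence of the series for each fixed $\eps>0$: the terms are dominated by $\|\varphi_n\|_\infty^2\lambda_n/\eps$ and $\sum_n\lambda_n<\infty$.

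\textbf{Step 3 (asymptotic evaluation).} Inserting $\lambda_n = c\nu_n^{-2H-1}$ and $\varphi_n(1)^2 = (2H+1)(1+O(n^{-1}))$ from Theorem~\ref{main-thm-fbm}, split
\[
u_\eps(1) = (2H+1)\sum_{n\ge 1}\frac{\lambda_n}{\eps+\lambda_n} + E_\eps .
\]
Using $\lambda_n/(\eps+\lambda_n)\le \min(1,\lambda_n/\eps)$ and cutting the error sum at the transition index $n_\eps\asymp\eps^{-1/(2H+1)}$ gives $|E_\eps|\lesssim\log(1/\eps)$, which is negligible at the final scale $\eps^{-1/(2H+1)}$. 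For the main term, set $\alpha=(\eps/c)^{1/(2H+1)}$ and use $\nu_n = n\pi+O(1)$ from \eqref{nun}; a standard Riemann-sum argument then yields
\[
\alpha\sum_{n\ge 1}\frac{1}{1+(\alpha\nu_n)^{2H+1}}\xrightarrow[\eps\to 0]{}\frac{1}{\pi}\int_0^\infty\frac{dy}{1+y^{2H+1}} = \frac{1}{(2H+1)\sin\frac{\pi}{2H+1}},
\]
the last equality being the classical beta integral. Multiplying by $(2H+1)/\alpha = (2H+1)(c/\eps)^{1/(2H+1)}$ and combining with the $E_\eps$-estimate gives the stated limit.

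\textbf{Main obstacle.} The technical crux is Step~3: one must show that neither the $O(n^{-1})$ discrepancy between $\varphi_n(1)^2$ and its limit $2H+1$, nor the $O(1)$ perturbation of $\nu_n$ away from $n\pi$, nor the Riemann-sum discretisation, contributes at the leading order $\eps^{-1/(2H+1)}$. What makes this work is the concentration of the sum near $n\simeq n_\eps$, together with the polynomial decay $\lambda_n\asymp n^{-2H-1}$ that renders the tail $n\gg n_\eps$ uniformly summable and the head $n\ll n_\eps$ logarithmically small after multiplication by $O(n^{-1})$; no information beyond Theorem~\ref{main-thm-fbm} is needed.
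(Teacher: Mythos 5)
Your proposal is correct and follows essentially the same route as the paper: scaling $P_T$ to $\eps^{1/(2H+1)}a^{-4H/(2H+1)}u_\eps(1)$, expanding $u_\eps(1)=\sum_n\lambda_n\varphi_n(1)^2/(\eps+\lambda_n)$, substituting $\varphi_n(1)^2\simeq 2H+1$ and $\lambda_n\simeq C(\pi n)^{-2H-1}$ from Theorem~\ref{main-thm-fbm}, passing to the integral $\int_0^\infty dy/(1+y^{2H+1})=\pi/\big((2H+1)\sin\frac{\pi}{2H+1}\big)$. The only difference is that you make explicit the $O(\log(1/\eps))$ error bound and the Riemann-sum justification, which the paper compresses into a pair of $\simeq$ signs.
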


\begin{rem}
Curiously, the worst steady state error at $a=1$ is obtained for $H=\frac 2 3$.
\end{rem}

\goodbreak

\section{The proof outline}\label{sec-strat}

As mentioned in Introduction, the eigenproblems for processes related to the standard Brownian motion are often solved by reduction to
boundary value problems for differential operators. For the fBm such reduction does not seem to be possible and we will take a
different route,  based on analytic properties of the Laplace transform
\begin{equation}
\label{LT}
\widehat \varphi(z) = \int_0^1 \varphi(x)e^{-zx}dx, \quad z\in \mathbb{C}.
\end{equation}

In a nutshell, the idea is to consider the Laplace transform $\widehat \varphi(z)$
as an analytic function on the complex plane $\mathbb{C}$. Using the particular structure of the eigenproblem,
an expression for $\widehat \varphi(z)$ with singularities is derived and their removal produces an
alternative characterization for the eigenvalues and eigenfunctions.
The proof is inspired by the approach in \cite{Ukai} to asymptotic approximation of the eigenvalues for
weakly singular integral operators.

As will become clear from the proof, the method, in principle, applies to operators with the difference kernels satisfying   
$$
K(|x-y|) = \int_0^\infty \kappa(t) e^{-t|x-y|}dt,
$$
with some function $\kappa(t)$, and their compositions with the integration operator.  However, the implementation of the method in 
each particular situation turns out to be very specific to the fine structure of the kernel under consideration and, in our experience, 
often requires different tricks and leads to entirely unexpected outcomes.  

\medskip 

\goodbreak

Let us sketch the main steps of the proof.

\medskip
\noindent
{\em 1) The Laplace transform.} For both eigenproblems considered in this paper, it is possible to find an expression for
the Laplace transform \eqref{LT}, whose main ingredient has the form
\begin{equation}\label{hatphi}
\frac 1{\Lambda(z)}\Big(e^{-z}\Phi_1(-z)+\Phi_0(z)\Big),\quad z\in \mathbb{C}
\end{equation}
where functions $\Phi_1(z)$ and $\Phi_0(z)$ are sectionally holomorphic on the cut plane $\mathbb{C}\setminus \Real_{\ge 0}$
and $\Lambda(z)$ is an explicit function (see Lemmas \ref{lem-main}, \ref{lem510} and \ref{lem61} below).
Such a representation is tailored to the particular structure of the operator and
is constructed on the case to case basis. The function $\Lambda(z)$ has a jump discontinuity on the real line
and a pair of purely imaginary zeros at $\pm i \nu$, where $\nu\in\Real_{>0}$ is related to $\lambda$ through an explicit
formula (see \eqref{nufla} and \eqref{nuflan} below).

\medskip
\noindent
{\em 2) Removal of singularities.} Since, a priori, $\widehat \varphi(z)$ is an entire function, 
both singularities must be removable. Removal of the poles in \eqref{hatphi} gives the algebraic conditions
\begin{equation}
\label{algPhi}
e^{\mp i\nu }\Phi_1(\mp i\nu)+\Phi_0(\pm i\nu)=0.
\end{equation}
Removing the discontinuity on the real line produces the boundary conditions
\begin{equation}\label{bindPhi}
\begin{aligned}
&
\Phi_0^+(t) - e^{2i\theta(t)}\Phi_0^-(t) = 2i  e^{-t} e^{i\theta(t)}\sin\theta(t) \Phi_1(-t)
\\
&
\Phi_1^+(t) - e^{2i\theta(t)}\Phi_1^-(t) = 2i e^{-t}  e^{i\theta(t)}\sin\theta(t) \Phi_0(-t)
\end{aligned}
\qquad
t\in \Real_{>0}
\end{equation}
which bind together the limits of $\Phi_0(z)$ and $\Phi_1(z)$ across the positive real semiaxis $\Real_{>0}$.
Here $\theta(t)$ is the argument of the limit $\Lambda^+(t):=\lim_{z\to t^+}\Lambda(z)$.

Therefore the eigenproblem \eqref{eigpr}  reduces to finding a pair of
functions $\Phi_0(z)$ and $\Phi_1(z)$, sectionally holomorphic  on the cut plane
$\mathbb{C}\setminus \Real_{\ge 0}$ satisfying the  boundary conditions \eqref{bindPhi} and the
algebraic constraint \eqref{algPhi}. Indeed, any such pair and a number $\nu>0$ can be used to
construct a solution $(\lambda,\varphi)$ to the eigenproblem by inverting the Laplace transform.
The next step is to find an equivalent formulation of this problem, using the techniques of solving 
boundary value problems on the complex plane.

\medskip
\noindent
{\em 3)\label{punkt3} An equivalent formulation of the eigenproblem.} The conditions \eqref{algPhi} and \eqref{bindPhi} can be rewritten as
an integro-algebraic system of equations as follows.

\medskip

{\em a)} The homogeneous Riemann boundary value problem is solved to find a nonvanishing function $X(z)$,
which is sectionally holomorphic on $\mathbb{C}\setminus \Real_{\ge 0}$  and satisfies
$$
\frac{X^+(t)}{X^-(t)}= e^{2i\theta(t)}, \quad t\in \Real_{>0}.
$$
The solution is not unique and is fixed to match the particular properties of $\theta(t)$ and a priori
estimates on $\Phi_0(z)$ and $\Phi_1(z)$ in each eigenproblem.

\medskip

{\em b)} The boundary conditions \eqref{bindPhi} can now be put into the decoupled form
\begin{equation}
\label{SDpm}
\begin{aligned}
&
S^+(t)\, -\, S^-(t)  =\phantom{+} 2ih_0(t/\nu)e^{-t} S(-t)
\\
&
D^+(t) - D^-(t) = - 2i h_0(t/\nu )e^{-t} D(-t)
\end{aligned}
\end{equation}
where we defined
\begin{equation}\label{SDPhi01}
\begin{aligned}
&
S(z)\, := \frac{\Phi_0(z)+\Phi_1(z)}{2X(z)} \\
&
D(z):= \frac{\Phi_0(z)-\Phi_1(z)}{2X(z)}
\end{aligned}
\end{equation}
and the function
$$
h_0(t):=e^{i\theta(\nu t)}\sin\theta(\nu t) \dfrac{X(-\nu t)}{X^+(\nu t)}.
$$
This function turns out to be real valued and independent of $\nu$.

Applying the Sokhotski--Plemelj formula (see \S 43 in \cite{Gahov}),  conditions \eqref{SDpm} can be further rewritten as
\begin{equation}
\label{SDsol}
\begin{aligned}
& S(z) = \phantom{+} \frac 1 {\pi } \int_0^\infty \frac{h_0(t/\nu)e^{-t} }{t-z}S(-t) dt+P_1(z)\\
& D(z) = -\frac 1 \pi \int_0^\infty  \frac{h_0(t/\nu)e^{-t}}{t-z} D(-t)dt + P_2(z)
\end{aligned}
\end{equation}
where $P_1(z)$ and $P_2(z)$ are arbitrary polynomials. Their degrees are chosen to match
the a priori growth of $S(z)$ and $D(z)$ at infinity: $S(z)-P_1(z)\to 0$ and $D(z)-P_2(z)\to 0$ as $z\to \infty$.
The coefficients of these polynomials  are chosen later using the conditions \eqref{algPhi}, which read
\begin{equation}
\label{ImRe}
\Im \Big\{e^{i\nu/2}X(i\nu)  D(i\nu)\Big\}   =\Re\Big\{e^{i\nu/2}X(i\nu)S(i\nu)\Big\}=0.
\end{equation}

At this point an equivalent characterization of the eigenvalues and eigenfunctions is obtained:
any nontrivial solution $(\lambda, \varphi)$ of the eigenproblem \eqref{eigpr} defines functions $S(z)$ and $D(z)$ and a positive constant $\nu$, which satisfy equations \eqref{SDsol} and \eqref{ImRe} and vice versa.

\medskip
\noindent
{\em 4) The integro-algebraic system and its properties.}
By setting $z:=-t$ with $t\in \Real_{>0}$ in \eqref{SDsol},
a pair of integral equations for $S(-t)$ and $D(-t)$ is obtained
\begin{equation}
\label{SDeqreal}
\begin{aligned}
& S(-t) = \phantom{+} \frac 1 {\pi } \int_0^\infty \frac{h_0(s/\nu)e^{-s} }{s+t}S(-s) ds + P_1(-t)\\
& D(-t) = -\frac 1 \pi \int_0^\infty  \frac{h_0(s/\nu)e^{-s}}{s+t} D(-s)ds +P_2(-t)
\end{aligned}\qquad t\in \Real_{>0}
\end{equation}
which together with \eqref{ImRe} and \eqref{SDsol} form an integro-algebraic system of equations. 

By construction, both functions $S-P_1$ and $D-P_2$ are square integrable. 
On the other hand, the integral equations \eqref{SDeqreal} have unique solutions with such property,  
since the operator on the right hand side is contracting on $L_2(0,\infty)$ for all sufficiently large $\nu >0$. 
Therefore, nontrivial solutions $(D,S,\nu)$ to the integro-algebraic system are at one-to-one correspondence 
with the solutions $(\lambda, \varphi)$ to the eigenproblem.

\medskip
\noindent
{\em 5) Inversion of the Laplace transform.}  The eigenfunctions are recovered by inverting the Laplace transform:
\begin{multline}
\label{varphi}
\varphi(x)  =  \frac 1 { \pi } \int_0^{\infty}    \frac{ \sin \theta(t)}{|\Lambda^+(t)|} \left(e^{-t(1-x)}\Phi_1(-t)+e^{-tx} \Phi_0(-t)\right)dt \\
-\Res\left\{e^{zx} \frac{\Phi_0(z)}{\Lambda(z)},i\nu\right\}
-\Res\left\{e^{zx} \frac{\Phi_0(z)}{\Lambda(z)},-i\nu\right\}.
\end{multline}
Note that $\varphi(x)$ is expressed here in terms of the solutions of integral equations \eqref{SDeqreal},
which determine $\Phi_1(z)$ and $\Phi_0(z)$ through \eqref{SDPhi01}

\medskip
\noindent
{\em 6) Asymptotic analysis.}
Enumerating solutions of the integro-algebraic system in a certain convenient way,
it is possible to find asymptotic approximation for their algebraic part $\nu>0$, which is precise up to the second order.
The announced asymptotics of the eigenvalues is then derived from the explicit relation between $\nu$ and $\lambda$.
Plugging it back into the system and using suitable estimates for the norm of the operator in the integral equations
\eqref{SDeqreal}, the eigenfunctions asymptotics is extracted from the formula \eqref{varphi}.

\medskip
\noindent
{\em 7) Enumeration alignment.} The enumeration chosen in the previous step may differ from the ``natural'' enumeration,
which puts the eigenvalues into decreasing order. This does not affect the leading term asymptotics, but may change the
second order term. Alignment of the two enumerations is done through a calibration procedure, based on continuity of
the spectrum. This delicate part of the proof is also carried out differently in the two eigenproblems
under consideration.

\goodbreak

\section{Proof of Theorem \ref{main-thm-fbm}}

In this section we implement the above program for the eigenproblem with the fBm covariance operator \eqref{KfBm}:
\begin{equation}
\label{eig}
\int_0^1 \frac 1 2 \big(t^{2H}+s^{2H}-|t-s|^{2H}\big) \varphi(s)ds = \lambda \varphi(t), \quad t\in [0,1].
\end{equation}
It will be convenient to work with the parameter $\alpha := 2-2H\in (0,2)$ and to address the case $\alpha<1$ first. The complementary case $\alpha>1$
can be treated along the same lines with  appropriate adjustments (see Section \ref{sec-case2}).

\subsection{The case $\alpha<1$}

\subsubsection{The Laplace transform} The following lemma gives a useful representation formula for the
Laplace transform \eqref{LT} (c.f. \eqref{hatphi}):

\begin{lem}\label{lem-main}
For  $\alpha \in (0,1)$,
\begin{equation}
\label{main}
\widehat \varphi(z) - \widehat \varphi(0) =-\frac 1{\Lambda(z)}\Big(e^{-z}\Phi_1(-z)+\Phi_0(z)\Big),\quad z\in \mathbb{C}
\end{equation}
where the functions $\Phi_0(z)$ and $\Phi_1(z)$, defined in \eqref{Phi0Phi1} below,  are sectionally holomorphic  on $\mathbb{C}\setminus \Real_{\ge 0}$ and
\begin{equation}
\label{Lmbd1}
\Lambda(z) :=
\frac{\lambda \Gamma (\alpha)} {c_\alpha} z + \frac 1 z  \int_0^\infty  \frac { 2 t^{\alpha }}{t^2-z^2} dt,
\end{equation}
with $c_\alpha : = (1-\frac\alpha 2)(1-\alpha)$.
\end{lem}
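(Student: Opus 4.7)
The plan is to apply the Laplace transform to both sides of the eigenproblem \eqref{eig}, manipulate the right-hand side into a form that isolates the $\widehat\varphi(z)$ contribution with coefficient $\Lambda(z)$, and to read off the remaining pieces as the sectionally holomorphic functions $\Phi_0$ and $e^{-z}\Phi_1(-z)$.

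First I would write
$$\lambda\widehat\varphi(z)=\int_0^1\varphi(s)\int_0^1 R(s,t)e^{-zt}\,dt\,ds$$
and evaluate the inner $t$-integral by integrating by parts twice. The surface terms at $t=0,1$ produce explicit $1/z$ and $1/z^2$ contributions involving $R(s,0),R(s,1),\partial_t R(s,0),\partial_t R(s,1)$, each of which is an explicit power-function of $s$. The bulk term is $z^{-2}\int_0^1\partial_t^2R(s,t)e^{-zt}\,dt$, where, for $t\neq s$,
$$\partial_t^2 R(s,t)=c_\alpha\bigl(t^{-\alpha}-|t-s|^{-\alpha}\bigr).$$
Both powers are integrable since $\alpha\in(0,1)$, so splitting $[0,1]$ at $t=s$ is harmless, and the intermediate boundary terms at $t=s$ cancel because $\partial_t R(s,\cdot)$ is continuous there.

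Next I would apply the Laplace representation $|x|^{-\alpha}=\Gamma(\alpha)^{-1}\int_0^\infty u^{\alpha-1}e^{-u|x|}\,du$ to each of the two powers, swap the order of integration, and evaluate the inner $t$-integral explicitly. Splitting at $t=s$ gives
$$\int_0^1 e^{-zt}e^{-u|t-s|}\,dt=\frac{2ue^{-zs}}{u^2-z^2}-\frac{e^{-us}}{u-z}-e^{-z}\frac{e^{-u(1-s)}}{u+z}.$$
Integrating this against $\varphi(s)$ produces three contributions in $z$: a multiple of $\widehat\varphi(z)\int_0^\infty 2u^\alpha/(u^2-z^2)\,du$, a Cauchy-type integral of $u^{\alpha-1}\widehat\varphi(u)$ over $\Real_{>0}$, and the analogous $e^{-z}$-weighted Cauchy integral of $u^{\alpha-1}e^{-u}\widehat\varphi(-u)$; from the $t^{-\alpha}$ piece one obtains a separate $\widehat\varphi(0)$ contribution.

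Collecting the $\widehat\varphi(z)$ terms on the left and rescaling by $z^2\Gamma(\alpha)/c_\alpha$ identifies the coefficient precisely as $z\Lambda(z)$ with $\Lambda$ as in \eqref{Lmbd1}, so dividing by $z$ gives $\Lambda(z)\widehat\varphi(z)$. The remaining terms split into two natural groups: one group assembles the Cauchy integral $\int_0^\infty u^{\alpha-1}\widehat\varphi(u)/(u-z)\,du$ together with the polynomial-in-$z$ surface contributions from the $t=0$ boundary; this defines $\Phi_0(z)$. The second group is multiplied by $e^{-z}$ and has the same structure with $(u+z)^{-1}$, $e^{-u}\widehat\varphi(-u)$ and surface terms from $t=1$; after the substitution $z\mapsto -z$ in its argument this defines $\Phi_1(-z)$. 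The $\widehat\varphi(0)$ contribution fits together with the $\widehat\varphi(0)$-multiple of $\Lambda(z)$ that produces the $-\widehat\varphi(0)$ on the left-hand side of \eqref{main}. Sectional holomorphy of $\Phi_0,\Phi_1$ on $\mathbb{C}\setminus\Real_{\ge 0}$ is immediate from the Cauchy-integral representation via Sokhotski--Plemelj.

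The main obstacle is bookkeeping rather than principle: one must carefully track signs, the $\Gamma(\alpha)$ and $c_\alpha$ constants, and the way the many Cauchy and polynomial pieces regroup so that the $\widehat\varphi(0)$-independent part of $z\mapsto\Lambda(z)(\widehat\varphi(z)-\widehat\varphi(0))$ splits cleanly into $\Phi_0(z)$ and $e^{-z}\Phi_1(-z)$. Secondary technicalities are the justification of the double integration by parts despite the integrable singularity of $\partial_t^2 R(s,\cdot)$ at $t=s$, which is handled by the $t=s$-splitting, and the convergence at infinity of $\int_0^\infty 2u^\alpha/(u^2-z^2)\,du$, which is guaranteed by $\alpha<1$ (with the integral read as a Cauchy principal value for real positive $z$).
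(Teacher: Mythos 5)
Your plan is correct, but it takes a genuinely different route from the paper's. The paper does not Laplace-transform the eigenvalue equation directly in the $t$-variable of the kernel; it first passes to the antiderivative $\psi(x)=\int_x^1\varphi(y)\,dy$, obtaining the generalized eigenproblem $c_\alpha\int_0^1|x-y|^{-\alpha}\psi(y)\,dy=-\lambda\psi''(x)$ with the boundary conditions $\psi(1)=0$, $\psi'(0)=-\varphi(0)=0$ built in, and then introduces $u(x,t)=\int_0^1 e^{-t|x-y|}\psi(y)\,dy$, which satisfies the ODE $u''=t^2u-2t\psi$ in $x$ with Robin boundary conditions; it is this ODE that gets Laplace-transformed. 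That detour buys two things which your direct route sacrifices: (i) the boundary data that end up inside $\Phi_0,\Phi_1$, namely $\psi(0)=\widehat\varphi(0)$, $\psi'(1)=-\varphi(1)$, $u(0,t)=\widehat\psi(t)$, $u(1,t)=e^{-t}\widehat\psi(-t)$, are manifestly finite, whereas your surface terms involve $\varphi'(0)$ and $\varphi'(1)$, whose existence must be noted separately (it does hold for $\alpha<1$ since $\partial_t R$ is H\"older with exponent $1-\alpha$); (ii) every contribution arrives at once in the $\frac{t^{\alpha-1}}{t-z}$ Cauchy-kernel form, so no spurious singularities appear. Your route produces an apparent simple pole at $z=0$ in each of $\Phi_0(z)$ and $\Phi_1(z)$ that must be shown to cancel — for $\Phi_0$ the $1/z$-coefficient is $-\frac{\Gamma(\alpha)}{c_\alpha}\lambda\varphi'(0)+\int_0^\infty u^{\alpha-2}\bigl(\widehat\varphi(0)-\widehat\varphi(u)\bigr)\,du$, which does vanish by the identities $\varphi'(0)=\lambda^{-1}(1-\tfrac\alpha2)\int_0^1 s^{1-\alpha}\varphi(s)\,ds$, $\int_0^\infty u^{\alpha-2}(1-e^{-us})\,du=-\Gamma(\alpha-1)s^{1-\alpha}$ and $\Gamma(\alpha)/c_\alpha=-\Gamma(\alpha-1)/(1-\tfrac\alpha2)$, and an analogous cancellation handles $\Phi_1$ — and after using $\frac{1}{z(u-z)}=\frac{1}{u}\bigl(\frac{1}{z}+\frac{1}{u-z}\bigr)$ to absorb the extra factor of $z^{-1}$, your $\Phi_0,\Phi_1$ reduce to exactly the paper's \eqref{Phi0Phi1}. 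So your approach is a sound, more direct but bookkeeping-heavier alternative leading to the same definitions.
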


\begin{proof}
Let us define $\displaystyle\psi(x):=\int_x^1 \varphi(y)dy$ and note that $\psi(1)=0$, $\psi'(0)=-\varphi(0)=0$ and  $\psi(0)=\widehat \varphi(0)$.
Integration by parts in \eqref{eig} gives
\begin{equation}
\label{1diff}
\begin{aligned}
\lambda \psi'(x) 
&
=\int_0^1 R(y,x) \psi'(y)dy = -\int_0^1   \partial_y R(y,x)\psi(y) dy  \\
&
=-\int_0^1   (1-\tfrac \alpha 2)\left (y^{1-\alpha}+\sign(x-y) |x-y|^{1-\alpha}\right) \psi(y)dy,
\end{aligned}
\end{equation}
and differentiating with respect to $x$, we get
\begin{equation}
\label{interch}
\lambda \psi''(x) =
-\frac d{dx}\int_0^1   (1-\tfrac \alpha 2)   \sign(x-y) |x-y|^{1-\alpha}  \psi(y)dy=
-\int_0^1   c_\alpha   |x-y|^{-\alpha}  \psi(y)dy
\end{equation}
where interchanging integration and derivative is possible since $\alpha<1$ is assumed. Thus the problem \eqref{eig} is equivalent to
the generalized eigenproblem
\begin{equation}
\label{geig}
\begin{aligned}
\int_0^1  & c_\alpha  |x-y|^{-\alpha}  \psi(y)dy = -\lambda \psi''(x) \\
&
\psi(1)=0, \; \psi'(0)=0
\end{aligned}
\end{equation}
Using the identity
\begin{equation}
\label{idnt}
|x-y|^{-\alpha} = \frac 1 {\Gamma (\alpha)} \int_0^\infty t^{\alpha-1}e^{-t|x-y|}dt,\quad \alpha\in (0,1)
\end{equation}
the equation  \eqref{geig} can be rewritten as
$$
\frac {c_\alpha}{\Gamma (\alpha)}\int_0^\infty t^{\alpha-1} \int_0^1  e^{-t|x-y|} \psi(y) dy  dt = -\lambda \psi''(x).
$$
If we now define
$$
u(x,t) :=  \int_0^1  e^{-t|y-x|} \psi(y) dy  \quad \text{and}\quad
u_0(x):=  \int_0^\infty t^{\alpha-1}u(x,t) dt,
$$
we get
\begin{equation}
\label{u0}
\frac {c_\alpha}{\Gamma (\alpha)}u_0(x) = -\lambda \psi''(x).
\end{equation}

Differentiating $u(x,t)$ twice with respect to $x$ gives the equation
\begin{equation}
\label{uttag}
u''(x,t) = t^2 u(x,t) -2t \psi(x)
\end{equation}
subject to the boundary conditions
\begin{equation}
\label{bnccnd}
\begin{aligned}
& u'(0,t)= \phantom{+} tu(0,t) \\
& u'(1,t) = -t u(1,t).
\end{aligned}
\end{equation}
Let us compute the Laplace transform of both sides of \eqref{uttag}. To this end, we have
\begin{align*}
\widehat u''(z,t) & :=
\int_0^1 e^{-zx}u''(x,t)dx = 
e^{-z}u'(1,t)-u'(0,t) +z \Big(e^{-z} u(1,t)-u(0,t)+z \widehat u(z,t)\Big)\\
&
=e^{-z}\Big(u'(1,t) +z    u(1,t)\Big) -\Big(u'(0,t) +z u(0,t)\Big)+z^2 \widehat u(z,t).
\end{align*}
After plugging in the boundary conditions \eqref{bnccnd}, this becomes
$$
\widehat u''(z,t)  = e^{-z} (z -t )u(1,t) - (z+t)u(0,t)+z^2 \widehat u(z,t),
$$
which, along with \eqref{uttag}, gives
$$
(z^2-t^2)\widehat u(z,t)  =   u(0,t) (z+t)-e^{-z} u(1,t)(z-t)-2t \widehat \psi(z).
$$
Consequently, for all $z\in \mathbb{C}\setminus \Real$,
\begin{align*}
\widehat u_0(z) & =  \int_0^\infty t^{\alpha-1}\widehat u(z,t) dt   \\
&
=
\int_0^\infty  \frac {t^{\alpha-1}} {z -t } u(0,t)  dt
-e^{-z}\int_0^\infty\frac {t^{\alpha-1}} {z+t}  u(1,t) dt- \widehat \psi(z)\int_0^\infty  \frac { 2 t^{\alpha }}{z^2-t^2} dt.
\end{align*}
On the other hand,  by \eqref{u0} and the boundary conditions in \eqref{geig}
\begin{align*}
\widehat u_0(z) 
& = -\lambda\frac{ \Gamma (\alpha)}{c_\alpha} \widehat\psi''(z) =
-\lambda\frac{ \Gamma (\alpha)}{c_\alpha} \Big(e^{-z} \psi'(1) +z\widehat \psi'(z)\Big) \\
&
 =
-\lambda\frac{ \Gamma (\alpha)}{c_\alpha}
\left(
z^2 \widehat \psi(z)+\psi'(1)e^{-z} -z \psi(0)
\right).
\end{align*}
Combining the two expressions gives
\begin{align*}
&
\left( \lambda\frac{ \Gamma (\alpha)}{c_\alpha} z -\frac 1 z \int_0^\infty  \frac { 2 t^{\alpha }}{z^2-t^2} dt
\right)z\widehat \psi(z)
 = \\
& \lambda\frac{ \Gamma (\alpha)}{c_\alpha}
  \psi(0) z + \int_0^\infty  \frac {t^{\alpha-1}} {t-z } u(0,t)  dt
 -e^{-z}\lambda\frac{ \Gamma (\alpha)}{c_\alpha} \psi'(1) +e^{-z} \int_0^\infty\frac {t^{\alpha-1}} {t+z}  u(1,t) dt.
\end{align*}
Note that $z\widehat \psi(z)= \widehat \psi'(z)+\psi(0)=-\widehat \varphi(z)+\widehat \varphi(0)$ and thus \eqref{main} is obtained if we define $\Lambda(z)$ as in \eqref{Lmbd1} and
set
\begin{equation}\label{Phi0Phi1}
\begin{aligned}
& \Phi_0(z)  := \phantom{+}\lambda\frac{ \Gamma (\alpha)}{c_\alpha}  \psi(0) z + \int_0^\infty  \frac {t^{\alpha-1}} {t-z } u(0,t)  dt\\
& \Phi_1(z)  :=
-\lambda\frac{ \Gamma (\alpha)}{c_\alpha} \psi'(1)  +  \int_0^\infty\frac {t^{\alpha-1}} {t-z}  u(1,t) dt.
\end{aligned}
\end{equation}

\end{proof}

Several useful properties of $\Lambda(z)$ are summarized  in the following lemma

\begin{lem}\

\medskip
\noindent
a) The function $\Lambda(z)$, defined in \eqref{Lmbd1}, admits the explicit expression
\begin{equation}
\label{Lz}
\Lambda(z)  =
\frac{\lambda \Gamma (\alpha)} {c_\alpha} z + z^{\alpha-2}  \frac {\pi }{  \cos \frac {\pi } 2 \alpha } \begin{cases}
e^{\frac {1-\alpha}  2 \pi i     } &   \arg(z)\in (0,\pi) \\
e^{-\frac {1-\alpha}  2 \pi i     } &  \arg(z)\in (-\pi, 0)
\end{cases}
\end{equation}
which is discontinuous along the real axis and has two zeros at $\pm z_0 = \pm i \nu$ with
\begin{equation}
\label{nufla}
\nu^{\alpha-3} =  \frac{\lambda \Gamma (\alpha)} {c_\alpha}\frac{\cos \frac \pi 2 \alpha}{\pi}.
\end{equation}

\medskip
\noindent
b) The limits $\Lambda^{\pm} (t) := \lim_{z\to t^\pm}\Lambda(z)$ are given by the expressions 
\begin{equation}
\label{Lfla}
\Lambda^{\pm} (t)  =
\frac{\lambda \Gamma (\alpha)} {c_\alpha} t \pm  |t|^{\alpha-2}  \frac {\pi  }{  \cos \frac {\pi } 2 \alpha }  \begin{dcases}
e^{\frac {1\mp\alpha}2 \pi i   }  & t> 0 \\
e^{\frac {1\pm\alpha}2 \pi i   }  & t<0
\end{dcases}
\end{equation}
which admit the following symmetries
\begin{align}
 \Lambda^+(t) & =\overline{\Lambda^-(t)} \label{conjp}\\
 \frac{\Lambda^+(t)}{\Lambda^-(t)} & =\frac{\Lambda^-(-t)}{\Lambda^+(-t)}   \label{prop}   \\
 \big|\Lambda^+(t)\big| &  =\big|\Lambda^+(-t)\big|   \label{absL}
\end{align}

\medskip
\noindent 
c) The argument $\theta(t):=\arg\{\Lambda^+(t)\}\in (-\pi,\pi]$ satisfies $\theta(-t)=\pi-\theta(t)$ and
\begin{equation}\label{thetat}
\theta(t) =  \arctan \frac{(t/\nu)^{\alpha-3}\sin \frac{1-\alpha}{2}\pi }{1+(t/\nu)^{\alpha-3}\cos \frac {1-\alpha} 2\pi}, \quad t>0
\end{equation}
decreasing continuously from $\theta(0+):=\lim_{t\to 0+}\theta(t)=\frac{1-\alpha}{2}\pi>0$ to $\theta(\infty):=0$ as $t\to\infty$.
Moreover, 
\begin{equation}\label{thetaint}
\frac 1 \pi \int_0^\infty \theta(t) dt = \nu \frac{\sin \Big(\frac \pi {3-\alpha}\frac{1-\alpha}{2}\Big)}{\sin\frac \pi {3-\alpha}}=: \nu b_\alpha.
\end{equation}

\end{lem}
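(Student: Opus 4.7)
I begin by putting $\Lambda(z)$ into closed form.  Substituting $w=t^2$ in the integral gives $\int_0^\infty w^{(\alpha-1)/2}/(w-z^2)\,dw$; the Mellin identity $\int_0^\infty w^{s-1}/(w+a)\,dw=\pi a^{s-1}/\sin(\pi s)$ (valid for $0<s<1$ and $|\arg a|<\pi$), applied with $s=(\alpha+1)/2\in(1/2,1)$ and $a=-z^2$, then yields \eqref{Lz}; the two cases $\arg z\in(0,\pi)$ and $\arg z\in(-\pi,0)$ correspond to the two admissible branches of $(-z^2)^{(\alpha-1)/2}$.  Solving $\Lambda(z)=0$ in the upper half-plane reduces to $z^{3-\alpha}=\frac{c_\alpha\pi}{\lambda\Gamma(\alpha)\cos(\pi\alpha/2)}e^{i(3-\alpha)\pi/2}$, whose unique solution is $z=i\nu$ with $\nu$ as in \eqref{nufla}; the conjugate root gives $-i\nu$.

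For part (b), the expressions \eqref{Lfla} come directly from taking $z\to t^\pm$ in \eqref{Lz}: for $t<0$ the factor $z^{\alpha-2}$ picks up an extra rotation $e^{\pm i(\alpha-2)\pi}$ which combines with $e^{\pm i(1-\alpha)\pi/2}$ to yield $e^{\pm i(1+\alpha)\pi/2}$ modulo $2\pi$.  The three symmetries \eqref{conjp}--\eqref{absL} are then immediate from \eqref{Lfla}: $\Lambda^\pm$ are complex conjugates (they differ only by the sign in the singular phase), the modulus depends only on $|t|$, and \eqref{prop} follows by direct algebra.

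For (c), using \eqref{nufla} to absorb $\lambda\Gamma(\alpha)/c_\alpha$, I factor $\Lambda^+(t)=\frac{\pi\nu^{\alpha-3}t}{\cos(\pi\alpha/2)}\bigl(1+(t/\nu)^{\alpha-3}e^{i\phi}\bigr)$ for $t>0$, with $\phi:=(1-\alpha)\pi/2\in(0,\pi/2)$.  The prefactor is real positive, so $\theta(t)=\arg\{1+(t/\nu)^{\alpha-3}e^{i\phi}\}$, giving \eqref{thetat}.  Strict monotonicity and the endpoint values $\theta(0+)=\phi$, $\theta(\infty)=0$ follow because $(t/\nu)^{\alpha-3}$ decreases continuously from $+\infty$ to $0$ while $r\mapsto\arctan(r\sin\phi/(1+r\cos\phi))$ is increasing in $r\ge 0$.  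The reflection $\theta(-t)=\pi-\theta(t)$ comes from the analogous factorisation $\Lambda^+(-t)=-\overline{\Lambda^+(t)}$ for $t>0$, using the identity $-e^{i(1+\alpha)\pi/2}=e^{-i(1-\alpha)\pi/2}$.

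The main obstacle is the integral \eqref{thetaint}.  My plan is to write $\theta(t)=\Im\log\bigl(1+(t/\nu)^{\alpha-3}e^{i\phi}\bigr)$ and substitute $w=(t/\nu)^{\alpha-3}$; setting $\beta:=1/(3-\alpha)\in(1/3,1/2)$ this produces $\int_0^\infty\theta(t)\,dt=\beta\nu\,\Im\int_0^\infty\log(1+we^{i\phi})\,w^{-\beta-1}\,dw$.  Integration by parts in $w$ kills the logarithm (both boundary terms vanish, since $\beta\in(0,1)$ and $\log(1+we^{i\phi})\sim we^{i\phi}$ near $w=0$), leaving a Mellin integral of standard form.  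Applying the Mellin identity with $s=1-\beta$ and $a=e^{i\phi}$, and using $\sin(\pi(1-\beta))=\sin(\pi\beta)$, the imaginary part simplifies to $\pi\sin(\beta\phi)/(\beta\sin(\pi\beta))$; multiplying by $\beta\nu$ and substituting $\beta=1/(3-\alpha)$, $\phi=(1-\alpha)\pi/2$ reproduces \eqref{thetaint} after division by $\pi$.  The delicate points are verifying the range of applicability of the Mellin identity for complex $a$ of unit modulus and controlling the boundary contributions in the integration by parts.
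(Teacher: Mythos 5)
Your proof is correct, and the first two parts track the paper's (very terse) argument: the Mellin/beta identity $\int_0^\infty w^{s-1}/(w+a)\,dw=\pi a^{s-1}/\sin(\pi s)$ is exactly the packaged form of the contour integral the paper uses to derive \eqref{Lint} and hence \eqref{Lz}, and the limit formulas, symmetries, and the factorisation $\Lambda^+(t)\propto 1+(t/\nu)^{\alpha-3}e^{i\phi}$ are the "direct calculations" the paper alludes to. Where you genuinely diverge is \eqref{thetaint}: the paper (judging from its displayed intermediate step) substitutes inside the $\arctan$ form \eqref{thetat} and finishes by a tailored contour integration, whereas you first rewrite $\theta(t)=\Im\log\bigl(1+(t/\nu)^{\alpha-3}e^{i\phi}\bigr)$, change variables $w=(t/\nu)^{\alpha-3}$, integrate by parts to eliminate the logarithm (both boundary terms vanish because $\beta=1/(3-\alpha)\in(0,1)$ and $\log(1+we^{i\phi})=O(w)$ near $0$, $O(\log w)$ near $\infty$), and then apply the same Mellin identity once more. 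This is a cleaner route: it avoids manipulating the $\arctan$ and reuses a single standard identity. Note the range check is innocuous here: after writing $\frac{e^{i\phi}}{1+we^{i\phi}}=\frac{1}{w+e^{-i\phi}}$ the identity is applied with $a=e^{-i\phi}$ (not $e^{i\phi}$ as you wrote), which satisfies $|\arg a|=\phi<\pi/2<\pi$; the slip does not change $\Im\bigl\{\pi e^{i\beta\phi}/(\beta\sin\pi\beta)\bigr\}=\pi\sin(\beta\phi)/(\beta\sin\pi\beta)$, so the final constant agrees with \eqref{thetaint}.
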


\begin{proof}
The formula \eqref{Lz} follows from \eqref{Lmbd1} and the identity
\begin{equation}
\label{Lint}
\int_{0}^{\infty} \frac{t^\alpha}{t^2-z^2}dt  = z^{\alpha-1}\frac 1 2
\frac{\pi   }{ \cos \frac{\pi } 2\alpha}
\begin{cases}
e^{\frac{1-\alpha}{2}\pi i} & \arg (z) \in (0,\pi) \\
e^{-\frac{1-\alpha}{2}\pi i } &   \arg(z)\in (-\pi, 0)
\end{cases}
\end{equation}
which is obtained by integrating
the function $\xi \mapsto \frac{\xi^\alpha }{\xi^2-z^2}$ over the  circular
contour, cut along the negative real semiaxis. 
Properties \eqref{nufla}-\eqref{thetat} are deduced by direct calculations, using \eqref{Lz}.
The integral in \eqref{thetaint} reduces to 
$$
\int_0^\infty \theta(t) dt = \nu \Big(\sin \tfrac{1-\alpha}{2}\pi\Big)^{\frac 1{\alpha-3}} \int_0^\infty \frac{\tau^{\frac 1 {3-\alpha}}}{1+(\tau+\cot \tfrac{1-\alpha}{2}\pi)^2}d\tau
$$
through a change of variable and the claimed formula is obtained by an appropriate contour integration. 

\end{proof}

\subsubsection{Removal of singularities}

Since $\widehat \varphi(z)$ is a priori an entire function, all the singularities must be removable.
Note that $\Phi_i(-z_0)=\overline{\Phi_i(z_0)}$, $i=0,1$ and therefore
the numerator in \eqref{main}  vanishes at $\pm z_0=\pm i\nu$ if
\begin{equation}
\label{cond1}
e^{- z_0}\Phi_1(- z_0)+\Phi_0(z_0)=0.
\end{equation}
Removal of discontinuity on the real line gives the condition
\begin{equation}
\label{drem}
\lim_{z\to t^+}\frac 1{\Lambda(z)}\big(e^{-z}\Phi_1(-z)+\Phi_0(z)\big)  = \lim_{z\to t^-}\frac 1{\Lambda(z)}\big(e^{-z}\Phi_1(-z)+\Phi_0(z)\big).
\end{equation}
When $-z$ approaches $t\in \Real$ from below the real line, $z$ approaches it from above,
%
%
and therefore, since both $\Phi_0(z)$ and $\Phi_1(z)$ do not have singularities on the negative real semiaxis, the condition \eqref{drem} is equivalent to
$$
\begin{aligned}
& \frac 1{\Lambda^+(t)}\big(e^{-t}\Phi_1(-t)+\Phi_0^+(t)\big)  = \frac 1{\Lambda^-(t)}\big(e^{-t}\Phi_1(-t)+\Phi_0^-(t)\big),
\quad t>0   \\
& \frac 1{\Lambda^+(t)}\big(e^{-t}\Phi_1^-(-t)+\Phi_0(t)\big)  = \frac 1{\Lambda^-(t)}\big(e^{-t}\Phi_1^+(-t)+\Phi_0(t)\big), \quad   t<0
\end{aligned}
$$
In view of \eqref{prop} these two equations can be rewritten as
\begin{equation}
\label{Hp1}
\begin{aligned}
&
\Phi_0^+(t) - \frac{\Lambda^+(t)}{\Lambda^-(t)}\Phi_0^-(t) = e^{-t} \Phi_1(-t)\left(\frac{\Lambda^+(t)}{\Lambda^-(t)}-1\right)
\\
&
\Phi_1^+(t)-\frac{\Lambda^+(t)}{\Lambda^-(t)}\Phi_1^-(t) = e^{-t} \Phi_0(-t)\left(\frac{\Lambda^+(t)}{\Lambda^-(t)}-1\right)
\end{aligned}\quad t>0.
\end{equation}
Since  $\Lambda^+(t)$ and $\Lambda^-(t)$ are complex conjugates,   it follows that
$
\Lambda^+(t)/\Lambda^-(t)=e^{2i\theta(t)}
$
and
$$
\frac{\Lambda^+(t)}{\Lambda^-(t)}-1=e^{2i\theta(t)}-1 = 2i e^{i\theta(t)}\sin\theta(t).
$$
Therefore \eqref{Hp1} becomes
\begin{equation}
\label{Hp}
\begin{aligned}
&
\Phi_0^+(t) - e^{2i\theta(t)}\Phi_0^-(t) = 2i  e^{-t} e^{i\theta(t)}\sin\theta(t) \Phi_1(-t)
\\
&
\Phi_1^+(t) - e^{2i\theta(t)}\Phi_1^-(t) = 2i e^{-t}  e^{i\theta(t)}\sin\theta(t) \Phi_0(-t)
\end{aligned}\quad t>0.
\end{equation}

Further, since $t u(0,t)$ and $tu(1,t)$ are uniformly bounded, the formulas \eqref{Phi0Phi1} imply that for any $\alpha \in (0,1)$
\begin{equation}\label{grinf}
\Phi_0(z)   \sim z  \quad \text{and}\quad
\Phi_1(z)   \sim \text{const.}
\quad \text{as\ } z\to\infty
\end{equation}
and
\begin{equation}
\label{grzero}
|\Phi_0(z)| \sim  |z|^{\alpha-1}  \quad \text{and}\quad |\Phi_1(z)|\sim   |z|^{\alpha-1} \quad \text{as\ } z\to 0.
\end{equation}

\subsubsection{An equivalent formulation of the eigenproblem}

At this point the eigenproblem \eqref{eig} reduces to finding functions $\Phi_0(z)$ and $\Phi_1(z)$, which are
sectionally holomorphic on $\mathbb{C}\setminus \Real_{\ge 0}$ and satisfy the boundary conditions \eqref{Hp},
the constraint \eqref{cond1} and the growth conditions \eqref{grinf} and \eqref{grzero}.
Following the program outlined in Section \ref{sec-strat}, we will construct such functions as solutions of an
integro-algebraic system of equations. 

To this end,  define
\begin{equation}\label{X0z}
X_0(z) =\exp \left(\frac 1 {\pi} \int_0^\infty \frac{\theta_0(t)}{  t-  z}d  t\right), \quad z\in \mathbb{C}\setminus \Real_{\ge 0}
\end{equation}
where the function $\theta_0(t):=\theta(\nu t)$ does not depend on $\nu$, see \eqref{thetat}. For any fixed $\nu>0$
let $p_\pm(t)$ and $q_\pm(t)$  be solutions of the integral equations
\begin{equation}
\label{pq}
\begin{aligned}
& p_\pm(t  ) = \pm \frac 1 {\pi } \int_0^\infty \frac{h_0(s)e^{-\nu s} }{s +t }p_\pm (s) ds  + 1 \\
& q_{\pm }(t  ) = \pm \frac 1 {\pi } \int_0^\infty \frac{h_0(s)e^{-\nu s} }{s+t  }q_\pm (s) ds + t
\end{aligned}\quad\quad t>0
\end{equation}
where the function
$$
h_0(t) := e^{i\theta_0(t)}\sin \theta_0(t) \frac {X_0(-t)}{X_0^+(t)}
$$
is real valued, see \eqref{hsq} below. We will extend the definition of $q_\pm$ and $p_\pm$ to the cut plane, setting
$$
\begin{aligned}
& p_{\pm }(z) := \pm \frac 1 {\pi } \int_0^\infty \frac{h_0(s)e^{-\nu s} }{s+z}p_\pm (s) ds + 1 \\
& q_{\pm }(z) := \pm \frac 1 {\pi } \int_0^\infty \frac{h_0(s)e^{-\nu s} }{s+z}q_\pm (s) ds + z
\end{aligned}
\quad\quad  z\in \mathbb{C}\setminus \Real_{\le 0}
$$
Finally let us define the functions
\begin{equation}\label{abpm}
\begin{aligned}
& a_\pm(z) = p_+(z)\pm p_-(z) \\
& b_\pm(z) = q_+(z)\pm q_-(z)
\end{aligned}
\end{equation}
and the constants
\begin{equation}\label{xieta}
\begin{aligned}
&
\xi := e^{ i\nu/2} X_0(i)\big(b_+(-i)-b_\alpha a_+(-i) \big)+e^{-i\nu/2} X_0(-i) \big(b_-(i)-b_\alpha a_-(i)\big) \\
&
\eta:=  e^{i\nu/2} X_0(i) a_-(-i)+e^{-i\nu/2} X_0(-i) a_+(i)
\end{aligned}
\end{equation}
where $b_\alpha$ is defined in \eqref{thetaint}.

\medskip

The following lemma gives an equivalent formulation of the eigenproblem \eqref{eig}:

\begin{lem}\label{lemia}
Let $(p_\pm, q_\pm, \nu)$ with $\nu>0$ be a solution of the system,
which consists of the integral equations \eqref{pq} and the algebraic equation
\begin{equation}\label{alg}
\Im\{\xi \overline {\eta}\} = 0.
\end{equation}
Let $\varphi$ be defined by the Laplace transform \eqref{main}, where
\begin{equation}
\label{Phiz}
\begin{aligned}
& \Phi_0(z) :=  
  X_0(z/\nu)\Big( b_+(-z/\nu)-b_\alpha a_+(-z/\nu)  -  \frac{\xi}{\eta}   a_-(-z/\nu)\Big)
\\
& \Phi_1(z) := 
  X_0(z/\nu) \Big( b_-(-z/\nu)-b_\alpha a_-(-z/\nu) -  \frac{\xi}{\eta}   a_+(-z/\nu)\Big)\end{aligned}
\end{equation}
and $\lambda$ be defined by \eqref{nufla}. Then the pair $(\lambda, \varphi)$ solves the eigenproblem \eqref{eig}.
Conversely, any pair $(\lambda, \varphi)$ satisfying \eqref{eig} defines a solution of the above integro-algebraic system.
\end{lem}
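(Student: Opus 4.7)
The plan is to derive the integro-algebraic system \eqref{pq}-\eqref{alg} as equivalent to the characterization already obtained: find $\Phi_0, \Phi_1$ sectionally holomorphic on $\mathbb{C}\setminus \Real_{\ge 0}$, satisfying the jump conditions \eqref{Hp}, the algebraic constraint \eqref{cond1}, and the growth estimates \eqref{grinf}-\eqref{grzero}, following the general program of Section \ref{sec-strat}. First I would verify that $X(z):=X_0(z/\nu)$, with $X_0$ given by \eqref{X0z}, solves the homogeneous Riemann problem $X^+(t)/X^-(t)=e^{2i\theta(t)}$ on $\Real_{>0}$; this is a direct Sokhotski--Plemelj computation using $\theta_0(t/\nu)=\theta(t)$ and the integrability of $\theta$, and it yields a nonvanishing sectionally holomorphic factor with $X(z)\to 1$ at infinity and power behavior at the origin compatible with \eqref{grzero}. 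A short calculation also gives the second-order asymptotic $X(z)=1-\nu b_\alpha/z+O(z^{-2})$ as $z\to\infty$ via \eqref{thetaint}; this constant $b_\alpha$ will reappear in the matching below.

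Next I would introduce the decoupling variables $S(z):=(\Phi_0(z)+\Phi_1(z))/(2X(z))$ and $D(z):=(\Phi_0(z)-\Phi_1(z))/(2X(z))$. Summing and subtracting \eqref{Hp} and dividing by $X$ diagonalizes the jump conditions into the form \eqref{SDpm}, and applying the Sokhotski--Plemelj formula to each produces the representation \eqref{SDsol} with polynomial inhomogeneities $P_1, P_2$. The growth \eqref{grinf} combined with $X(z)\to 1$ forces $P_1, P_2$ to be linear with identical leading coefficient, so $P_1-P_2$ is a constant. Substituting $z=-\nu t$ with $t>0$ and rescaling the integration variable reduces each equation to the form \eqref{pq} driven by an affine function of $t$. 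Because the operators in \eqref{pq} are strict contractions on $L^2(\Real_{>0})$ for each $\nu>0$, unique real-valued solutions $p_\pm, q_\pm$ exist. Linearity, together with the $-b_\alpha$ shift absorbed from the next-order asymptotic of $X(z)$ of Step 1 into the constant term of $P_1, P_2$, then yields
\[
S(-\nu t)=q_+(t)-\bigl(b_\alpha+\tfrac{\xi}{\eta}\bigr)p_+(t), \qquad D(-\nu t)=q_-(t)+\bigl(\tfrac{\xi}{\eta}-b_\alpha\bigr)p_-(t),
\]
with a single remaining complex parameter $\xi/\eta$. Reassembling $\Phi_0=X(S+D)$ and $\Phi_1=X(S-D)$ produces precisely \eqref{Phiz}.

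Finally, I would impose the algebraic constraint \eqref{cond1}, which after the substitution \eqref{SDPhi01} is equivalent to the pair \eqref{ImRe}. Substituting the parameterization from the previous step and evaluating at $z=\pm i\nu$, using $p_\pm(\pm i), q_\pm(\pm i)$, produces two real linear equations in $\Re(\xi/\eta), \Im(\xi/\eta)$; the compatibility condition for a nontrivial solution is the vanishing of the $2\times 2$ determinant, which after rearrangement is precisely \eqref{alg}, with $\xi,\eta$ as in \eqref{xieta}. The converse direction runs the chain backwards: given $(p_\pm, q_\pm, \nu)$ solving \eqref{pq}-\eqref{alg}, the formulas \eqref{Phiz} define sectionally holomorphic $\Phi_0, \Phi_1$ whose jump across $\Real_{>0}$ matches \eqref{Hp} (by unwinding the factorization) and whose values at $\pm i\nu$ satisfy \eqref{cond1} by virtue of \eqref{alg}; hence the right-hand side of \eqref{main} is an entire function, and inverting the Laplace transform recovers an eigenfunction $\varphi$ with eigenvalue $\lambda$ by Lemma \ref{lem-main}. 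The main obstacle is the bookkeeping of Step 2: tracking the $-b_\alpha$ shifts in both constants from the second-order expansion of $X$, and verifying that exactly one complex parameter $\xi/\eta$ couples $S$ and $D$ in the tightly prescribed way of \eqref{Phiz}; once this matching is established, the remaining derivation is mechanical.
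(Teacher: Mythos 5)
Your proposal follows the paper's own argument closely: the same Riemann factor $X$, the same symmetric/antisymmetric split into $S$ and $D$, the same Sokhotski--Plemelj reduction to \eqref{pq} with linear polynomial terms matching the second-order expansion of $X$, and the same determinant argument converting \eqref{cond1} into the scalar condition \eqref{alg}. One small imprecision: in the fBm case the paper establishes contraction of the operator in \eqref{pq} only for $\nu$ sufficiently large (Lemma \ref{lem-A}), not for every $\nu>0$ as you assert -- though this concerns solvability, which is treated separately in Lemma \ref{cor}, not the equivalence asserted in the present lemma.
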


\begin{proof}
Suppose $(\lambda, \varphi)$ is a solution of the eigenproblem and let us show that the functions $\Phi_0(z)$ and $\Phi_1(z)$,
defined in \eqref{Phi0Phi1}, and the real number $\nu>0$, defined by \eqref{nufla}, solve the aforementioned
integro-algebraic system.

\medskip
\noindent
a) The first step is to find a nowhere vanishing sectionally holomorphic function $X(z)$ on 
$\mathbb{C}\setminus \Real_{\ge 0}$ satisfying the boundary condition
\begin{equation}\label{Xpm}
\frac {X^+(t)}{X^-(t)} = e^{2i \theta(t)}, \quad t>0.
\end{equation}
Since $\theta(t)$ is H\"older on $\Real_{\ge 0}$ and $\theta(t)\sim t^{\alpha-3}$ as $t\to\infty$, such function can be
obtained  by the Sokhotski--Plemelj formula
$$
X(z) =  \exp \left(\frac 1 {\pi} \int_0^\infty \frac{\theta(t)}{t-z}dt\right), \quad z\in \mathbb{C}\setminus \Real_{\ge 0}
$$
and it is unique up to a multiplicative factor $z^\beta$ with an integer $\beta$. Any $\beta\le 0$
will suit our purposes, rendering the functions $S$ and $D$, defined in \eqref{SDdef} below, to be square integrable near the origin. 
This is needed, since the emerging integral equations \eqref{SDHp} can be guaranteed to have the unique solution in $L_2(0,\infty)$. 
The particular choice $\beta=0$ is a matter of convenience:
other values would introduce the multiplicative factor $z^\beta$ in the calculations below, which eventually cancels
out, ultimately leading to the same integro-algebraic system.

Since $\lim_{t\to 0+}\theta(t)=\frac{1-\alpha}{2}\pi>0$,  $X(z)$ satisfies
\begin{equation}
\label{Xgr}
\begin{aligned}
& X(z) \simeq 1 - z^{-1} \nu \, b_\alpha \quad & & \text{as}\ z\to \infty \\
& X(z)\sim z^{\frac{\alpha-1} 2}\quad & &\text{as}\  z\to 0
\end{aligned}
\end{equation}
where $b_\alpha$ is the constant defined in \eqref{thetaint}.
Plugging \eqref{Xpm} into the conditions \eqref{Hp} gives
$$
\begin{aligned}
&
\frac{\Phi_0^+(t)}{X^+(t)} - \frac{\Phi_0^-(t)}{X^-(t)} = 2i  e^{-t} e^{i\theta(t)}\sin\theta(t) \frac{X(-t)}{X^+(t)}\frac{\Phi_1(-t)}{X(-t)}
\\
&
\frac{\Phi_1^+(t)}{X^+(t)} -  \frac{\Phi_1^-(t)}{X^-(t)} = 2i e^{-t}  e^{i\theta(t)}\sin\theta(t) \frac{X(-t)}{X^+(t)}\frac{\Phi_0(-t)}{X(-t)}
\end{aligned}\qquad t>0
$$
and therefore the functions
\begin{equation}\label{SDdef}
\begin{aligned}
& S(z)  := \frac{\Phi_0(z)+\Phi_1(z)}{2X(z)} \\
& D(z)  := \frac{\Phi_0(z)-\Phi_1(z)}{2X(z)}
\end{aligned}
\end{equation}
satisfy the decoupled equations
\begin{equation}
\begin{aligned}\label{SDHp}
&
S^+(t) - S^-(t) \; =\phantom{+} 2ih(t)e^{-t} S(-t)
\\
&
D^+(t)-D^-(t) = - 2i h(t)e^{-t} D(-t)
\end{aligned}\qquad t>0
\end{equation}
where we defined
$$
h(t) := e^{i\theta(t)}\sin \theta(t) \frac {X(-t)}{X^+(t)}.
$$

By the Sokhotski--Plemelj formula
\begin{align*}
\frac {X(-t)}{X^+(t)} =\; &
   \exp \left(\frac 1 {\pi} \int_0^\infty \frac{\theta(s)}{s+t}ds
-\frac 1 {\pi} \dashint_0^\infty \frac{\theta(s)}{s-t}ds-i\theta(t)
\right) =\\
&
 \exp \left(-\frac {2t} {\pi} \dashint_0^\infty \frac{\theta(s)}{s^2-t^2}ds-i\theta(t)
\right)
\end{align*}
where the dash integral stands for the Cauchy principle value.
Integration by parts gives
\begin{equation}
\label{hsq}
\begin{aligned}
h(t) =\; & e^{i\theta(t)}\frac {X(-t)}{X^+(t)}\sin \theta(t)  =
  \exp \left(-\frac {2t} {\pi} \dashint_0^\infty \frac{\theta(s)}{s^2-t^2}ds \right)\sin \theta(t) = \\
 &
  \exp \left(-\frac {1} {\pi} \int_0^\infty \theta'(s)\log \left |\frac{t+s}{t-s} \right|ds \right)\sin \theta(t),
\end{aligned}
\end{equation}
which shows that $h(t)$ is a real valued function, H\"older continuous on $\Real_{\ge 0}$ and
$h(0) :=\sin \theta(0+)=\sin \frac {1-\alpha}2 \pi$.

\medskip
\noindent
b) By the estimates \eqref{grinf}-\eqref{grzero} and \eqref{Xgr}, the functions in the right hand sides of \eqref{SDHp} are
H\"older continuous and vanish as $t\to\infty$. Therefore by the Sokhotski--Plemelj formula
\begin{equation}
\label{SD}
\begin{aligned}
& S(z) = \phantom{+} \frac 1 {\pi } \int_0^\infty \frac{h(t)e^{-t} }{t-z}S(-t) dt+P_1(z)\\
& D(z) = -\frac 1 \pi \int_0^\infty  \frac{h(t)e^{-t}}{t-z} D(-t)dt + P_2(z),
\end{aligned}
\end{equation}
where $P_1(z)$ and $P_2(z)$ are polynomials, to be chosen to match the growth of $S(z)$ and $D(z)$ as $z\to\infty$
(see \eqref{punkt3}(b) in Section \ref{sec-strat}). 

The asymptotic formula $(az+b)/(1-c/z)\simeq a(z+c)+b$ as $z\to\infty$ 
and the estimates \eqref{Xgr} and \eqref{Phi0Phi1} give 
\begin{align*}
& P_1(z)  := - c_2 (z+\nu b_\alpha) +c_1 \\
& P_2(z)  :=  - c_2 (z+\nu b_\alpha) - c_1
\end{align*}
where we defined the constants 
\begin{align*}
& c_1 = -\frac 1 2  \frac{\lambda\Gamma(\alpha)}{c_\alpha}\psi'(1)\\
& c_2 = -\frac 1 2  \frac{\lambda\Gamma(\alpha)}{c_\alpha}\psi(0).
\end{align*}

If we set $z:=-t$ in \eqref{SD} with $t\in \Real_{>0}$,  the following integral equations for $S(-t)$ and $D(-t)$ on the
positive real semiaxis are obtained
\begin{align*}
& S(-t) = \, \phantom{+} \frac 1 {\pi } \int_0^\infty \frac{h(s)e^{-s} }{s+t}S(-s) ds + c_2 (t-\nu b_\alpha) + c_1\\
& D(-t) = -\frac 1 \pi \int_0^\infty  \frac{h(s)e^{-s}}{s+t} D(-s)ds + c_2 (t-\nu b_\alpha) - c_1
\end{align*}
and by linearity
\begin{equation}
\label{SDq}
\begin{aligned}
& S(z\nu) =  c_2 \nu \big(q_+(-z)-b_\alpha p_+(-z)\big)+c_1 p_+(-z)  \\
& D(z\nu) =  c_2 \nu \big( q_-(-z)-b_\alpha p_-(-z)\big)-c_1 p_-(-z).
\end{aligned}
\end{equation}
Combining the definitions \eqref{abpm} and \eqref{SDdef} with \eqref{SDq} gives
\begin{equation}
\label{PhPhA}
\begin{aligned}
& \Phi_0(z\nu) =  
c_2 \nu X_0(z)\big(b_+(-z)-b_\alpha a_+(-z) \big)+c_1 X_0(z) a_-(-z)
\\
& \Phi_1(z\nu) = 
c_2 \nu  X_0(z) \big(b_-(-z)-b_\alpha a_-(-z)\big)+c_1 X_0(z) a_+(-z).
\end{aligned}
\end{equation}

If we now plug  \eqref{PhPhA} into condition \eqref{cond1}, the linear equation 
$$
c_2 \nu \xi +c_1 \eta =0
$$
with respect to the real valued coefficients $c_1$ and $c_2$ is obtained. 
This equation has nontrivial solutions if and only if \eqref{alg} holds, in which case $c_1 =-c_2 \nu \xi/\eta$.
Plugging this back into \eqref{PhPhA} and omitting multiplicative constants gives \eqref{Phiz}.
To recap, starting with an eigenvalue $\lambda$ and the corresponding eigenfunction $\varphi$ we have constructed a
solution to the integro-algebraic system defined in the lemma.

The other direction is shown by reverting all the operations: starting with $(p_\pm, q_\pm, \nu)$ solving the system
\eqref{pq} and \eqref{alg}, a pair of sectionally holomorphic functions $\Phi_0(z)$ and $\Phi_1(z)$ is constructed, so that
the conditions \eqref{cond1} and \eqref{drem} are met. Consequently, inverting the Laplace  transform, a function
$\varphi$ is obtained, so that the pair $(\lambda, \varphi)$ with $\lambda>0$ defined by the formula \eqref{nufla}, solves
the eigenproblem \eqref{eig}.

\end{proof}

\begin{rem}
Note that $b_+(z)-2z\to 0$ and  $a_+(z) \to 2$, while $b_-(z)$ and $a_-(z)$ vanish as $z\to \infty$.
Therefore in view of \eqref{Phi0Phi1} and \eqref{Phiz}, we also have
\begin{equation}
\label{intphilim}
\int_0^1 \varphi(x)dx =  \psi(0)=\frac{c_\alpha}{\lambda \Gamma(\alpha)}\lim_{z\to\infty} z^{-1} \Phi_0(z) =
 -\frac{2 c_\alpha}{\lambda \Gamma(\alpha)} \nu^{-1}
\end{equation}
and
\begin{equation}\label{philim}
\varphi(1)=-\psi'(1)=\frac{c_\alpha}{\lambda \Gamma(\alpha)}\lim_{z\to\infty}  \Phi_1(z) = -\frac{2c_\alpha}{\lambda \Gamma(\alpha)} \frac{\xi}{\eta}
\end{equation}
These formulas will be used to find the precise asymptotics for both quantities.
\end{rem}

The value of $X_0(i)$ for the function $X_0(z)$ defined in \eqref{X0z}, can be found in a closed form and will be useful in calculations to follow:

\begin{lem} \label{lemX0}
$$
\arg \left\{X_0(i)\right\}= \frac {1-\alpha}{ 8}  \pi \quad\text{and} \quad |X_0(i)| = \sqrt{\frac{3-\alpha }{ 2  }}.
$$
\end{lem}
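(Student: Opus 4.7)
The plan is to compute $\log X_0(i) = G(i) := \frac{1}{\pi}\int_0^\infty \frac{\theta_0(t)}{t-i}dt$ directly by splitting into real and imaginary parts. Using $\frac{1}{t-i} = \frac{t+i}{t^2+1}$,
\[
\arg X_0(i) = \frac{1}{\pi}\int_0^\infty \frac{\theta_0(t)}{t^2+1}dt, \qquad \log|X_0(i)| = \frac{1}{\pi}\int_0^\infty \frac{t\,\theta_0(t)}{t^2+1}dt.
\]
Both integrals are real, so $\arg X_0(i)$ and $\log|X_0(i)|$ are determined by evaluating one integral apiece.

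For the argument, the cornerstone is the functional identity $\theta_0(t)+\theta_0(1/t)=\mu$ on $(0,\infty)$, where $\mu := \frac{1-\alpha}{2}\pi$. This follows from the elementary computation
\[
(t^{\beta}+e^{i\mu})(t^{-\beta}+e^{i\mu}) = e^{i\mu}\bigl(2\cos\mu + t^{\beta} + t^{-\beta}\bigr),
\]
together with the observation that $t^{\beta}+t^{-\beta}\ge 2 > 2|\cos\mu|$, so the second factor is a positive real number and the arg of the product is exactly $\mu$. Substituting $t\mapsto 1/t$ in $\int_0^\infty \theta_0(t)/(t^2+1)\,dt$ leaves the measure invariant, so adding the transformed integral to the original and using the functional equation yields $2\pi\arg X_0(i) = \mu\int_0^\infty \frac{dt}{t^2+1} = \frac{\pi\mu}{2}$, whence $\arg X_0(i) = \mu/4 = \frac{1-\alpha}{8}\pi$.

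For the modulus, I would first transform via $u=\log t$ and exploit the same functional equation to reduce the remaining integral, after careful cancellation of divergent pieces, to
\[
\log|X_0(i)| = \frac{1}{\pi}\Bigl(A + \tfrac{\mu}{2}\log 2\Bigr), \qquad A := \int_1^\infty \theta_0(t)\,\frac{t^2-1}{t(t^2+1)}\,dt.
\]
Integration by parts in $A$ with $v = \log((t^2+1)/t)$ and the substitution $u = t^\beta$ produces a Cauchy-type integral with denominator $u^2 + 2u\cos\mu + 1 = (u+e^{i\mu})(u+e^{-i\mu})$. This integral I would evaluate by closing in the upper half-plane, picking up the residue at the simple pole $u = -e^{-i\mu}$ and carefully handling the branch of $u^{1/\beta}$ along $(-\infty,0]$. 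After simplification using $\mu=(1-\alpha)\pi/2$ and $\beta=3-\alpha$, this yields $|X_0(i)|^2 = \beta/2 = (3-\alpha)/2$.

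The main obstacle is the modulus computation: the argument part is a transparent symmetry argument, but the modulus requires careful residue bookkeeping together with branch-cut analysis, since the contour integrand inherits a complex-valued logarithm on the negative real axis that must be matched against the pole residue. Integration-by-parts tricks and naive $t\mapsto 1/t$ symmetry alone produce tautologies, so a genuine complex-analytic computation is unavoidable.
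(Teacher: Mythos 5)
Your argument computation is sound and is in substance the paper's own proof: both hinge on the functional equation $\theta_0(t)+\theta_0(1/t)=\frac{1-\alpha}{2}\pi$, which you derive by the clean algebraic identity $(t^{\beta}+e^{i\mu})(t^{-\beta}+e^{i\mu})=e^{i\mu}(2\cos\mu+t^{\beta}+t^{-\beta})$, while the paper reaches the same identity from $\theta_0(u)+\theta_0(1/u)=\arg\{\Lambda^+(u\nu)\Lambda^+(u^{-1}\nu)\}$. Either route then reduces $\arg X_0(i)=\frac{1}{\pi}\int_0^\infty\theta_0(t)/(t^2+1)\,dt$ to $\frac{\mu}{2\pi}\int_0^\infty\frac{dt}{t^2+1}=\frac{\mu}{4}$; that part is fine.

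The modulus calculation is where the proposal has a genuine gap, and where it diverges completely from the paper. Your manipulations are correct as far as they go: splitting at $t=1$, using the functional equation, evaluating $\int_1^\infty \frac{dt}{t(t^2+1)}=\frac12\log2$, and integrating $A$ by parts against $d\log\frac{t^2+1}{t}$ does produce (after $u=t^\beta$) an integral of the form $\int_0^\infty\frac{\log\bigl(u^{1/\beta}+u^{-1/\beta}\bigr)}{u^2+2u\cos\mu+1}\,du$ with denominator $(u+e^{i\mu})(u+e^{-i\mu})$. But the step ``close in the upper half-plane, picking up the residue at $u=-e^{-i\mu}$'' is not a well-posed move: the integral runs over $[0,\infty)$, not $\mathbb{R}$, so a half-plane closure does not apply; and the numerator $\log(u^{1/\beta}+u^{-1/\beta})$ carries branch points not only at $u\in\{0,\infty\}$ but wherever $u^{2/\beta}=-1$, which for the relevant irrational $1/\beta$ fills out a dense set of rays — this is not a single branch cut along $(-\infty,0]$ that one can ``match against the pole residue.'' You correctly flag this as the main obstacle, but the obstacle is not merely a matter of bookkeeping: the sketched contour argument, as written, does not exist. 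A working direct evaluation would instead go through the Mellin–Parseval formula, using $\int_0^\infty u^{s-1}\log(1+u^{2a})\,du=\pi/(s\sin\frac{\pi s}{2a})$ and $\int_0^\infty\frac{u^{s-1}}{u^2+2u\cos\mu+1}\,du=\frac{\pi\sin\mu(1-s)}{\sin\mu\sin\pi s}$, and then handling a third-order pole at $s=0$ plus residue sums; this is considerably more intricate than what you describe. The paper sidesteps all of this by a structurally different argument: it proves the \emph{global} identity $X_0(z)X_0(-z)=\frac{c_\alpha}{\lambda\Gamma(\alpha)}\frac{z\Lambda(z)}{z^2-z_0^2}$ by contour integration of $\log\widetilde\Lambda$ over the full real line (where $\widetilde\Lambda(z):=\frac{c_\alpha}{\lambda\Gamma(\alpha)}\frac{z\Lambda(z)}{z^2-z_0^2}$ is regular and nonvanishing with $\widetilde\Lambda^\pm(t)\to1$), then takes $z\to z_0=i\nu$ and uses $\Lambda'(z_0)=\frac{\lambda\Gamma(\alpha)}{c_\alpha}(3-\alpha)$ to read off $|X_0(i)|^2=(3-\alpha)/2$. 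That route replaces the hard explicit integral by an identity between analytic functions, which is why it is so much shorter; if you want the direct-integral computation to go through, you must abandon the half-plane closure and set it up as a genuine Mellin–Barnes/residue calculation.
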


\begin{proof}
Define
$$
\Gamma_0(z)=\frac 1 {\pi} \int_0^\infty \frac{\theta_0(t)}{  t-  z}d  u
$$
and note that
$$
\Im \{\Gamma_0(i)\} =\frac 1 {\pi} \int_0^\infty \frac{\theta_0(t)}{  t^2+ 1}d  t=
\frac 1 {\pi} \int_0^1 \frac{\theta_0(u)+\theta_0(u^{-1})}{  u^2+ 1}d  u.
$$
By \eqref{nufla} and  \eqref{Lfla}
\begin{align*}
&
 \theta_0(u) + \theta_0(u^{-1}) =
\arg\Big\{\Lambda^{+} (u\nu)\Lambda^{+} (u^{-1}\nu)\Big\} =  \\
&
\arg\Big\{\left( 1+  u^{\alpha-3}  i e^{-  \frac \pi  2 i \alpha}\right)
\left( 1+  u^{3-\alpha}  i e^{-  \frac \pi  2 i \alpha}\right)\Big\} = \\
&
\arg\Big\{ie^{-  \frac \pi  2 i \alpha}\left( 1-u^{3-\alpha}i e^{   \frac \pi  2 i \alpha} \right)
\left( 1+  u^{3-\alpha}  i e^{-  \frac \pi  2 i \alpha}\right)\Big\} = \\
&
\arg\Big\{i e^{-  \frac \pi  2 i \alpha}  \Big\} = \frac {1- \alpha} 2  \pi = \theta_0(0+)
\end{align*}
and  $\theta_0(u) +\theta_0(u^{-1})=\theta_0(0+)$ by continuity. Hence
$$
\Im \{\Gamma_0(i)\} = \frac {\theta_0(0+)} {\pi} \int_0^1 \frac{1}{  u^2+ 1}d  u=\frac {\theta_0(0+)} {4}
$$
and
$
\arg\left\{X_0(i)\right\}=\arg\left\{e^{i\Im\{\Gamma_0(i)\}}\right\}= \theta_0(0+)/4
$
as claimed.

\medskip

Let us now compute the absolute value of $X_0(i)$. To this end note that
$$
\log X(z)=\frac 1 {\pi} \int_0^\infty \frac{\theta(t)}{t-z}dt=\frac 1 {2\pi i} \int_0^\infty \frac{\log \Lambda^+(t)/\Lambda^-(t) }{t-z}dt
$$
and by  \eqref{prop}
\begin{align*}
\log X (-z) = & \frac 1 {2\pi i} \int_0^\infty \frac{\log \Lambda^+(t)/\Lambda^-(t) }{t+z}dt= \\
&
\frac 1 {2\pi i} \int_{-\infty}^0 \frac{\log \Lambda^+(-t)/\Lambda^-(-t) }{-t+z}dt =
\frac 1 {2\pi i} \int_{-\infty}^0 \frac{\log \Lambda^+(t)/\Lambda^-(t) }{t-z}dt.
\end{align*}
Therefore
$$
\log X(z)+\log X(-z)=\frac 1 {2\pi i} \int_{-\infty}^\infty \frac{\log \Lambda^+(t)/\Lambda^-(t) }{t-z}dt.
$$
The function
$$
\widetilde \Lambda(z):= \frac {c_\alpha} {\lambda \Gamma (\alpha)} \frac{z\Lambda(z)}{z^2-z_0^2}
$$
does not vanish on the cut plane and satisfies $\widetilde \Lambda^+(t)/\widetilde \Lambda^-(t)=\Lambda^+(t)/\Lambda^-(t)$ and $\widetilde \Lambda^\pm(t)\to 1$ as $t\to\pm \infty$.
Thus
$$
\log X(z)+\log X(-z)=\frac 1 {2\pi i} \int_{-\infty}^\infty \frac{\log \widetilde \Lambda^+(t)  }{t-z}dt
-\frac 1 {2\pi i} \int_{-\infty}^\infty \frac{\log  \widetilde \Lambda^-(t) }{t-z}dt,
$$
where both integrals are well defined and can be evaluated by integrating the function $g(\xi):=\log \widetilde \Lambda(\xi)/(\xi-z)$ over half circular
contours in the upper and lower half-planes. For $z$ with $\mathrm{Im}(z)> 0$,
$$
\frac 1 {2\pi i} \int_{-\infty}^\infty \frac{\log \widetilde\Lambda^-(t)  }{t-z}dt=0
$$
and
$$
\frac 1 {2\pi i} \int_{-\infty}^\infty \frac{\log \widetilde \Lambda^+(t)  }{t-z}dt = \Res(g;z) =   \log \widetilde \Lambda(z).
$$
Similarly, for $z$ with $\mathrm{Im}(z)<0$,
$$
\frac 1 {2\pi i} \int_{-\infty}^\infty \frac{\log \widetilde \Lambda^+(t)  }{t-z}dt=0
$$
and
$$
\frac 1 {2\pi i} \int_{-\infty}^\infty \frac{\log  \widetilde \Lambda^-(t) }{t-z}dt = -\Res(g;z) = -\log \widetilde \Lambda(z).
$$
Thus we obtain
$
\log X(z)+\log X(-z) = \log  \widetilde \Lambda(z)
$
or
$$
X(z)X(-z)=  \frac {c_\alpha} {\lambda \Gamma (\alpha)} \frac{z\Lambda(z)}{z^2-z_0^2}.
$$
Taking $z\to z_0$  gives
$$
X(z_0)X(-z_0)= \frac {c_\alpha} {\lambda \Gamma (\alpha)}\frac{1 }{ 2  }\Lambda'(z_0).
$$
Differentiating the expression in \eqref{Lz}, we get
$$
\Lambda'(z_0) = \frac{\lambda \Gamma(\alpha)}{c_\alpha} + (\alpha-2) (i\nu)^{\alpha-3}\frac{\pi i}{\cos \frac \pi 2 \alpha}e^{-\frac\pi 2 \alpha i}
=
\frac{\lambda \Gamma(\alpha)}{c_\alpha} (3-\alpha)
$$
and consequently
$$
|X_0(i)| = \sqrt{X_0(i)X_0(-i)}=\sqrt{\frac {3-\alpha} 2}.
$$

\end{proof}

\subsubsection{Properties of the integro-algebraic system}
So far we established only correspondence between the solutions of the eigenproblem and the integro-algebraic
system of equations. Solvability of this system, proved in Lemma \ref{cor} below, relies essentially on the
properties of the operator
\begin{equation}
\label{Aop}
(A f)(t) := \frac 1 \pi \int_0^\infty  \frac{h_0(s)e^{-\nu s}}{s+t}f(s)ds,
\end{equation}
verified in this subsection.

\begin{lem}\label{lem-A}
The operator $A$ is a contraction on $L^2(0,\infty)$ for all $\nu$ large enough. More precisely, for any $\alpha_0\in (0,1)$
there exist an $\eps>0$ and a constant $\nu'>0$, such that $\|A\| \le 1-\eps$ for all  $\nu\ge \nu'$ and all $\alpha \in [\alpha_0, 1]$.
\end{lem}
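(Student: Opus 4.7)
The plan is to factor $A$ through the classical Hilbert integral operator and extract the contraction from a uniform sup-norm bound on the weight $g_\nu(s):=h_0(s)e^{-\nu s}$. Write $A = H\circ M_{g_\nu}$, where $(Hf)(t)=\tfrac{1}{\pi}\int_0^\infty \frac{f(s)}{s+t}\,ds$ is the Hilbert operator on $L^2(0,\infty)$ and $M_{g_\nu}$ is multiplication by $g_\nu$. By the classical Hardy--Hilbert inequality, $\|H\|_{L^2\to L^2}=1$ with sharp constant, so
$$
\|Af\|_2 \;=\; \|H(g_\nu f)\|_2 \;\le\; \|g_\nu f\|_2 \;\le\; \|g_\nu\|_\infty\,\|f\|_2,
$$
reducing the claim to showing $\|g_\nu\|_\infty \le 1-\varepsilon$ uniformly in $\alpha\in[\alpha_0,1]$ for $\nu\ge\nu'$.

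Next, I would read off the key properties of $h_0$ from the representation \eqref{hsq} applied with $X:=X_0$, namely
$$
h_0(t) \;=\; \sin\theta_0(t)\,\exp\!\left(-\frac{1}{\pi}\int_0^\infty \theta_0'(u)\log\Bigl|\frac{t+u}{t-u}\Bigr|\,du\right).
$$
Since $\theta_0$ descends monotonically from $\theta_0(0+)=\tfrac{1-\alpha}{2}\pi$ to $0$ and behaves like $t^{\alpha-3}$ at infinity, $h_0$ is real, continuous on $[0,\infty)$, satisfies $h_0(0)=\sin\tfrac{1-\alpha}{2}\pi$, and decays to $0$ at infinity; a direct check using $\theta_0'(s)=O(s^{2-\alpha})$ as $s\to 0$ and $\theta_0'(s)=O(s^{\alpha-4})$ as $s\to\infty$ shows the exponential correction is bounded by a constant $C(\alpha_0)$, uniformly on $t\ge 0$ and $\alpha\in[\alpha_0,1]$. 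In particular $\|h_0\|_\infty\le C(\alpha_0)$.

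To conclude, fix $\eta\in(0,1-\sin\tfrac{1-\alpha_0}{2}\pi)$ and, by uniform continuity of the kernel representation of $h_0$ jointly in $(t,\alpha)$ on $[0,\delta]\times[\alpha_0,1]$, choose $\delta>0$ so that $|h_0(t)|\le \sin\tfrac{1-\alpha_0}{2}\pi+\eta$ for all $t\in[0,\delta]$ and $\alpha\in[\alpha_0,1]$. Then
$$
\|g_\nu\|_\infty \;\le\; \max\Bigl\{\sin\tfrac{1-\alpha_0}{2}\pi+\eta,\; C(\alpha_0)\,e^{-\nu\delta}\Bigr\},
$$
and the second term falls below the first as soon as $\nu\ge\nu'$ for some $\nu'=\nu'(\alpha_0,\delta)$. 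Setting $\varepsilon:=\tfrac{1}{2}\bigl(1-\sin\tfrac{1-\alpha_0}{2}\pi-\eta\bigr)>0$ yields $\|A\|\le\|g_\nu\|_\infty\le 1-\varepsilon$ for all $\nu\ge\nu'$, $\alpha\in[\alpha_0,1]$.

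The only nontrivial step is the uniform-in-$\alpha$ control of $h_0$: namely, the uniform boundedness of the exponential factor in \eqref{hsq} and the equicontinuity of $h_0$ at the origin across $\alpha\in[\alpha_0,1]$. This is what forces the restriction $\alpha\ge\alpha_0$, since $\sin\tfrac{1-\alpha}{2}\pi\uparrow 1$ as $\alpha\downarrow 0$ and the argument degenerates in that limit. Once this equicontinuity is established by a direct estimate on $\theta_0'$ using the explicit form of $\theta_0$ displayed after \eqref{thetat}, the rest of the proof is the bookkeeping described above.
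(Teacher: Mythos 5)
Your proof is correct and follows essentially the same strategy as the paper: establish the uniform sup bound $|h_0(t)|e^{-\nu t}\le 1-\eps$ by splitting $[0,\delta]\cup(\delta,\infty)$, using $h_0(0)=\sin\tfrac{1-\alpha}{2}\pi<1$ and continuity near the origin and the exponential decay for $t>\delta$, and then pass this sup bound to the operator norm via the unit-norm Hilbert kernel $\tfrac1\pi\tfrac{1}{s+t}$. The only difference is presentational: you factor $A=H\circ M_{g_\nu}$ and invoke the classical Hardy--Hilbert inequality $\|H\|_{L^2\to L^2}=1$, whereas the paper re-derives that norm bound inline via a Schur test with weight $\sqrt{s/t}$.
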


\begin{proof}
The function $h_0(t)  := h(t\nu)$ does not depend on $\nu$, is continuous on $(0,\infty)$
and satisfies $\lim_{t\to\infty}h_0(t)=0$ and $h_0(0+) = \sin \frac {1-\alpha} 2\pi\in (0,1)$.
Moreover, a direct calculation shows that $\theta_0(t)$ decreases and
$|\theta_0'(t)|/3\le   t^{-3} \wedge 1$ and hence by \eqref{hsq}
$$
|h_0(t)| \le g_0(t) \big|\sin \theta_0(t)\big| =:\widetilde h_0(t)
$$
where the function
$$
g_0(t):=\exp \left( 3 \int_0^\infty \big(s^{-3} \wedge 1\big) \log \left |\frac{t+s}{t-s} \right|ds \right)
$$
is continuous and bounded with $g_0(0)=1$ and $\lim_{t\to\infty}g_0(t)=1$. In particular, both norms $\|h_0\|_\infty$ and
$\|\widetilde h_0\|_\infty$ are uniformly bounded over $\alpha\in (0,1]$.

Since $|\widetilde h_0(0)|\le \sin \frac {1-\alpha_0} 2\pi=:1-2\eps<1$, there exists a $\delta>0$ so that $\widetilde h_0(t)\le 1-\eps$ for all $t\le \delta$ and therefore
$$
|h_0(t)| e^{-t\nu }\le |\widetilde h_0(t)| e^{-t\nu } \le (1-\eps) \one{t\le \delta} + \|\widetilde h\|_\infty e^{-\nu \delta}\one{t>\delta} \le 1-\eps,\quad t\ge 0,
$$
for all $\nu\ge \nu':= \frac 1 \delta \log \big(\|\widetilde h\|_\infty/(1-\eps)\big)$ and all $\alpha\in [\alpha_0,1]$.
Consequently, for any $g,f\in L^2(0,\infty)$ and all such $\nu$
\begin{align*}
&
|\langle g, A f \rangle| \le
\frac {1-\eps} \pi
\int_0^\infty \int_0^\infty |g(t)| |f(s)|  \frac{1}{s+t}  dsdt \le \\
&
\frac {1-\eps} \pi
\left(\int_0^\infty |f(s)|^2 \int_0^\infty     \frac{(s/t)^{\frac 1 2} }{ s+t }   dtds  \right)^{\frac 1 2}
\left(\int_0^\infty |g(t)|^2 \int_0^\infty   \frac{(t/s)^{\frac 1 2}}{ s+t}     ds dt\right)^{\frac 1 2} \le
(1-\eps)
\|f\|\|g\|,
\end{align*}
where the last inequality holds, since
$$
\int_0^\infty     \frac{(s/t)^{\frac 1 2} }{ s+t }   dt = \int_0^\infty     \frac{u^{-\frac 1 2} }{ 1+u }   du = \pi.
$$
The claim now follows, since
$
\|A f\|^2= \langle Af, Af\rangle  \le (1-\eps)\|f\|\|Af\|.
$
\end{proof}

The following lemma gathers several useful estimates:
\begin{lem}\label{lem-bnds}
For any $\alpha_0\in (0,1)$ there exist constants $\nu'$ and $C$, such that for all $\nu\ge \nu'$ and all $\alpha \in [\alpha_0, 1]$,
\begin{equation}
\label{AAi}
\begin{aligned}
& \big|a_-(\pm i)\big| \le C  \nu^{-1},   \quad  \big|a_+(\pm i)-2| \le C \nu^{-1} \\
& \big|b_-(\pm i)\big| \le C  \nu^{-2},  \quad  \big|b_+(\pm i)\mp 2i  | \le C \nu^{-2} \\
\end{aligned}
\end{equation}
and for all $\tau>0$
\begin{equation}
\label{AA}
\begin{aligned}
& \big|a_-(\tau)\big| \le C  \nu^{-1} \tau^{-1},  \quad \big|a_+(\tau)-2| \le C  \nu^{-1} \tau^{-1}\\
& \big|b_-(\tau)\big| \le C    \nu^{-2} \tau^{-1}, \quad \big|b_+(\tau)-2\tau  | \le C    \nu^{-2} \tau^{-1}
\end{aligned}
\end{equation}
\end{lem}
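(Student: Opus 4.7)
The plan combines the $L^2$ contraction from Lemma \ref{lem-A} with a pointwise iteration of the integral equations \eqref{pq}. The first step is to set $\tilde p_\pm := p_\pm - 1$ and $\tilde q_\pm := q_\pm - t$, so that \eqref{pq} becomes $(I \mp A)\tilde p_\pm = \pm A(1)$ and $(I \mp A)\tilde q_\pm = \pm A(t)$. Since $\|A\| \le 1 - \eps$ on $L^2(0,\infty)$ by Lemma \ref{lem-A}, the operators $I \mp A$ are invertible with uniformly bounded inverses, so $\|\tilde p_\pm\|_2 \le C\|A(1)\|_2$ and $\|\tilde q_\pm\|_2 \le C\|A(t)\|_2$. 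A direct calculation via the substitution $s \mapsto s/\nu$ will give $\|A(1)\|_2 = O(\nu^{-1/2})$ and $\|A(t)\|_2 = O(\nu^{-5/2})$, and hence $\|a_-\|_2 = \|a_+ - 2\|_2 = O(\nu^{-1/2})$ and $\|b_-\|_2 = \|b_+ - 2t\|_2 = O(\nu^{-5/2})$, uniformly in $\alpha \in [\alpha_0,1]$.

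For the pointwise bounds \eqref{AA} at $\tau > 0$, I will exploit the symmetric coupled identities $a_+ = 2 + Aa_-$ and $a_- = Aa_+$ (obtained by adding and subtracting the equations in \eqref{pq}), together with two elementary estimates of $A$: the direct bound $|Af(\tau)| \le C\|f\|_\infty/(\nu\tau)$ coming from $\int_0^\infty e^{-\nu s}/(s+\tau)\,ds \le 1/(\nu\tau)$, and the Cauchy--Schwarz bound $|Af(\tau)| \le C\|f\|_2/\sqrt{\tau}$. Applying the second to $f = a_-$ yields $|a_+(s) - 2| \le C/\sqrt{\nu s}$; substituting this into $a_-(\tau) = Aa_+(\tau)$ and integrating by the first estimate delivers $|a_-(\tau)| \le C/(\nu\tau)$. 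The companion bound $|a_+(\tau) - 2| \le C/(\nu\tau)$ will then be extracted by inserting the just-established pointwise bound $|a_-(s)| \le C/(\nu s)$ (valid for $\nu s \ge 1$) together with a logarithmic bound $|a_-(s)| \le C(1 + |\log(\nu s)|)$ near the origin (which follows from $a_- = 2A(1) + A^2 a_-$ and the elementary estimate $|A(1)(s)| \le Ce^{\nu s}E_1(\nu s)$) into $a_+(\tau) - 2 = Aa_-(\tau)$, splitting the integration at $s = 1/\nu$. The analogous scheme works for $b_\pm$, with the extra factor $\nu^{-1}$ coming from the improved direct estimate $|A(t)(\tau)| \le C/(\nu^2\tau)$ (which uses $\int_0^\infty se^{-\nu s}\,ds = 1/\nu^2$).

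For the bounds \eqref{AAi} at the complex points $\pm i$, I will use $|s \pm i|^2 = s^2 + 1 \ge 1$ and apply Cauchy--Schwarz in the form $|Af(\pm i)| \le C\nu^{-1/2}\|f\|_2$. Together with the $L^2$ bounds of Step~1, this gives the required $O(\nu^{-1})$ estimates for $a_-(\pm i)$ and $a_+(\pm i)-2$. For the $b$-functions, combining the same estimate with $s/|s \pm i| \le 1$ and $\int_0^\infty se^{-\nu s}\,ds = 1/\nu^2$ produces the $O(\nu^{-2})$ rate for $b_-(\pm i)$ and $b_+(\pm i) \mp 2i$.

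The hardest part will be the interplay between $L^2$ and pointwise control of $a_-$ near the origin: the sharp pointwise bound $|a_-(s)| \le C/(\nu s)$ fails to be integrable at $s = 0$ and cannot be plugged naively into the integral $Aa_-(\tau)$, while the $L^2$-based Cauchy--Schwarz estimate alone only yields $|Aa_-(\tau)| \le C/\sqrt{\nu\tau}$, which is too weak for large $\tau$. The device that overcomes this obstacle is the two-piece splitting of the integration at $s = 1/\nu$, which exploits the logarithmic $L^1$-integrability $\int_0^1|\log u|\,du < \infty$ together with the rapid decay of the kernel on $(1/\nu,\infty)$ to yield the sharp $C/(\nu\tau)$ rate.
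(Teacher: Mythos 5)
Your overall plan is sound, and the first step coincides with the paper's: write $(I\mp A)\tilde p_\pm = \pm A(1)$, $(I\mp A)\tilde q_\pm = \pm A(t)$, use $\|A\|\le 1-\eps$ from Lemma~\ref{lem-A} to get $L^2$ bounds, and compute $\|A(1)\|_2$, $\|A(t)\|_2$ via generalized Minkowski. There is, however, a computational slip: $\|A(t)\|_2 = O(\nu^{-3/2})$, not $O(\nu^{-5/2})$. Indeed, generalized Minkowski gives
$\|A(t)\|_2 \le \frac{\|h_0\|_\infty}{\pi}\int_0^\infty e^{-\nu s} s \bigl(\int_0^\infty (s+\tau)^{-2}\,d\tau\bigr)^{1/2} ds
= \frac{\|h_0\|_\infty}{\pi}\int_0^\infty e^{-\nu s}\sqrt{s}\,ds \le C\nu^{-3/2}$,
which is also what the paper obtains. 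The conclusions survive with the correct power, since the $\pm i$ estimates you derive from it still yield $O(\nu^{-2})$.

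The more substantive issue is your treatment of the pointwise bounds \eqref{AA}. You have the right device in your $\pm i$ argument — bound the kernel by a $\tau$-independent quantity, keep $e^{-\nu s}$ with the $L^2$ factor, and apply Cauchy–Schwarz in the form $\int_0^\infty e^{-\nu s}|f(s)|\,ds \le (2\nu)^{-1/2}\|f\|_2$ — but you do not deploy it at real $\tau>0$. Doing so is what the paper does and it collapses the whole argument: bound $1/(u+\tau)\le 1/\tau$ first, then
\[
\bigl|\tilde q_\pm(\tau)\bigr| \le \frac{\|h_0\|_\infty}{\pi\tau}\int_0^\infty e^{-\nu u}\bigl|\tilde q_\pm(u)\bigr|\,du
+ \frac{\|h_0\|_\infty}{\pi\tau}\int_0^\infty e^{-\nu u}u\,du
\le \frac{\|h_0\|_\infty}{\pi\tau}\Bigl((2\nu)^{-1/2}\|\tilde q_\pm\|_2 + \nu^{-2}\Bigr)
\le \frac{C}{\nu^2\tau},
\]
and analogously $|\tilde p_\pm(\tau)|\le C/(\nu\tau)$, giving \eqref{AA} directly. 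Your crude Cauchy–Schwarz $|Af(\tau)|\le C\|f\|_2/\sqrt{\tau}$ discards the exponential weight, loses a factor $\nu^{-1/2}$, and forces you into the coupled-equation bootstrap plus a splitting at $s=1/\nu$. That route is repairable but not as you wrote it: the near-origin bound on $a_-$ cannot be reduced to $C(1+|\log\nu s|)$, because the $A^2a_-$ contribution produces an additional $C\nu^{-1/2}s^{-1/2}$ term dominating the logarithm as $s\to 0$; you would need to carry this term along (it is still integrable on $(0,1/\nu)$, so the $C/(\nu\tau)$ rate survives, but your justification as stated has a gap). In short, the weighted Cauchy–Schwarz step you already use at $\pm i$ is the idea the paper uses on the real axis as well, and recognizing that it applies at $\tau>0$ makes the bootstrap, the splitting, and the log estimate unnecessary.
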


\begin{proof}
The operator $A$ maps functions $1$ and $t$ into $L^2(0,\infty)$ and since, by Lemma \ref{lem-A}, for any
$\alpha_0\in (0,1)$,  $A$ is a contraction for all $\nu$ large enough,
equations \eqref{pq} have unique solutions, satisfying $\big(p_\pm (t)-1; t\ge 0\big)\in L^2(0,\infty)$ and
$\big(q_\pm (t)-t; t\ge 0\big)\in L^2(0,\infty)$.
Let us estimate the norms of the functions $p_\pm(t)-1$ and $q_\pm(t)-t$. To this end, write
\begin{equation}
\label{canwrite}
(q_{\pm }(\tau) -\tau) = \pm \frac 1 {\pi } \int_0^\infty \frac{h_0(u)e^{-u\nu } }{u +\tau }\big(q_\pm (u)-u\big) du
\pm \frac 1 {\pi } \int_0^\infty \frac{h_0(u)e^{-u\nu } }{u +\tau }u du.
\end{equation}
By the generalized Minkowski inequality, the $L^2(0,\infty)$ norm of the last term on the right satisfies
\begin{align*}
&
\left(
\int_0^\infty\left(
\int_0^\infty \frac 1 {\pi } \frac{h_0(u)e^{-u\nu } }{u +\tau }u du
\right)^2 d\tau
\right)^{\frac 1 2} \le
\int_0^\infty\left(
\int_0^\infty \left(\frac 1 {\pi } \frac{h_0(u)e^{-u\nu } }{u +\tau }u \right)^2 d\tau
\right)^{\frac 1 2}du \le \\
&
\frac 1 {\pi } \|h_0\|_\infty
\int_0^\infty u e^{-u\nu }\left(
\int_0^\infty \frac{1 }{(u +\tau)^2}  d\tau
\right)^{\frac 1 2}du=
\frac 1 {\pi } \|h_0\|_\infty
\int_0^\infty \sqrt{u} e^{-u\nu }du  
\le \|h_0\|_\infty \nu^{-\frac 32}.
\end{align*}
The integral operator in   \eqref{canwrite} is contracting in $L^2(0,\infty)$ with $\|A\|<1-\eps$, where $\eps$ is independent of $\nu$.
Therefore, the norm of $q_{\pm }(\tau) -\tau$ is bounded by $\eps^{-1}\|h_0\|_\infty \nu^{-\frac 3 2 }$. Similarly, the norm of $p_{\pm }(\tau) -1$ is bounded by $ \eps^{-1}\|h_0\|_\infty \nu^{-\frac 1 2 }$.
Using these estimates, we obtain
\begin{align*}
&
\big|q_{\pm }(\tau) - \tau\big| = \left|
\frac 1 {\pi } \int_0^\infty \frac{h_0(u)e^{-u\nu} }{u +\tau }q_\pm (u) du
\right|  \le  \\
&
\frac 1 {\pi } \int_0^\infty \frac{|h_0(u)|e^{-u\nu} }{u +\tau }\big|q_\pm (u)-u\big| du
+\frac 1 {\pi } \int_0^\infty \frac{|h_0(u)|e^{-u\nu} }{u +\tau }u du \le  \\
&
 \|h_0\|_\infty\frac 1 {\tau}\left(\int_0^\infty  e^{-2 u\nu}    du \right)^{1/2}\left(\int_0^\infty    \big(q_\pm (u)-u\big)^2 du\right)^{1/2}
+ \|h_0\|_\infty \frac 1 {\tau}\int_0^\infty   e^{-u\nu}  u du \le  \frac C {\nu^2 \tau},
\end{align*}
and, similarly,
$
\big|p_{\pm }(\tau) - 1\big| \le C \nu^{-1}\tau^{-1}.
$
Here constant $C$ depends only on  $\|h_0\|_\infty$ and $\eps$ and therefore only on $\alpha_0$ (see Lemma \ref{lem-A}).
The  bounds claimed in \eqref{AA} now follow from the definition of  $a_\pm(\tau)$ and $b_\pm(\tau)$. The estimates \eqref{AAi} are obtained analogously.
\end{proof}

\subsubsection{Inversion of the Laplace transform}

\begin{figure}
\begin{tikzpicture}
\draw[help lines,->] (-3,0) -- (3,0) coordinate (xaxis);
\draw[help lines,->] (0,-3) -- (0,3) coordinate (yaxis);


\path[draw,line width=0.8pt,
decoration={markings,
mark=at position 0.75cm with {\arrow[line width=1pt]{<}},
mark=at position 3.5cm with {\arrow[line width=1pt]{<}},
mark=at position 5.7cm with {\arrow[line width=1pt]{<}}
},
postaction=decorate] (0,0.1) -- (2,0.1) arc (0:90:2) -- (0,1.2) arc (90:-90:0.1) -- (0,0.1);

\path[draw,line width=0.8pt,
decoration={markings,
mark=at position 0.75cm with {\arrow[line width=1pt]{>}},
mark=at position 3.5cm with {\arrow[line width=1pt]{>}},
mark=at position 5.7cm with {\arrow[line width=1pt]{>}}
},
postaction=decorate] (0,-0.1) -- (2,-0.1) arc (0:-90:2) -- (0,-1.0) arc (90:-90:0.1) -- (0,-0.1);

\path[draw,line width=0.8pt,
decoration={markings,
mark=at position 0.75cm with {\arrow[line width=1pt]{<}},
mark=at position 3.5cm with {\arrow[line width=1pt]{<}},
mark=at position 5.7cm with {\arrow[line width=1pt]{<}}
},
postaction=decorate] (0,0.1) -- (-2,0.1) arc (180:90:2) ; 

\path[draw,line width=0.8pt,
decoration={markings,
mark=at position 0.75cm with {\arrow[line width=1pt]{>}},
mark=at position 3.5cm with {\arrow[line width=1pt]{>}},
mark=at position 5.7cm with {\arrow[line width=1pt]{>}}
},
postaction=decorate] (0,-0.1) -- (-2,-0.1) arc (180:275:2) ; 

\draw (0,1.1) circle (1pt);
\draw (0,-1.1) circle (1pt);

\node[below] at (xaxis) {$\mathrm{Re}(z)$};
\node[left] at (yaxis) {$\mathrm{Im}(z)$};
\node at (-0.3, 1.1) {$z_0$};
\node at (-0.45, -1.1) {$-z_0$};
\node at (1.8,1.8) {$C_R^+$};
\node at (1.8,-1.8) {$C_R^-$};
\node at (-1.8,1.8) {${^+}C_R$};
\node at (-1.8,-1.8) {${^-}C_R$};
\node at (1.0,0.3) {$\Sigma_+$};
\node at (1.0,-0.35) {$\Sigma_-$};
\node at (-1.0,0.3) {${_+}\Sigma$};
\node at (-1.0,-0.35) {${_-}\Sigma$};
\node at (0.45, 1.5) {$L^+_{\delta,R}$};
\node at (0.45, -1.5) {$L^-_{\delta,R}$};
\end{tikzpicture}
  \caption{\label{fig2} Integration contour, used for the Laplace transform inversion: the outer circular arcs are of radius $R$ and the half circles around the poles
  have radius $\delta$ }
\end{figure}
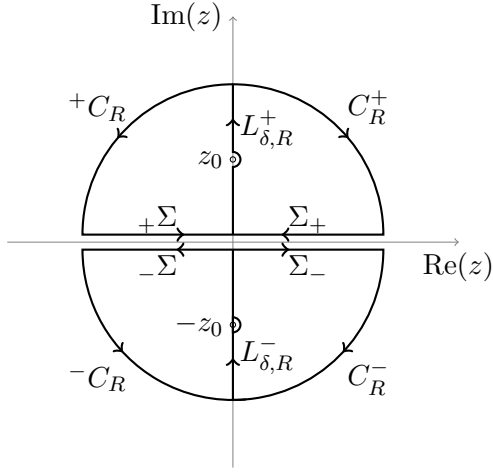

The expression for the eigenfunctions is obtained by inverting the Laplace transform, given by the formula \eqref{main}:

\begin{lem}
Let $\big(\Phi_0, \Phi_1, \nu\big)$ satisfy the integro-algebraic system introduced in Lemma \ref{lemia}, then the function
\begin{multline}\label{phinu}
\varphi(x) =
- \frac 2{3-\alpha} \mathrm{Re}\Big\{e^{i\nu x} \Phi_0(i\nu) \Big\} \\
+ \frac 1 { \pi }  \int_0^{\infty}    \frac{ \sin \theta_0(u)}{\gamma_0(u)} \left(e^{-u\nu (1-x)}\Phi_1(-u\nu )-e^{-u\nu x} \Phi_0(-u\nu )\right)du
\end{multline}
where
$
\gamma_0(u) := \frac{\cos \frac \pi 2 \alpha}{\pi} \nu^{2-\alpha} |\Lambda^+(\nu u )|
$
and $\lambda$ defined by the formula \eqref{nufla} solve the eigenproblem \eqref{eig}.
Moreover,
\begin{equation}\label{intphi}
\int_0^1 \varphi(x)dx =   -2\nu^{-1} \quad \text{and}\quad
\varphi(1)= -2\frac{\xi}{\eta}.
\end{equation}

\end{lem}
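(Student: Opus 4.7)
The plan is to invert the Laplace transform representation \eqref{main} by contour deformation. The crucial enabling fact is that, for any solution $(\Phi_0,\Phi_1,\nu)$ of the integro-algebraic system from Lemma \ref{lemia}, the function $\widehat{\varphi}(z)$ is by construction entire: the algebraic constraint \eqref{alg} (equivalently \eqref{cond1}) cancels the poles of $1/\Lambda$ at $\pm i\nu$, and the Riemann boundary conditions \eqref{Hp} cancel the jumps across the real axis. I would therefore start from the Bromwich inversion
$$
\varphi(x) = \frac{1}{2\pi i}\int_{-i\infty}^{i\infty} e^{zx}\widehat{\varphi}(z)\,dz, \quad x\in(0,1),
$$
substitute \eqref{main}, and split into two integrals $I_1$ (carrying $\Phi_0(z)/\Lambda(z)$) and $I_2$ (carrying $e^{z(x-1)}\Phi_1(-z)/\Lambda(z)$, after incorporating $e^{zx}\cdot e^{-z}$). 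The constant $\widehat{\varphi}(0)$ contributes a delta supported at $x=0$ and is irrelevant on the open interval.

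I would then deform the two pieces onto the contour of Figure \ref{fig2}. Since $|e^{zx}|$ decays in the left half-plane for $x>0$, $I_1$ is deformed to the left, picking up the pole residue of $1/\Lambda$ at $-i\nu$ plus the integral around the negative real axis cut of $\Lambda$. Since $|e^{z(x-1)}|$ decays in the right half-plane for $x<1$, $I_2$ is deformed symmetrically to the right, picking up $+i\nu$ and the positive axis cut. Vanishing of the large arcs follows from Jordan's lemma together with \eqref{grinf} and the bound $|\Lambda(z)|\gtrsim|z|$ at infinity. The two residues involve $\Phi_0(-i\nu)/\Lambda'(-i\nu)$ and $\Phi_1(-i\nu)/\Lambda'(i\nu)$; trading $\Phi_1(-i\nu)=-e^{i\nu}\Phi_0(i\nu)$ via \eqref{cond1}, using $\Phi_0(-i\nu)=\overline{\Phi_0(i\nu)}$, and inserting the explicit $\Lambda'(\pm i\nu)=\frac{\lambda\Gamma(\alpha)}{c_\alpha}(3-\alpha)$ computed in the proof of Lemma \ref{lemX0}, they combine into the first term of \eqref{phinu} (the overall factor $c_\alpha/(\lambda\Gamma(\alpha))$ is absorbed into the normalization of $\Phi_0,\Phi_1$ adopted in \eqref{Phiz}). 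For the cut integrals I would apply the Sokhotski--Plemelj jump $1/\Lambda^+(t)-1/\Lambda^-(t)=-2i\sin\theta(t)/|\Lambda^+(t)|$; exploiting the symmetries \eqref{conjp}--\eqref{absL} ($|\Lambda^+|$ is even and $\sin\theta(-t)=\sin\theta(t)$), the positive and negative axis cut contributions merge, and the change of variable $t=u\nu$ together with the definition of $\gamma_0$ produces the integral term in \eqref{phinu}.

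The identities \eqref{intphi} are extracted from the leading $z\to\infty$ asymptotics of \eqref{Phiz}: by Lemma \ref{lem-bnds} one has $b_+(w)\simeq 2w$, $a_+(w)\to 2$, $b_-(w),a_-(w)\to 0$ and $X_0(w)\to 1$, giving $\Phi_0(z)/z\to -2/\nu$ and $\Phi_1(z)\to -2\xi/\eta$. Matching these against \eqref{Phi0Phi1}, whose leading coefficients involve $\psi(0)=\int_0^1\varphi(x)\,dx$ and $\psi'(1)=-\varphi(1)$, yields \eqref{intphi} after accounting for the overall normalization in \eqref{Phiz}. The main technical obstacle is the delicate bookkeeping required to combine the residues and the two cut contributions into the compact expression \eqref{phinu}: one must verify that the jumps of $1/\Lambda$ on the positive and negative real axes, together with the boundary conditions \eqref{Hp} tying $\Phi_0^\pm$ to $\Phi_1^\mp$, conspire to produce an integrand involving only $\Phi_0(-u\nu)$ and $\Phi_1(-u\nu)$, rather than the four separate boundary values $\Phi_j^\pm$.
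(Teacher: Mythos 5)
Your approach matches the paper's proof: Bromwich inversion of \eqref{main}, split into the $\Phi_0$- and $\Phi_1$-pieces, each deformed onto the Figure~\ref{fig2} contour in its decaying half-plane, with the residues at $\pm i\nu$ and the two cut integrals assembling into \eqref{phinu}, and \eqref{intphi} read off the $z\to\infty$ limits of \eqref{Phiz}. Two clarifications are worth recording. The anticipated ``main technical obstacle'' does not actually arise: the $\Phi_0$-piece is pushed into the \emph{left} half-plane, where $\Phi_0$ is regular (its cut lies on $\Real_{\ge 0}$), and the $\Phi_1$-piece into the \emph{right} half-plane, where $\Phi_1(-\cdot)$ is regular; consequently the cut integrals involve only the single-valued quantities $\Phi_0(-t)$ and $\Phi_1(-t)$, $t>0$, the jump being supplied entirely by $1/\Lambda$, and the conditions \eqref{Hp} play no role in the deformation itself --- they were needed only earlier, to establish that $\widehat\varphi$ is entire so that the Bromwich contour can sit on the imaginary axis. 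Also, with the indentations as drawn in Figure~\ref{fig2} (both bulging to the right, so that $\pm i\nu$ lie to the left of the vertical segment), the $\Phi_0$-piece collects \emph{both} residues while the $\Phi_1$-piece collects none; your attribution of one pole to each piece corresponds to an asymmetric indentation scheme and, after trading $\Phi_1(-i\nu)=-e^{i\nu}\Phi_0(i\nu)$ via \eqref{cond1}, produces the same residue total $-\frac{2}{3-\alpha}\Re\big\{e^{i\nu x}\Phi_0(i\nu)\big\}$, so the discrepancy is only one of bookkeeping.
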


\begin{proof}
For any $\nu$ solving the integro-algebraic system \eqref{pq} and \eqref{alg},  $\widehat \varphi(z)$ is an entire function and we can carry out the inversion, integrating on the imaginary axis:
\begin{equation}\label{phix}
\begin{aligned}
\varphi(x)  = & -\frac 1 {2\pi i}\lim_{R\to\infty} \int_{-iR}^{iR}
\left(\frac{e^{-z}\Phi_1(-z)}{\Lambda(z)}+\frac{\Phi_0(z)}{\Lambda(z)}- \widehat \varphi(0)\right) e^{zx}dz\\
& =: -\frac 1 {2\pi i}  \int_{-i\infty}^{i\infty} \big(f_1(z)+f_2(z)\big)dz,
\end{aligned}
\end{equation}
where we defined
$$
f_1(z)= e^{z(x-1)} \frac{\Phi_1(-z)}{\Lambda(z)}
\quad \text{and}\quad
f_2(z)=
 e^{zx}  \left(\frac{\Phi_0(z)}{\Lambda(z)}- \widehat \varphi(0)\right).
$$
Note that for all $x\in (0,1)$, the functions $f_1(z)$ and $f_2(z)$ decrease exponentially as $z\to \infty$, when $\mathrm{Re}(z)>0$ and $\mathrm{Re}(z)<0$
respectively. Hence integrating $f_1(z)$ over the contours, depicted in Figure \ref{fig2}, which lie in the right half plane, we get
\begin{align*}
&
\int_{L^+_{\delta,R}}f_1(z)dz + \int_{C_R^+}f_1(z)dz + \int_{\Sigma_+}f_1(z)dz=0 \\
&
\int_{L^-_{\delta,R}}f_1(z)dz + \int_{C_R^-}f_1(z)dz + \int_{\Sigma_-}f_1(z)dz=0
\end{align*}
Similarly, integrating $f_2(z)$ over the contours in the left half plane  gives
\begin{align*}
& \int_{L^+_{\delta,R}}f_2(z)dz + \int_{{^+}C_R}f_2(z)dz+\int_{{_+}\Sigma}f_2(z)dz = 2\pi i\; \Res(f_2,z_0)\\
& \int_{L^-_{\delta,R}}f_2(z)dz + \int_{{^-}C_R}f_2(z)dz+\int_{{_-}\Sigma}f_2(z)dz = 2\pi i\; \Res(f_2,-z_0)
\end{align*}
Summing up these equations, taking $R\to\infty$ and $\delta\to 0$ and using Jordan's lemma we get
\begin{align*}
\int_{-i\infty}^{i\infty} \big(f_1(z)+f_2(dz)\big)dz = &
%
%
\int_0^{\infty} \big(f_1^+(t)-f_1^-(t)\big)dt  + \int_{0}^\infty\big(f_2^-(-t)-f_2^+(-t)\big)dt \\
&
+2\pi i\; \Res\big(f_2,z_0\big)+2\pi i\; \Res\big(f_2,-z_0\big)
\end{align*}
By the properties \eqref{conjp}-\eqref{absL}, for $t>0$
$$
f_1^+(t)-f_1^-(t) = e^{t(x-1)}\Phi_1(-t)\left( \frac{1}{\Lambda^+(t)}-  \frac{1}{\Lambda^-(t)}\right)=
-e^{t(x-1)}\Phi_1(-t) \frac{2i\sin \theta(t)}{\gamma(t)},
$$
and
$$
f_2^-(-t)-f_2^+(-t)=e^{-tx} \Phi_0(-t)\left( \frac{1}{\Lambda^-(-t)} -   \frac{1}{\Lambda^+(-t)}\right)=
e^{-tx} \Phi_0(-t)\frac{2i\sin \theta(t)}{\gamma(t)},
$$
where $\gamma(t)=|\Lambda^+(t)|$.
Plugging all these expressions into \eqref{phix} gives
$$
\varphi(x)  =
\frac 1 { \pi } \int_0^{\infty}    \frac{ \sin \theta(t)}{\gamma(t)} \left(e^{-t(1-x)}\Phi_1(-t)-e^{-tx} \Phi_0(-t)\right)dt
-\Res\big(f_2,z_0\big)-\Res\big(f_2,-z_0\big).
$$
The integral term on the right hand side can be rewritten as
\begin{multline*}
\frac 1 { \pi } \int_0^{\infty}    \frac{ \sin \theta(t)}{\gamma(t)} \left(e^{-t(1-x)}\Phi_1(-t)-e^{-tx} \Phi_0(-t)\right)dt = \\
 \frac{\cos \frac \pi 2 \alpha}{\pi} \nu^{3-\alpha } \frac 1 { \pi }  \int_0^{\infty}    \frac{ \sin \theta_0(u)}{\gamma_0(u)} \left(e^{-u\nu (1-x)}\Phi_1(-u\nu )-e^{-u\nu x} \Phi_0(-u\nu )\right)du,
\end{multline*}
where we defined $\gamma_0(u)$
$$
\gamma(u\nu)=|\Lambda^+(\nu u )| =
\frac{\pi} {\cos \frac \pi 2 \alpha}\nu^{\alpha-2}
\left|
  u  +  u^{\alpha-2}      e^{\frac {\pi} 2 i(1-\alpha)}
\right| := \frac{\pi} {\cos \frac \pi 2 \alpha}\nu^{\alpha-2} \gamma_0(u).
$$
Let us now compute the residues, using the  formula \eqref{Lz}:
\begin{align*}
&
\Res\big(f_2,z_0\big) =
\frac{e^{z_0x} \Phi_0(z_0)}{\Lambda'(z_0)} =
e^{i\nu x} \Phi_0(i\nu)\nu^{3-\alpha} \frac {\cos \frac \pi 2 \alpha}{\pi} \frac 1{3-\alpha}
\\
&
\Res\big(f_2,-z_0\big) =
e^{-z_0x} \Phi_0(-z_0) \frac{1}{\Lambda'(-z_0)} = e^{-i\nu x} \Phi_0(-i\nu )\nu^{3-\alpha}
 \frac{\cos \frac \pi 2 \alpha}{\pi} \frac 1{3  -\alpha}.
\end{align*}
Hence 
\begin{align*}
\Res\big(f_2,z_0\big)+\Res\big(f_2,-z_0\big) =\;
&
\nu^{3-\alpha} \frac {\cos \frac \pi 2 \alpha}{\pi} \frac 1{3-\alpha}\Big(e^{i\nu x} \Phi_0(i\nu)+e^{-i\nu x} \Phi_0(-i\nu )\Big)= \\
&
\nu^{3-\alpha} \frac {\cos \frac \pi 2 \alpha}{\pi} \frac 2{3-\alpha} \mathrm{Re}\Big\{e^{i\nu x} \Phi_0(i\nu) \Big\}.
\end{align*}
Assembling all parts together, we obtain \eqref{phinu}. The formulas \eqref{intphi} follow from \eqref{intphilim}-\eqref{philim} and the definition \eqref{nufla}.

\end{proof}

\subsubsection{Asymptotic analysis}

The following lemma derives asymptotics for the algebraic part of solutions of the system from Lemma \ref{lemia}:

\begin{lem}\label{cor}
The integro-algebraic system \eqref{pq} and \eqref{alg} has countably many solutions, which can be enumerated so that
\begin{equation}
\label{ournu}
\nu_n = \pi \Big(n+\frac 1 2\Big) -\frac{1-\alpha}{4}\pi + \arcsin\frac{b_\alpha}{\sqrt{1+b_\alpha^2}} + n^{-1} r_n(\alpha), \quad n\to\infty,
\end{equation}
where $b_\alpha$ is defined in \eqref{thetaint} and the residual $r_n(\alpha)$ is bounded, uniformly in $n$ and $\alpha\in [\alpha_0,1]$ for any $\alpha_0\in (0,1)$.
\end{lem}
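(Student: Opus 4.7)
The plan is to turn the algebraic constraint \eqref{alg} into a perturbed sine equation in $\nu$ and locate its roots by a Rouché / intermediate value argument. Using the bounds of Lemma \ref{lem-bnds}, namely $a_+(\pm i)=2+O(\nu^{-1})$, $b_+(\mp i)=\mp 2i+O(\nu^{-2})$, and $a_-(\pm i),b_-(\pm i)=O(\nu^{-1})$, the definitions \eqref{xieta} reduce to
$$\xi = -2(i+b_\alpha)\,e^{i\nu/2}X_0(i)+O(\nu^{-1}),\qquad \eta = 2\,e^{-i\nu/2}X_0(-i)+O(\nu^{-1}),$$
with residuals uniform in $\alpha\in[\alpha_0,1]$. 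Since $X_0$ is defined by a real kernel one has $\overline{X_0(-i)}=X_0(i)$; combining this with Lemma \ref{lemX0} (which gives $X_0(i)^{2}=\tfrac{3-\alpha}{2}\,e^{i(1-\alpha)\pi/4}$) and the polar form $i+b_\alpha=\sqrt{1+b_\alpha^{2}}\,\exp\!\bigl(i(\tfrac{\pi}{2}-\psi_\alpha)\bigr)$ with $\psi_\alpha:=\arcsin\tfrac{b_\alpha}{\sqrt{1+b_\alpha^{2}}}$ yields
$$\Im\{\xi\bar\eta\} \;=\; -2(3-\alpha)\sqrt{1+b_\alpha^{2}}\;\sin\!\bigl(\nu+\phi_\alpha\bigr)\;+\;G_\alpha(\nu),$$
where $\phi_\alpha:=\tfrac{1-\alpha}{4}\pi+\tfrac{\pi}{2}-\psi_\alpha$ and $|G_\alpha(\nu)|\le C(\alpha_0)/\nu$.

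The algebraic equation \eqref{alg} is therefore equivalent to $\sin(\nu+\phi_\alpha)=O(\nu^{-1})$. For each sufficiently large integer $m$, set $\nu_m^{(0)}:=m\pi-\phi_\alpha$; on $[\nu_m^{(0)}-\tfrac{\pi}{2},\nu_m^{(0)}+\tfrac{\pi}{2}]$ the unperturbed term $|\sin(\nu+\phi_\alpha)|$ grows linearly in $|\nu-\nu_m^{(0)}|$, so the uniform remainder bound produces exactly one root $\nu_m$ in this interval, at distance $O(m^{-1})$ from $\nu_m^{(0)}$, uniformly in $\alpha\in[\alpha_0,1]$. The re-indexing $m=n+1$ turns this into \eqref{ournu}. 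A direct compactness/continuity argument shows that $\Im\{\xi\bar\eta\}$ is real-analytic in $\nu\in(0,\infty)$ and has finitely many zeros in any bounded subinterval; together with the asymptotic parametrization this shows that the complete set of solutions is countable.

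The principal technical issue is obtaining both the uniformity in $\alpha$ and the Lipschitz control in $\nu$ that are needed to upgrade ``$\sin(\nu+\phi_\alpha)=O(\nu^{-1})$'' into ``unique root within $O(m^{-1})$.'' Uniformity is inherited from Lemmas \ref{lem-A} and \ref{lem-bnds}, whose constants depend only on $\alpha_0$. For the Lipschitz property, I differentiate the defining equations \eqref{pq} in $\nu$: the resulting equations for $\partial_\nu p_\pm,\partial_\nu q_\pm$ are of the same contractive form, with forcing controlled by the factor $s\,e^{-s\nu}$ produced by the kernel's derivative. Repeating the proof of Lemma \ref{lem-bnds} for these derivative equations then gives $\partial_\nu a_\pm(\pm i)=O(\nu^{-2})$ and $\partial_\nu b_\pm(\pm i)=O(\nu^{-2})$, so that $G_\alpha$ is Lipschitz in $\nu$ with constant $O(\nu^{-1})$. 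This is exactly what is needed to close the Rouché step and to ensure uniqueness of the root in each interval.
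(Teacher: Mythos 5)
Your proof is correct and, after unwinding the notation, matches the paper's own argument: the leading-order reduction of $\xi\bar\eta$ via Lemma~\ref{lem-bnds} and Lemma~\ref{lemX0}, the identification of the phase constant $\phi_\alpha$ (with $\arg\{i+b_\alpha\}=\tfrac{\pi}{2}-\arcsin\tfrac{b_\alpha}{\sqrt{1+b_\alpha^2}}$), the uniform $O(\nu^{-1})$ bound on the remainder, and the need for a derivative-in-$\nu$ estimate to pin down uniqueness of the root in each period. The only genuine difference is packaging: the paper writes \eqref{alg} as a phase equation $\nu+\text{const}+\arctan(\cdot)=\pi n$ and invokes a joint fixed-point/contraction argument (using $|R'(\nu)|\lesssim\nu^{-1}$ obtained ``as in Lemma~\ref{lem-bnds}''), whereas you keep the condition as a perturbed sine equation and localize roots by an intermediate-value/Rouch\'e step, getting the Lipschitz control on $G_\alpha$ by differentiating \eqref{pq} in $\nu$. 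These are equivalent in substance; your version is arguably more explicit about where the derivative bound is used, which is the step the paper only sketches. One small cosmetic point: on the full half-period interval $[\nu_m^{(0)}-\tfrac{\pi}{2},\nu_m^{(0)}+\tfrac{\pi}{2}]$ the sine does not literally ``grow linearly,'' but Jordan's inequality $|\sin(\nu+\phi_\alpha)|\ge\tfrac{2}{\pi}|\nu-\nu_m^{(0)}|$ gives exactly what you need, so the argument stands.
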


\begin{proof}
Let us rewrite \eqref{xieta} as
\begin{align*}
&
\xi :=   e^{ i\nu/2} X_0(i)\Big(b_+(-i)-b_\alpha a_+(-i) \Big)
\left(
1+e^{-i\nu }\frac{ X_0(-i) \big(b_-(i)-b_\alpha a_-(i)\big)}
{  X_0(i)\big(b_+(-i)-b_\alpha a_+(-i) \big)} 
\right)
\\
&
\overline{\eta}:=  e^{i\nu/2}X_0(i) a_+(-i)\left(1+e^{-i\nu}\frac{X_0(-i) a_-(i) } {X_0(i) a_+(-i)} \right)
\end{align*}
Then  by Lemma  \ref{lem-bnds} and Lemma \ref{lemX0}
$$
\xi \overline{\eta} =  4 \frac{3-\alpha }{ 2  } \sqrt{1+b_\alpha^2} \exp \left\{i\left(\nu  +  \frac {1-\alpha}{ 4}  \pi -\pi + \arg\{i+b_\alpha\} \right)\right\}\Big(1+R(\nu)\Big)
$$
where $|R(\nu)|\le C_1\nu^{-1}$ with a constant $C_1$, depending only on $\alpha_0$. Consequently  the equation \eqref{alg} becomes
\begin{equation}
\label{nueqn}
\nu  +  \frac {1-\alpha}{ 4}  \pi  - \pi + \arg\{i+b_\alpha\}   - \pi n  + \arctan \frac{\mathrm{Im}\{R(\nu)\}}{1+\mathrm{Re}\{R(\nu)\}}=0, \quad n\in \mathbb{Z}.
\end{equation}
This defines a particular enumeration of {\em all possible} solutions $\big(p_{\pm n}, q_{\pm n},\nu_n\big)$ of the integro-algebraic system.

Calculations similar to those in the proof of Lemma \ref{lem-bnds} also show that  $|R'(\nu)|\le C_2 \nu^{-1}$ with a
constant $C_2$ and in view of Lemma \ref{lem-A}, the integro-algebraic system is contracting for all positive $n$ large enough, so that the unique solution for such $n$ is given by the fixed-point iterations.
The asymptotics \eqref{ournu} follows from  \eqref{nueqn}.
\end{proof}

Let us now derive asymptotic approximation for the eigenfunctions.
To this end, under the enumeration fixed in Lemma \ref{cor},
$$
\frac{\xi}{\eta}=\frac{\xi\bar \eta}{|\eta|^2} = (-1)^n \frac{|\xi|}{|\eta|} = (-1)^n \sqrt{1+b_\alpha^2}\big(1+O(\nu_n^{-1})\big),
\quad n\to \infty
$$
where the second equality holds by condition \eqref{alg} and the choice of $n$ in \eqref{nueqn}
and the asymptotics is obtained, applying the estimates \eqref{AAi} to the definitions  \eqref{xieta}.
Plugging \eqref{Phiz} into \eqref{phinu} and using the bounds from Lemma \ref{lem-bnds}, we obtain
\begin{align*}
&
\varphi_n(x)
=\; 
  \frac 4{3-\alpha} \Re\Big\{e^{i\nu_n x} X_0(i) (i+b_\alpha) \Big\} \\
& +  \frac 2 { \pi }  \int_0^{\infty}    \frac{ \sin \theta_0(u)}{\gamma_0(u)} X_0(-u)
\Big(
-e^{-u\nu_n x} (u-b_\alpha)-
(-1)^{n} \sqrt{1+b_\alpha^2}  e^{-u\nu_n (1-x)}\
\Big)du + \nu_n^{-1} \widetilde r_n(x),
\end{align*}
with a uniformly bounded residual $\widetilde r_n(x)$.
By Lemma \ref{lemX0}
$$
\Re\Big\{e^{i\nu_n x} X_0(i) (i+b_\alpha) \Big\}= 
\sqrt{1+b_\alpha^2}\sqrt{\frac{3-\alpha }{ 2  }}
\cos\Big(\nu_n x+\frac {1-\alpha}{ 8}  \pi+\arg\{i+b_\alpha\}\Big)
$$
and therefore
\begin{multline}
\label{varphin}
\varphi_n(x)
\propto 
\sqrt 2 \cos\Big(\nu_n x+\frac {1-\alpha}{ 8}  \pi+\arg\{i+b_\alpha\}\Big) \\
 + \frac {\sqrt{3-\alpha}} { \pi }  \int_0^{\infty}    \rho_0(u)
\Big(
-e^{-u\nu_n x} \frac{u-b_\alpha}{ \sqrt{1+b_\alpha^2}}-
(-1)^{n}   e^{-u\nu_n (1-x)}\
\Big)du + \nu_n^{-1} \widetilde r_n(x),
\end{multline}
where we defined
$$
\rho_0(u)= \dfrac{ \sin \theta_0(u)}{\gamma_0(u)} X_0(-u)  >0.
$$
Note that the integral in \eqref{varphin} is well defined for all $x\in [0,1]$ and $\alpha\in (0,1)$ since
$\rho_0(u) \sim u^{\frac 1 2-\frac \alpha 2 }$ as $u\to 0$ and $\rho_0(u)\sim u^{\alpha-2}$ as $u\to\infty$.

The $L^2(0,1)$ norm of the boundary layer term in \eqref{varphin} vanishes asymptotically
\begin{align*}
&
\int_0^1 \left(\int_0^{\infty}     \rho_0(u)\left(e^{-u\nu_n (1-x)} (-1)^{n+1} +e^{-u\nu_n x}   \frac{u-b_\alpha}{ \sqrt{1+b_\alpha^2}}\right)du\right)^2 dx \le \\
&
\left(\int_0^{\infty}      \rho_0(u)
\left(\int_0^1  \left(e^{-u\nu_n (1-x)}   +e^{-u\nu_n x}    \frac{u-b_\alpha}{ \sqrt{1+b_\alpha^2}}\right)^2dx\right)^{1/2}  du \right)^2 \le \\
&
4\nu_n^{-1}\left(\int_0^{\infty}      \rho_0(u)
\sqrt{u^{-1}+u\,} du \right)^2,
\end{align*}
and hence the norms of eigenfunctions in \eqref{phin} are asymptotic to 1 as $n\to\infty$.
The corresponding normalization of the expressions in \eqref{intphi} yields the formulas
\begin{equation}\label{psipsi}
\int_0^1 \varphi_n(x)dx  = - \sqrt{\frac{ 3-\alpha}{1+b_\alpha^2} }\; \nu_n^{-1} \quad \text{and}\quad
\varphi_n(1) =   \sqrt{ 3-\alpha }\; (-1)^{n-1}  \big( 1 +O(\nu_n^{-1})\big).
\end{equation}

\subsubsection{Enumeration alignment}\label{sec-cal}

Let us denote by $\tilde \lambda_n$, $n=1,2,...$ the sequence of the eigenvalues, put in the non-increasing order, that is,  $\tilde \lambda_1\ge \tilde \lambda_{2}\ge ...$,
or equivalently, $\tilde \nu_{1}\le \tilde \nu_2\le ...$. This enumeration, to which we refer as {\em natural},  does not necessarily coincide
with the enumeration, introduced in Lemma \ref{cor}.

Lemma \ref{cor} guarantees existence of $\nu_n$ only for all $n$ large enough.
In our enumeration $\nu_n$ ceases to be positive if $n$ is too small and form a strictly increasing sequence for all
$n$ large enough. Moreover, since the only accumulation point of the eigenvalues is the origin, there can be only finitely many
$\nu_n$'s on any bounded interval.
Therefore the two enumerations can differ only by a constant shift, possibly dependent
on $\alpha$, that is, $\tilde \lambda_n := \lambda_{n+k_\alpha}$ with an integer valued function $\alpha\mapsto k_\alpha$.
Hence
$$
\tilde \nu_n = \nu_{n+k_\alpha} = \pi \Big(n+\frac 1 2\Big) -\frac{1-\alpha}{4}\pi + \frac \pi 2 - \arg\{i+b_\alpha\} + k_\alpha \pi+ n^{-1} r_n(\alpha),\quad n\to\infty,
$$
where, by Lemma \ref{cor}, the residual term $r_n(\alpha)$ is bounded, uniformly in $\alpha\in [\alpha_0,1]$ and $n$ for any $\alpha_0\in (0,1)$.
Thus the leading order asymptotic term is not affected under passing to the natural enumeration, while the second order term
may change by an integer multiple of $\pi$.

This uncertainty can be eliminated using continuity of the spectrum with respect to the parameter $\alpha$
and the exact formula \eqref{Bmlambda}, corresponding to $\alpha=1$.
Taking into account the definition of   $b_\alpha$ in \eqref{thetaint},
for a fixed $\alpha'\in (\alpha_0, 1]$ 
$$
|k_{\alpha'}-k_\alpha| \le  |\tilde \nu_n (\alpha') -\tilde \nu_n (\alpha)| + 2|\alpha'-\alpha| +   n^{-1} \big(|r_n(\alpha)|+|r_n(\alpha')|\big).
$$
The last term on the right hand side can be made smaller than any $\epsilon>0$ uniformly over $\alpha,\alpha'\in [\alpha_0, 1]$ by choosing $n$ large enough.
By Theorem 1.14 in \cite{Kato} (see also Theorem 3 in \cite{R51}), $\tilde \nu_n(\alpha)$ is continuous for any fixed $n$ and therefore each of the other two terms
is smaller than $\epsilon$ on an open neighborhood of $\alpha'$. Therefore on this neighborhood $|k_{\alpha'}-k_\alpha|\le 3\epsilon$. Consequently $k_\alpha$ cannot have jumps
on the interval  $(\alpha_0,1]$, and in fact, on $(0,1]$ since $\alpha_0\in (0,1)$. Thus $k_\alpha$ is constant with respect
to $\alpha$ and therefore its value must equal $k_1$. In view of \eqref{Bmlambda}, $k_1=-1$
and hence the two enumerations are related by the formula $\tilde \nu_n=\nu_{n-1}$, $n=1,2,...$, that is,

$$
\tilde{\nu}_n = \pi \Big(n-\frac 1 2\Big) -\frac{1-\alpha}{4}\pi + \frac \pi 2 - \arg\{i+b_\alpha\} 
  + O(n^{-1}), \quad n\to\infty.
$$

This gives the eigenvalues asymptotics, claimed in \eqref{lambda}, after replacing $\alpha$ with $2-2H$
and omitting tilde from the notation:
\begin{align*}
\lambda_n  =
&
\nu_n^{\alpha-3} \frac{c_\alpha} {  \Gamma (\alpha)} \frac{\pi}{\cos \frac \pi 2 \alpha}=
%
\Gamma(2H+1) \sin (\pi H)\nu_n^{-1-2H}
\end{align*}
where we used the well known properties of the $\Gamma$ function.

\medskip

Similarly, under the natural enumeration, the formula \eqref{varphin} becomes
\begin{multline*}
\tilde \varphi_n(x) = \varphi_{n-1}(x) 
= 
\sqrt 2 \cos\Big(\tilde \nu_{n} x+\frac {1-\alpha}{ 8}  \pi+\arg\{i+b_\alpha\}\Big) \\
 + \frac {\sqrt{3-\alpha}} { \pi }  \int_0^{\infty}    \rho_0(u)
\Big(
-e^{-u\tilde \nu_n x} \frac{u-b_\alpha}{ \sqrt{1+b_\alpha^2}}-
(-1)^{n-1}   e^{-u\tilde \nu_n (1-x)}\
\Big)du + \nu_n^{-1} \widetilde r_n(x).
\end{multline*}
Replacing $\alpha$ with $2-2H$, setting $\ell_H:=b_{2-2H}$ and removing tilde from the notation,  \eqref{phin} is obtained with
\begin{equation}
\label{rhofla}
\begin{aligned}
&
\rho_0(u)= 
\frac{\sin \theta_0(u)}{\gamma_0(u)}\exp \left\{\frac 1 {\pi} \int_0^\infty \frac{\theta_0(v)}{v+u}dv\right\} \\
&
\theta_0(u) = -\arctan \frac{\cos (\pi H) }{u^{2H+1}+\sin (\pi H)}\\
&
\gamma_0^2(u) = \Big(u+u^{-2H} \sin (\pi H)\Big)^2+ \Big(u^{-2H}\cos(\pi H)\Big)^2
\end{aligned}
\end{equation}
and the formulas \eqref{bndp} and \eqref{ave} follow from \eqref{psipsi} by the corresponding normalization.
%
%

\subsection{The case $\alpha>1$}\label{sec-case2}

Our main representation formula for the Laplace transform of $\varphi$ in Lemma \ref{lem-main} was derived using the fact that
derivative and integration in \eqref{interch} are interchangeable. For $\alpha>1$ this is no longer
possible and, though the same approach still works, some of the calculations are made differently.

\subsubsection{The Laplace transform}

\medskip
The following representation is the analog of Lemma \ref{lem-main}:

\begin{lem}\label{lem510}
For  $\alpha  \in (1,2)$,
$$
\widehat \varphi(z) - \widehat \varphi(0) =-\frac 1{\Lambda(z)}\Big(e^{-z}\Phi_1(-z)+\Phi_0(z)\Big),\quad z\in \mathbb{C}
$$
where the functions $\Phi_0(z)$ and $\Phi_1(z)$, defined in  \eqref{Phi0Phi1case2} below,
are sectionally holomorphic on $\mathbb{C}\setminus \Real_{\ge 0}$, and
\begin{equation}
\label{Lmbd2}
\Lambda(z):=    z \frac{\lambda\Gamma(\alpha)}{|c_\alpha|}   -  z\int_0^\infty\frac{2 t^{\alpha-2}}{t^2-z^2}dt.
\end{equation}

\end{lem}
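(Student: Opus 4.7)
The plan is to mimic the proof of Lemma \ref{lem-main}, but to avoid the second differentiation step leading from \eqref{1diff} to \eqref{interch}, which is not justified for $\alpha > 1$ because the kernel $|x-y|^{-\alpha}$ then ceases to be locally integrable. Instead, I would work directly with the first-derivative equation \eqref{1diff} and substitute the integral representation
$$
|x-y|^{1-\alpha} = \frac{1}{\Gamma(\alpha-1)}\int_0^\infty t^{\alpha-2}e^{-t|x-y|}\,dt, \qquad \alpha\in(1,2),
$$
introducing the function $v(x,t):=\int_0^1 \sign(x-y)\,e^{-t|x-y|}\psi(y)\,dy$. Using the identity $(1-\alpha/2)/\Gamma(\alpha-1)=|c_\alpha|/\Gamma(\alpha)$, equation \eqref{1diff} becomes
$$
\frac{\Gamma(\alpha)}{|c_\alpha|}\big(\lambda\psi'(x)+\widetilde C\big) = -\int_0^\infty t^{\alpha-2}v(x,t)\,dt =: -v_0(x),
$$
where $\widetilde C:=(1-\alpha/2)\int_0^1 y^{1-\alpha}\psi(y)\,dy$ is a constant.

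Next, I would observe that $v$ and $u$ (the function defined in the proof of Lemma \ref{lem-main}) are related by $v(x,t)=-u'(x,t)/t$, while $u$ still satisfies the ODE \eqref{uttag} with the boundary conditions \eqref{bnccnd}, so the formula for $\widehat u(z,t)$ carries over unchanged. Consequently $\widehat v(z,t) = -t^{-1}[e^{-z}u(1,t)-u(0,t)+z\widehat u(z,t)]$, and the identity $\frac{z}{z\mp t}=1\pm\frac{t}{z\mp t}$ produces the cancellation of the $t^{-1}$ singularity at the origin, yielding
$$
\widehat v_0(z) = \int_0^\infty \frac{2zt^{\alpha-2}}{z^2-t^2}\,\widehat\psi(z)\,dt - \int_0^\infty\frac{t^{\alpha-2}u(0,t)}{z-t}\,dt - e^{-z}\int_0^\infty\frac{t^{\alpha-2}u(1,t)}{z+t}\,dt.
$$

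Substituting this into the Laplace-transformed equation and collecting $\widehat\psi$ on the left gives $\Lambda(z)\widehat\psi(z)$ with $\Lambda$ as in \eqref{Lmbd2}. Applying $z\widehat\psi(z)=\widehat\varphi(0)-\widehat\varphi(z)$ and multiplying through by $z$ expresses the right-hand side as $-(e^{-z}\Phi_1(-z)+\Phi_0(z))$, provided one uses $\frac{z}{z\mp t}=1\pm\frac{t}{z\mp t}$ once more to split each Cauchy integral into a constant term plus an integral of Cauchy type with weight $t^{\alpha-1}$. The sectionally holomorphic functions then take the form
\begin{align*}
\Phi_0(z) &:= \frac{\lambda\Gamma(\alpha)}{|c_\alpha|}\psi(0)\,z - \frac{\Gamma(\alpha)}{|c_\alpha|}\widetilde C + \int_0^\infty t^{\alpha-2}u(0,t)\,dt - \int_0^\infty\frac{t^{\alpha-1}u(0,t)}{t-z}\,dt,\\
\Phi_1(z) &:= \phantom{\frac{\lambda\Gamma(\alpha)}{|c_\alpha|}\psi(0)\,z\; -}\frac{\Gamma(\alpha)}{|c_\alpha|}\widetilde C + \int_0^\infty t^{\alpha-2}u(1,t)\,dt - \int_0^\infty\frac{t^{\alpha-1}u(1,t)}{t-z}\,dt.
\end{align*}

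The main obstacle is the bookkeeping of integrability and the interchange of integrations: near $t=0$ the integrands $t^{\alpha-2}u(j,t)$ are integrable because $u(j,t)\to\psi(0)$ and $\alpha>1$; at infinity they are integrable because $u(j,t)=O(t^{-1})$ for bounded $\psi$; and the Cauchy integrals of $t^{\alpha-1}u(j,t)$ converge at both ends for $\alpha\in(1,2)$. The cancellation of the $t^{-1}$ singularity in $t^{-1}[e^{-z}u(1,t)-u(0,t)+z\widehat u(z,t)]$ near $t=0$, which hinges on the boundary data built into $u$, is the key algebraic fact that makes $\widehat v_0(z)$ well-defined and gives the lemma.
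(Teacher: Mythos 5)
Your proof is correct and reaches the lemma by a genuinely different route. The paper differentiates \eqref{1diff} once more, keeping $d/dx$ outside the integral (equation \eqref{geig2}), introduces the \emph{signed} kernel function $u(x,t)=\int_0^1\sign(x-y)\,e^{-t|x-y|}\psi(y)\,dy$, re-derives its ODE $u''=2\psi'+t^2u$ with boundary conditions that pick up the extra term $2\psi(0)$ at $x=0$, and then works from the Laplace transform of $u_0'(x)=-\lambda\tfrac{\Gamma(\alpha-1)}{1-\alpha/2}\psi''(x)$; the boundary terms $u_0(0),u_0(1)$ appearing in $\widehat{u_0'}(z)$ are subsequently absorbed into the Cauchy integrals via $z/(z\mp t)=1\pm t/(z\mp t)$. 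You instead stay at the first-order identity \eqref{1diff}, split off the constant $\widetilde C$, and recycle the unsigned $u$ of Lemma \ref{lem-main} together with its ODE \eqref{uttag} and boundary conditions \eqref{bnccnd} through the observation $v=-u'/t$ — this saves re-deriving the signed ODE and is where your approach genuinely diverges from the paper's; the constant $\widetilde C$ then plays the role that $u_0(0),u_0(1)$ play in the paper. The only thing you should add is the final simplification that brings your $\Phi_0,\Phi_1$ to the exact form \eqref{Phi0Phi1case2}: since $\tfrac{\Gamma(\alpha)}{|c_\alpha|}\bigl(1-\tfrac\alpha 2\bigr)=\Gamma(\alpha-1)$, one has
$$
\int_0^\infty t^{\alpha-2}u(0,t)\,dt=\Gamma(\alpha-1)\int_0^1 y^{1-\alpha}\psi(y)\,dy=\frac{\Gamma(\alpha)}{|c_\alpha|}\widetilde C,
$$
so the two extra constants in your $\Phi_0$ cancel, and evaluating \eqref{1diff} at $x=1$ gives
$$
\frac{\Gamma(\alpha)}{|c_\alpha|}\widetilde C+\int_0^\infty t^{\alpha-2}u(1,t)\,dt=\Gamma(\alpha-1)\int_0^1\bigl(y^{1-\alpha}+(1-y)^{1-\alpha}\bigr)\psi(y)\,dy=-\lambda\frac{\Gamma(\alpha)}{|c_\alpha|}\psi'(1),
$$
matching the constant in the paper's $\Phi_1$. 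After these identities your $\Phi_j$ coincide with \eqref{Phi0Phi1case2} (modulo the benign sign flip from the signed versus unsigned $u(0,t)$), and the proof is complete.
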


\begin{proof}

The eigenproblem \eqref{eig} can still be rewritten as \eqref{1diff}, however this time the derivative and integration can no longer be
interchanged and we get (c.f. \eqref{geig})
\begin{equation}
\label{geig2}
\begin{aligned}
\frac{d}{dx} &  \int_0^1   (1-\tfrac \alpha 2) \sign(x-y) |x-y|^{1-\alpha}  \psi(y)dy=-\lambda \psi''(x)  \\
&
\psi(1)=0, \; \psi'(0)=0
\end{aligned}
\end{equation}
Using the identity (c.f. \eqref{idnt})
$$
|x-x'|^{-(\alpha-1)} = \frac 1 {\Gamma (\alpha-1)} \int_0^\infty t^{\alpha-2}e^{-t|x-x'|}dt,\quad \alpha\in (1,2)
$$
we can rewrite the left hand side of \eqref{geig2} as
\begin{multline*}
\frac {d}{dx}\int_0^1  (1-\tfrac \alpha 2)  \sign(x-y) |x-y|^{-(\alpha-1)}  \psi(y)dy = \\
  \frac  {1-\frac \alpha 2} {\Gamma (\alpha-1)} \frac {d}{dx}\int_0^\infty t^{\alpha-2} \left(\int_0^1    \sign(x-y)
 e^{-t|x-y|}
 \psi(y)dy\right) dt.
\end{multline*}
If we define
$$
u(x,t) :=  \int_0^1 \sign(x-y)   e^{-t|x-y|} \psi(y) dy  \quad \text{and}\quad
u_0(x):=  \int_0^\infty t^{\alpha-2}u(x,t) dt,
$$
the equation \eqref{geig2} reads
\begin{equation}
\label{u0tag}
u'_0(x) = -\lambda \frac{\Gamma(\alpha-1)}{1-\frac \alpha 2} \psi''(x),
\end{equation}
and  $u(x,t)$ solves the equation
$$
u''(x,t) = 2\psi'(x) +t^2 u(x,t),
$$
subject to boundary conditions
\begin{align*}
& u'(0,t) = \phantom{+} t u(0,t)+2\psi(0)\\
& u'(1,t) = -t u(1,t).
\end{align*}

On the other hand, the Laplace transform $\widehat u''(z,t)$ satisfies
\begin{align*}
\widehat u''(z,t) =\;
&
e^{-z} u'(1,t)-u'(0,t) +   ze^{-z} u(1,t)-z u(0,t)+z^2 \widehat u(z,t) =\\
&
e^{-z} (z -t)u(1,t) -(z+t) u(0,t)   -2\psi(0)+z^2 \widehat u(z,t),
\end{align*}
and therefore
$$
(z^2-t^2) \widehat u(z,t)=
-e^{-z}(z -t) u(1,t) +(z+t) u(0,t) +  2\big(\widehat \psi'(z)  +\psi(0)\big).
$$
Multiplying both sides by $t^{\alpha-2}/(z^2-t^2)$ and integrating we obtain
$$
 \widehat u_0(z)=
-e^{-z}\int_0^\infty\frac{t^{\alpha-2}}{z+t} u(1,t)dt +  \int_0^\infty \frac{t^{\alpha-2}}{z-t}u(0,t)dt +  2\big(\widehat \psi'(z)  +\psi(0)\big) \int_0^\infty\frac{t^{\alpha-2}}{z^2-t^2}dt,
$$
and since $\widehat u'_0(z) = e^{-z} u_0(1)-u_0(0)+z \widehat u_0(z)$,
\begin{align*}
\widehat u'_0(z) =&\; e^{-z} u_0(1)-u_0(0)  \\
&
  -ze^{-z}\int_0^\infty\frac{t^{\alpha-2}}{z+t} u(1,t)dt
+z\int_0^\infty \frac{t^{\alpha-2}}{z-t}u(0,t)dt +  z^2 \widehat \psi(z) \int_0^\infty\frac{2t^{\alpha-2}}{z^2-t^2}dt \\
=\; &
e^{-z}   \int_0^\infty\frac{t^{\alpha-1}}{z+t} u(1,t)dt  +
  \int_0^\infty \frac{t^{\alpha-1}}{z-t}u(0,t)dt +  z^2 \widehat \psi(z) \int_0^\infty\frac{2t^{\alpha-2}}{z^2-t^2}dt.
\end{align*}
On the other hand, by \eqref{u0tag}
$$
\frac 1 \lambda  \frac{ 1-\frac \alpha 2} {\Gamma(\alpha-1)} \widehat u'_0(z) =
- \widehat \psi''(z) =
-  e^{-z} \psi'(1) -  z \widehat \psi'(z)=
-  e^{-z} \psi'(1) +  z  \psi(0) -  z^2 \widehat \psi(z).
$$
Combining the two expressions and noting that
$\frac {\Gamma (\alpha-1)} {1-\frac \alpha 2}  =-\frac {\Gamma (\alpha)} {(1-\frac \alpha 2)(1-\alpha)}  =  \frac {\Gamma (\alpha)} {|c_\alpha|} $, we get
\begin{multline*}
  - \Big(  \lambda    \frac {\Gamma (\alpha)} {|c_\alpha|} z
 +z \int_0^\infty\frac{2t^{\alpha-2}}{z^2-t^2}dt\Big)z  \widehat \psi(z) =\\
  \lambda  \frac {\Gamma (\alpha)} {|c_\alpha|} e^{-z} \psi'(1)
  +
e^{-z}   \int_0^\infty\frac{t^{\alpha-1}}{t+z} u(1,t)dt
 - \lambda   \frac {\Gamma (\alpha)} {|c_\alpha|}z  \psi(0)
     -
  \int_0^\infty \frac{t^{\alpha-1}}{t-z}u(0,t)dt.
\end{multline*}
Since $z  \widehat \psi(z)=-\widehat \varphi(z)+\widehat \varphi(0)$, the equation \eqref{main} is obtained, if we define $\Lambda(z)$ as in \eqref{Lmbd2} and
set
\begin{equation}
\label{Phi0Phi1case2}
\begin{aligned}
& \Phi_0(z):=  \phantom{+}\lambda   \frac {\Gamma (\alpha)} {|c_\alpha|}   \psi(0)z
     +
  \int_0^\infty \frac{t^{\alpha-1}}{t-z}u(0,t)dt\\
& \Phi_1(z):=  -\lambda   \frac {\Gamma (\alpha)} {|c_\alpha|} \psi'(1)
  -
   \int_0^\infty\frac{t^{\alpha-1}}{t-z} u(1,t)dt
\end{aligned}
\end{equation}

\end{proof}

While the expressions for $\Phi_0(z)$ and $\Phi_1(z)$ in \eqref{Phi0Phi1case2} coincide with those in \eqref{Phi0Phi1}, 
the growth of these functions near the origin and at infinity is different for $\alpha>1$ and $\alpha<1$. Nevertheless, 
all further calculations can be carried out as before.   
Using the identity \eqref{Lint} with $\alpha$ replaced by $\alpha-2$, we find that
$\Lambda(z)$ in this case is given by the formula
$$
\Lambda(z)=     \lambda \frac {\Gamma (\alpha)} {|c_\alpha|} z  +     z^{\alpha-2}  \frac {\pi }{  |\cos \frac {\pi } 2  \alpha | } \begin{cases}
e^{\frac {1-\alpha} 2\pi i  } & \arg(z)\in (0,\pi) \\
 e^{  -\frac {1-\alpha} 2\pi i } & \arg(z)\in (-\pi, 0)
\end{cases}
$$
which basically coincides with \eqref{Lz}. Consequently, the rest of the proof and all the results remain intact, after replacing $c_\alpha$ and $\cos\frac \pi 2 \alpha$
with their absolute values.

\section{Proof of Theorem \ref{main-thm-fbn}}

In this section we give the details for the steps outlined in Section \ref{sec-strat} for the eigenproblem with the
covariance operator \eqref{KfBn}:
\begin{equation}
\label{eigen}
\frac {d}{dx} \int_{0}^1    C_\alpha |x-y|^{1-\alpha} \sign(x-y) \varphi(y)dy = \lambda \varphi(x), \quad x\in [0,1],
\end{equation}
where $C_\alpha = 1-\tfrac \alpha 2$ and $\alpha = 2-2H\in (0,2)\setminus\{1\}$ as before.
The exact second order asymptotics of the eigenvalues was obtained in \cite{Ukai} for $\alpha\in (0,1)$.
We will derive the formulas \eqref{lambdaU} and \eqref{phin1} in the complementary case $\alpha\in (1,2)$, leaving out
similar derivation of the eigenfunctions asymptotics for $\alpha\in (0,1)$.



\subsection{The case $\alpha>1$}

\subsubsection{The Laplace transform}

The first step is to find a useful representation for the Laplace transform \eqref{LT}:

\begin{lem} \label{lem61}
For $\alpha \in (1,2)$
\begin{equation}\label{hatvarphi}
\widehat {\varphi}(z)= \frac{\lambda^{-1}C_\alpha}{\Gamma(\alpha-1)} \frac{1}{\Lambda(z)}\Big(e^{-z} \Phi_{1}(-z)- \Phi_{0}(z)\Big),
\end{equation}
where functions $\Phi_0(z)$ and $\Phi_1(z)$, defined in \eqref{Psipm} below, are sectionally holomorphic
on $\mathbb{C} \setminus \Real_{\ge 0}$ and
\begin{equation}
\label{Lambda}
\Lambda(z) := 1-\frac{2\lambda^{-1}C_\alpha}{\Gamma(\alpha-1)} z^2 \int_0^\infty \frac{t^{\alpha-2}}{z^2-t^2 }dt.
\end{equation}
\end{lem}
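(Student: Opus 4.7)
The strategy parallels the proofs of Lemmas \ref{lem-main} and \ref{lem510}: we will convert the eigenproblem \eqref{eigen} into an algebraic identity for $\widehat\varphi(z)$ by representing the weakly singular kernel as a Laplace integral, and then solving an auxiliary second order ODE. Since $\alpha\in(1,2)$, the identity
$$
|x-y|^{1-\alpha}=\frac{1}{\Gamma(\alpha-1)}\int_0^\infty t^{\alpha-2}e^{-t|x-y|}\,dt
$$
is available. Substituting this into \eqref{eigen} and commuting the $x$-derivative with the $t$-integral (legitimate here because the resulting integrand decays in $t$), the eigenproblem becomes
$$
\lambda\varphi(x)=\frac{C_\alpha}{\Gamma(\alpha-1)}\int_0^\infty t^{\alpha-2}u'(x,t)\,dt,
\qquad
u(x,t):=\int_0^1 e^{-t|x-y|}\sign(x-y)\varphi(y)\,dy.
$$

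The next step is to obtain an ODE satisfied by $u(\cdot,t)$. A direct differentiation yields $u'(x,t)=2\varphi(x)-t\,v(x,t)$ and $v'(x,t)=-t\,u(x,t)$, with $v(x,t):=\int_0^1 e^{-t|x-y|}\varphi(y)dy$, and combining these gives $u''(x,t)-t^2u(x,t)=2\varphi'(x)$. Reading off the boundary values $u(0,t)=-\widehat\varphi(t)$, $u(1,t)=e^{-t}\widehat\varphi(-t)$, taking the $x$-Laplace transform of both the ODE and the relation $u'=2\varphi-tv$, and using $\widehat{\varphi'}(z)=e^{-z}\varphi(1)-\varphi(0)+z\widehat\varphi(z)$, one solves the resulting algebraic system for $\widehat u(z,t)$ in closed form and thereby for $\widehat{u'}(z,t)$ as a rational combination of $\widehat\varphi(z)$, $\widehat\varphi(t)$ and $e^{-t}\widehat\varphi(-t)$, with denominators $z^2-t^2$, $t-z$ and $t+z$.

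The final step is to multiply by $t^{\alpha-2}$ and integrate over $t\in(0,\infty)$. The terms proportional to $\widehat\varphi(z)$ collect, after an application of \eqref{Lint} with exponent $\alpha-2$, into $\Lambda(z)\widehat\varphi(z)$ as defined in \eqref{Lambda}. The two remaining terms have the shape of Cauchy-type integrals along $\Real_{\ge 0}$, and this is exactly the point at which we set
$$
\Phi_0(z):=\int_0^\infty\frac{t^{\alpha-1}\widehat\varphi(t)}{t-z}\,dt,
\qquad
\Phi_1(z):=\int_0^\infty\frac{t^{\alpha-1}e^{-t}\widehat\varphi(-t)}{t-z}\,dt,
$$
so that the piece coming from the second integral is precisely $e^{-z}\Phi_1(-z)$ while the first piece is $-\Phi_0(z)$. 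Rearranging yields \eqref{hatvarphi}. Sectional holomorphy of $\Phi_0, \Phi_1$ on $\mathbb{C}\setminus\Real_{\ge 0}$ follows from the Sokhotski--Plemelj construction once we verify that the densities are H\"older-continuous on $(0,\infty)$ with integrable behaviour at the endpoints; this uses $\widehat\varphi(t)=O(1/t)$ as $t\to+\infty$ (so that $t^{\alpha-1}\widehat\varphi(t)=O(t^{\alpha-2})$ is integrable since $\alpha<2$) and the exponential decay of $e^{-t}\widehat\varphi(-t)$ at infinity.

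The main obstacle is the same integrability subtlety that distinguishes the present case from Lemma \ref{lem-main}: because $\alpha>1$, the integrals $\int_0^\infty t^{\alpha-2}\varphi(x)\,dt$ and $\int_0^\infty t^{\alpha-1}v(x,t)\,dt$ individually diverge at infinity, so the decomposition $u'=2\varphi-tv$ cannot be split inside the $t$-integral. One has to keep $u'(x,t)$ (or, after Laplace transforming, $\widehat{u'}(z,t)$) together until the algebraic cancellations produce decompositions whose individual terms are absolutely convergent. Once this accounting is done carefully, the three pieces of $\widehat{u'}(z,t)$ combine into exactly the integrals defining $\Lambda(z)\widehat\varphi(z)$, $\Phi_0(z)$ and $\Phi_1(-z)$.
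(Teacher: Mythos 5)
Your overall strategy is the same as the paper's: represent $|x-y|^{1-\alpha}$ as a Laplace integral in $t$, derive an auxiliary second--order ODE in $x$ for $u(x,t)$, take the $x$--Laplace transform, and assemble the $t$--integral. The observation $u(0,t)=-\widehat\varphi(t)$, $u(1,t)=e^{-t}\widehat\varphi(-t)$ is a genuine simplification (the paper keeps $u(0,t),u(1,t)$ abstract in \eqref{Psipm}), and your remark that $u'=2\varphi-tv$ cannot be split under the $t$--integral because each piece diverges for $\alpha>1$ is correct and does explain why one must carry $\widehat{u'}(z,t)$ as a whole. Two things in the plan do not go through as written.

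\textbf{Sign.} Carrying out the algebra from $u''=2\varphi'+t^2u$ and the boundary data gives
$$
\widehat{u'}(z,t)\;=\;\frac{2z^2}{z^2-t^2}\,\widehat\varphi(z)\;+\;e^{-z}\,\frac{t\,e^{-t}\widehat\varphi(-t)}{z+t}\;+\;\frac{t\,\widehat\varphi(t)}{t-z},
$$
and after multiplying by $t^{\alpha-2}$ and integrating, the last piece is $\displaystyle\int_0^\infty\frac{t^{\alpha-1}\widehat\varphi(t)}{t-z}\,dt = +\,\Phi_0(z)$ in \emph{your} definition of $\Phi_0$. So with your $\Phi_0$ the identity reads $\widehat\varphi(z)=\frac{\lambda^{-1}C_\alpha}{\Gamma(\alpha-1)}\frac{1}{\Lambda(z)}\bigl(e^{-z}\Phi_1(-z)+\Phi_0(z)\bigr)$, a plus, not the minus you claim. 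The paper's $\Phi_0$ in \eqref{Psipm} has density $u(0,t)=-\widehat\varphi(t)$, hence equals $-1$ times yours, and it is that sign flip which produces the minus in \eqref{hatvarphi}. As stated, your definitions and your conclusion are mutually inconsistent.

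\textbf{Regularity of $\varphi$.} You take the Laplace transform of $u''=2\varphi'+t^2u$ directly, invoking $\widehat{\varphi'}(z)=e^{-z}\varphi(1)-\varphi(0)+z\widehat\varphi(z)$. That integration-by-parts identity requires $\varphi$ to be absolutely continuous on $[0,1]$, and at this stage nothing of the sort has been established about the eigenfunctions; the paper explicitly flags exactly this (``it is not clear at this stage whether $\varphi'(x)$ exists at the endpoints $x\in\{0,1\}$'') and works around it by introducing $v(x,t):=u(x,t)-\frac{2\lambda^{-1}C_\alpha}{\Gamma(\alpha-1)}u_0(x)$, for which $v'(x,t)=u'(x,t)-2\varphi(x)$ is given by an explicit formula in $u$ alone, the boundary values $v'(0,t)=tu(0,t)$, $v'(1,t)=-tu(1,t)$ follow by continuity, and $v''=t^2u$ removes $\varphi'$ entirely. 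Your ``main obstacle'' remark identifies a \emph{different} (also real) integrability issue; the one you skip over is the one the paper's auxiliary $v$ is built to handle, and you would need a separate argument for the absolute continuity of $\varphi$ to justify your route.
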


\begin{proof}

Let us define the functions
$$
u(x,t) = \int_{0}^1  \varphi(x')  e^{-t|x-x'|} \sign(x-x')dx'
$$
and
$$
u_0(x) = \int_0^\infty  t^{\alpha-2}  u(x,t) dt.
$$
Using  identity \eqref{idnt} with $\alpha$ replaced by $\alpha-1\in (0,1)$
we get
\begin{align*}
(\widetilde K \varphi)(x) = &
\frac {C_\alpha} {\Gamma (\alpha-1)}
\frac{d}{dx} \int_0^\infty t^{\alpha-2} \int_{0}^1  \varphi(x')  e^{-t|x-x'|} \sign(x-x')dx' dt =
\\
&
\frac {C_\alpha} {\Gamma (\alpha-1)}
\frac{d}{dx} \int_0^\infty  t^{\alpha-2}  u(x,t) dt =
 \frac {C_\alpha} {\Gamma (\alpha-1)} u'_0(x),
\end{align*}
and therefore
\begin{equation}
\label{phi}
\varphi(x) = \frac{\lambda^{-1}C_\alpha}{\Gamma(\alpha-1)} u'_0(x), \quad x\in [0,1].
\end{equation}
Differentiating $u(x,t)$ with respect to $x$ twice gives
\begin{multline}\label{utag}
u'(x,t) =
\frac d{dx}\left(
\int_{0}^x  \varphi(x')    e^{-t(x-x')}dx'
-
\int_{x}^1  \varphi(x')    e^{-t(x'-x)}dx'
\right) =\\
  2\varphi(x)    -t\int_{0}^x  \varphi(x')    e^{-t(x-x')}dx'
       -t\int_{x}^1  \varphi(x')    e^{-t(x'-x)}dx',
\end{multline}
and
$$
u''(x,t) =  2\varphi'(x)     + t^2u(x,t),\quad x\in (0,1).
$$
Let us stress that it is not clear at this stage whether $\varphi'(x)$ exists at the endpoints $x\in \{0,1\}$.
Thus we define
$$
v(x,t) := u(x,t)-\frac{2\lambda^{-1}C_\alpha}{\Gamma(\alpha-1)}u_0(x), \quad x\in [0,1],
$$
which, for $x\in (0,1)$, satisfies
$
v' (x,t) = u' (x,t)-2\varphi(x).
$
Hence the definition of $v'(x,t)$  extends continuously to $[0,1]$  and using \eqref{utag}, we obtain

\begin{equation}
\label{bnd}
\begin{aligned}
& v'(1,t) = -t u(1,t) = -t  v(1,t)-t \frac{2\lambda^{-1}C_\alpha}{\Gamma(\alpha-1)}u_0(1) \\
& v'(0,t) = \phantom{+}t u(0,t) = \phantom{+}t v(0,t) + t \frac{2\lambda^{-1}C_\alpha}{\Gamma(\alpha-1)}u_0(0).
\end{aligned}
\end{equation}
The Laplace transform of $v''(x,t)$ satisfies
$$
\widehat v''(z,t) =   \int_{0}^1 e^{-zx} v''(x,t) dx = 
e^{-z}v'(1,t)- v'(0,t) +z e^{-z}v(1,t)-z  v(0,t) +z^2 \widehat v (z,t)
$$
and since $v''(x,t) = t^2 u(x,t)$, using  the boundary conditions \eqref{bnd},
\begin{align*}
&
t^2 \widehat u(z,t) =   \widehat v''(t,z) =\\
&
e^{-z}v'(1,t)-  v'(0,t) +z e^{-z}v(1,t)-z  v(0,t) +z^2 \widehat u(z,t)  -z^2\frac{2\lambda^{-1}C_\alpha}{\Gamma(\alpha-1)}\widehat u_0(z)=\\
&
e^{-z} (z-t)  u(1,t)- (z+t)u(0,t)
-\frac{2\lambda^{-1}C_\alpha}{\Gamma(\alpha-1)}z\Big(
 e^{-z}   u_0(1)
-    u_0(0)+z \widehat u_0(z)
\Big)
 +z^2  \widehat u(z,t).
\end{align*}
Rearranging, we obtain
\begin{multline*}
(t^2 -z^2)\widehat u(z,t)= -\frac{2\lambda^{-1}C_\alpha}{\Gamma(\alpha-1)}z^2 \widehat u_0(z)
-\frac{2\lambda^{-1}C_\alpha}{\Gamma(\alpha-1)}z\Big(
 e^{-z}   u_0(1)
-    u_0(0)\Big) \\
+e^{-z} (z-t)  u(1,t)-(z+t)u(0,t).
\end{multline*}
Multiplying by $t^{\alpha-2}/(t^2 -z^2)$ and integrating gives
\begin{align*}
 \widehat u_0(z) = & \; \widehat u_0(z) \frac{2\lambda^{-1}C_\alpha}{\Gamma(\alpha-1)} z^2 \int_0^\infty \frac{t^{\alpha-2}}{z^2-t^2 }dt
+ \frac{2\lambda^{-1}C_\alpha}{\Gamma(\alpha-1)}\Big(
 e^{-z}   u_0(1)
-   u_0(0)\Big)  z \int_0^\infty \frac{t^{\alpha-2}}{z^2-t^2 }dt \\
& -e^{-z} \int_0^\infty \frac{t^{\alpha-2}}{t+z}u(1,t)dt -\int_0^\infty \frac{t^{\alpha-2}}{t-z}u(0,t) dt.
\end{align*}
Using the definition of $\Lambda(z)$  in \eqref{Lambda}, we get
$$
\Lambda(z) \widehat u_0(z)=
 \frac{1-\Lambda(z)}{z} \Big(
 e^{-z}   u_0(1)
-     u_0(0)\Big)
-e^{-z} \int_0^\infty \frac{t^{\alpha-2}}{t+z}u(1,t)dt -\int_0^\infty \frac{t^{\alpha-2}}{t-z}u(0,t) dt.
$$
Since
$
\widehat u'_0(z) = e^{-z} u_0(1)-  u_0(0) +z \widehat u_0(z)
$
it follows that
\begin{align*}
\Lambda(z)   \widehat u'_0(z)=
&
\; e^{-z}   u_0(1)-     u_0(0)  -e^{-z} \int_0^\infty \frac{zt^{\alpha-2}}{t+z}u(1,t)dt - \int_0^\infty \frac{zt^{\alpha-2}}{t-z}u(0,t) dt=\\
&
\; e^{-z}   \int_0^\infty      \frac{t^{\alpha-1} }{t+z} u(1,t)dt  -     \int_0^\infty   \frac{t^{\alpha-1}}{t-z} u(0,t) dt.
\end{align*}
If we now define
\begin{equation}
\label{Psipm}
\begin{aligned}
\Phi_{0}(z) &=\int_0^\infty \frac{t^{\alpha-1}}{t-z}u(0,t)dt \\
\Phi_{1}(z) &=\int_0^\infty \frac{t^{\alpha-1}}{t-z}u(1,t)dt
\end{aligned}
\end{equation}
the following expression for the Laplace transform of $u'_0(x)$ is obtained
$$
 \widehat u'_0(z)= \frac{1}{\Lambda(z)}\Big(e^{-z} \Phi_{1}(-z)- \Phi_{0}(z)\Big),
$$
and the claimed formula follows from \eqref{phi}.

\end{proof}

The following lemma gives a simpler expression  for $\Lambda(z)$, which exhibits its singularities:

\begin{lem}
\

\medskip
\noindent
a) The function $\Lambda(z)$, defined in \eqref{Lambda}, admits the following explicit expression
\begin{equation}
\label{Lambda2}
\Lambda(z)=1-(z/\nu)^{\alpha-1}
\begin{cases}
e^{-i\frac \pi 2   (\alpha   -1) } &  \arg(z)\in (0,\pi) \\
e^{i\frac {\pi} 2  (\alpha   -1)  } &  \arg(z)\in (-\pi, 0)
\end{cases}
\end{equation}
which is discontinuous along the real axis and has two zeros at $\pm z_0 = \pm i \nu$ with
\begin{equation}\label{nuflan}
\nu^{\alpha-1} =  \frac{\Gamma(\alpha-1)\sin \frac {\pi (\alpha-1)} 2 }{ \pi\lambda^{-1}C_\alpha}.
\end{equation}

\medskip
\noindent
b) The limits $\Lambda^{\pm}(t) =\lim_{z\to t^{\pm}} \Lambda(z)$ are given by the formulas
$$
\Lambda^{\pm}(t) =
1 -  |t/\nu|^{\alpha-1}
\begin{dcases}
e^{\mp i\frac \pi 2( \alpha- 1)} & t>0 \\
e^{\pm i \frac \pi 2 (\alpha- 1)} & t<0
\end{dcases}
$$
and satisfy the symmetries \eqref{conjp}-\eqref{absL}.

\medskip
\noindent
c) The argument $\theta(t):=\arg\{\Lambda^+(t)\}\in (-\pi, \pi]$ is an odd function, $\theta(-t)=-\theta(t)$, given by
\begin{equation}\label{tht}
\theta(t) =
\arctan \frac{   (t/\nu)^{\alpha-1} \sin \frac {\pi (\alpha-1)} 2}{1 - (t/\nu)^{\alpha-1} \cos \frac {\pi (\alpha-1)} 2}, \quad t\ge 0
\end{equation}
where the branches of $\arctan$ are chosen so that $\theta(t)$ increases continuously from $\theta(0)=0$ to
$\theta(\infty):=\lim_{t\to\infty}\theta(t)=\frac{3-\alpha}{2}\pi$ as $t\to\infty$.
\end{lem}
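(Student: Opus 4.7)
The plan is to reduce everything to a single contour-integral identity and then read off the claims by bookkeeping of arguments. The basic ingredient is the evaluation of the Cauchy-type integral appearing in \eqref{Lambda}, completely analogous to \eqref{Lint} but with the parameter $\alpha$ replaced by $\alpha-2$. Since $\alpha-2\in (-1,0)$, the function $\xi \mapsto \xi^{\alpha-2}/(\xi^2-z^2)$ is still integrable at the origin and decays at infinity, so integrating it over the keyhole contour cut along the negative real semiaxis (exactly as in the derivation of \eqref{Lint}) yields a closed-form expression with the same $e^{\pm i(1-\alpha)\pi/2}/\cos(\pi\alpha/2)$ trigonometric structure, adjusted for the shift in exponent.

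Plugging this evaluation back into \eqref{Lambda} and absorbing the constant factor via the definition \eqref{nuflan} of $\nu$, one obtains \eqref{Lambda2}. Setting $z=\pm i\nu$ then gives $(z/\nu)^{\alpha-1}=e^{\pm i\pi(\alpha-1)/2}$, which cancels the phase in \eqref{Lambda2} and produces the value $1-1=0$; this establishes part (a). For part (b), the limits $\Lambda^\pm(t)$ are obtained simply by substituting $\arg(z)=0^\pm$ for $t>0$ and $\arg(z)=\pm\pi$ for $t<0$ into \eqref{Lambda2}. The conjugation symmetry \eqref{conjp} is then immediate from the formula, and \eqref{prop}, \eqref{absL} follow by inspecting the two cases $t>0$ and $t<0$ separately (here the fact that the modulus $|1-re^{i\psi}|$ depends on $\psi$ only through its cosine, combined with the parity of the phase structure, does the job).

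For part (c), writing $\Lambda^+(t)=1-(t/\nu)^{\alpha-1}e^{-i\pi(\alpha-1)/2}$ for $t>0$ gives immediately
\[
\Re\Lambda^+(t)=1-(t/\nu)^{\alpha-1}\cos\tfrac{\pi(\alpha-1)}{2}, \qquad \Im\Lambda^+(t)=(t/\nu)^{\alpha-1}\sin\tfrac{\pi(\alpha-1)}{2},
\]
from which \eqref{tht} and the oddness $\theta(-t)=-\theta(t)$ (using the corresponding formula on $t<0$) follow. Continuity and monotonicity are obtained by verifying that the complex-valued curve $t\mapsto\Lambda^+(t)$ traces a ray in the plane as $t$ increases; concretely, since $\sin\tfrac{\pi(\alpha-1)}{2}>0$ for $\alpha\in(1,2)$, the imaginary part is strictly positive and increasing in $t$, so $\Lambda^+(t)$ never crosses the positive real axis for $t>0$ and its argument varies continuously. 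The boundary values $\theta(0)=0$ and $\theta(\infty)=\tfrac{3-\alpha}{2}\pi$ are read off from the leading behaviour $\Lambda^+(t)\to 1$ as $t\to 0$ and $\Lambda^+(t)\sim -(t/\nu)^{\alpha-1}e^{-i\pi(\alpha-1)/2}$ as $t\to\infty$, whose argument is $\pi-\tfrac{\pi(\alpha-1)}{2}=\tfrac{(3-\alpha)\pi}{2}$.

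The main technical nuisance I expect is the branch-tracking for $\theta(t)$ in part (c): the principal-value $\arctan$ in \eqref{tht} lands in $(-\pi/2,\pi/2)$, whereas the true argument must sweep continuously into $(\pi/2,\pi)$ as soon as $\Re\Lambda^+(t)$ changes sign at $(t/\nu)^{\alpha-1}=\sec\tfrac{\pi(\alpha-1)}{2}$. The statement in the lemma carefully says ``the branches of $\arctan$ are chosen so that $\theta(t)$ increases continuously,'' so one simply needs to check that a single branch shift at the sign change of $\Re\Lambda^+(t)$ produces a monotone continuous function reaching the stated limit. Everything else is algebraic manipulation of the explicit expression \eqref{Lambda2}.
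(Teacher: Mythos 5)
Your proposal is correct and follows exactly the paper's route: the paper's own proof is the one-line remark that everything follows from the identity \eqref{Lint} with $\alpha$ replaced by $\alpha-2$, and you simply carry out that substitution, absorb the constant via \eqref{nuflan}, and then read off parts (a)--(c) by the same algebraic bookkeeping. Your geometric observation that $t\mapsto\Lambda^+(t)$ traces a straight ray from $1$ in the direction $e^{i\pi(3-\alpha)/2}$ is a clean way to justify the monotone continuous branch choice for $\theta$, which the paper leaves implicit.
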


%
%
%

\begin{proof}
The claimed formulas follow from the identity \eqref{Lint}  with $\alpha$ being replaced by $\alpha-2$.

\end{proof}

\subsubsection{Removal of singularities}

The inverse of  $\widetilde K$ is an integral operator with a weakly singular kernel (see \eqref{kappa} below)
and therefore its eigenfunctions are continuous on $[0,1]$ and, consequently, the Laplace transform
$\widehat\varphi(z)$ is an entire function. Therefore both
singularities must be removable.
Since $\Phi_i(-z_0)=\overline{\Phi_i(z_0)}$, $i=0,1$ the poles at $\pm z_0=\pm i\nu$ are removed if
\begin{equation}
\label{eq1}
e^{- z_0} \Phi_{1}(- z_0)- \Phi_{0}( z_0)=0.
\end{equation}
The discontinuity $\widehat \varphi^+(t)\ne \widehat \varphi^-(t)$, $t\in \Real$ vanishes if
\begin{align*}
&
\frac{1}{\Lambda^+(t)}\Big(e^{-t} \Phi_{1}(-t)- \Phi^+_{0}(t)\Big)
=
\frac{1}{\Lambda^-(t)}\Big(e^{-t} \Phi_{1}(-t)-  \Phi^-_{0}(t)\Big),
\quad t>0 \\
&
\frac{1}{\Lambda^+(t)}\Big(e^{-t} \Phi^-_{1}(-t)-  \Phi_{0}(t)\Big) =
\frac{1}{\Lambda^-(t)}\Big(e^{-t} \Phi^+_{1}(-t)- \Phi_{0}(t)\Big), \quad t<0.
\end{align*}

%

\noindent
Since $\theta(t)$ is antisymmetric around the origin, a rearrangement reduces these conditions
to
\begin{equation}
\label{Psipmcond}
\begin{aligned}
&
 \Phi^+_{0}(t) -   e^{2i\theta(t)}\Phi^-_{0}(t)
=
-2i  e^{i\theta(t)}\sin\theta(t) e^{-t} \Phi_{1}(-t)
\\
&
  \Phi^+_{1}(t)- e^{2i\theta(t)}\Phi^-_{1}(t)      =
-2i e^{ i\theta(t)}\sin\theta(t) e^{- t} \Phi_{0}(-t)
\end{aligned}\quad t>0
\end{equation}
Moreover, definition \eqref{Psipm} gives the estimates
\begin{equation}\label{Phigr}
\Phi_i(z) = \begin{cases}
O( z^{\alpha-2}) &  z \to\infty \\
O(1) &  z \to 0
\end{cases}\quad i=0,1
\end{equation}

\subsubsection{An equivalent formulation of the eigenproblem}
Let us find functions $\Phi_0(z)$ and $\Phi_1(z)$, which are sectionally holomorphic on
$\mathbb{C}\setminus \Real_{\ge 0}$ and satisfy the boundary condition \eqref{Psipmcond},
the algebraic constraint \eqref{eq1} and the growth conditions \eqref{Phigr}.
Following the program outlined in Section \ref{sec-strat}, we will construct such
functions as solutions to a certain integro-algebraic system of equations. To this end, let us define (cf. \eqref{X0z})
\begin{equation}\label{Xz0}
X_0(z):=
(-z)^{ - \theta_0(\infty)/\pi}\exp \left(\frac 1 { \pi } \int_0^\infty \frac{\theta_0(t)-\theta_0(\infty)}{t-z}dt\right)
\end{equation}
where $\theta_0(t):=\theta(\nu t)$ does not depend on $\nu$, see \eqref{tht}. For any $\nu>0$ let $p_\pm(t)$ be
the solutions of the integral equations
\begin{equation}
\label{qpmeq}
p_\pm( t)  = \pm \frac 1 \pi \int_0^\infty \frac{h_0(\tau)e^{-\nu\tau}}{\tau+ t}p_\pm(\tau)d\tau + 1, \quad t>0,
\end{equation}
where the function
$$
h_0(t) =  e^{i\theta_0(t)} \sin \theta_0(t)\frac{X_0(-t)}{X_0^+(t)}, \quad t >0
$$
takes real values. Let us extend the domain of $p_\pm$ to the cut plane by setting
\begin{equation}
\label{qpmfbn}
p_\pm(z)  := \pm \frac 1 \pi \int_0^\infty \frac{h_0(\tau)e^{-\nu\tau}}{\tau+z}p_\pm(\tau)d\tau + 1, \quad z\in \mathbb{C}\setminus \Real_{\le 0}
\end{equation}
and define
\begin{equation}\label{xibareta}
\begin{aligned}
& \xi  := e^{-i\nu/2}X_0(-i) p_-(i) \\
& \bar\eta  :=  e^{-i\nu/2}  X_0(-i) p_+(i).
\end{aligned}
\end{equation}

The following lemma gives an equivalent formulation of the eigenproblem \eqref{eigen}:

\begin{lem}\label{lemia2}
Let $(p_\pm, \nu)$ with $\nu>0$ be a solution of the system, which consists of integral equations \eqref{qpmfbn} and
algebraic equation
\begin{equation}
\label{algfbn}
\Im\{ \xi\}\Re\{\bar \eta\} =0.
\end{equation}
Let $\varphi$ be defined by the Laplace transform \eqref{hatvarphi}, where
\begin{equation}
\label{defPhi}
\begin{aligned}
&
\Phi_{1}( z) =    X_0(z/\nu )\Big(\Re\{\eta\}  p_-(-z/\nu )+\Im\{\xi\} p_+(-z/\nu ) \Big) \\
&
\Phi_{0}( z) =    X_0(z/\nu )\Big(\Re\{\eta\}  p_-(-z/\nu )-\Im\{\xi\} p_+(-z/\nu ) \Big),
\end{aligned}
\end{equation}
and $\lambda$ be defined by \eqref{nuflan}. Then the pair $(\lambda, \varphi)$ solves the eigenproblem \eqref{eigen}.
Conversely, any pair $(\lambda, \varphi)$ satisfying \eqref{eigen}, defines a solution of the above integro-algebraic system.
\end{lem}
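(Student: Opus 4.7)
The plan is to adapt the proof of Lemma \ref{lemia} to the present setting, accommodating the opposite sign in the boundary conditions \eqref{Psipmcond} and the nonzero limit $\theta(\infty)=(3-\alpha)\pi/2$ of the argument. First, given an eigenpair $(\lambda,\varphi)$, I invoke Lemma \ref{lem61} to produce the sectionally holomorphic $\Phi_0,\Phi_1$ from \eqref{Psipm}; they satisfy the boundary conditions \eqref{Psipmcond}, the algebraic constraint \eqref{eq1} and the growth bounds \eqref{Phigr}, and I define $\nu>0$ through \eqref{nuflan}. Next, I solve the homogeneous Riemann problem $X^+(t)/X^-(t)=e^{2i\theta(t)}$, $t>0$, via the Sokhotski--Plemelj formula, inserting the compensating prefactor $(-z)^{-\theta(\infty)/\pi}$ to absorb the nonvanishing tail of $\theta$; this produces a function that (after rescaling) matches $X_0(z/\nu)$ with $X_0$ as in \eqref{Xz0}. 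The integer ambiguity in $X$ is fixed so that both $\Phi_i/X$ vanish at infinity and have integrable singularities at the origin.

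Introduce the symmetric and antisymmetric combinations $S:=(\Phi_0+\Phi_1)/(2X)$ and $D:=(\Phi_0-\Phi_1)/(2X)$. Dividing \eqref{Psipmcond} by $X^+$ and adding/subtracting decouples the boundary conditions into
$$
S^+(t)-S^-(t)=-2ih_0(t/\nu)e^{-t}S(-t),\qquad D^+(t)-D^-(t)=+2ih_0(t/\nu)e^{-t}D(-t),\quad t>0,
$$
with $h_0$ real-valued by the same calculation as \eqref{hsq}. Because $S(z),D(z)\to 0$ as $z\to\infty$ (by the above growth accounting), the Sokhotski--Plemelj inversion gives representations with no polynomial parts. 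Setting $z=-s\nu$ and rescaling $t=u\nu$ shows that $\widetilde S(s):=S(-s\nu)$ and $\widetilde D(s):=D(-s\nu)$ satisfy the integral equations in \eqref{qpmeq} up to multiplicative real constants, so by linearity and uniqueness $S(z)=C_S\,p_-(-z/\nu)$ and $D(z)=C_D\,p_+(-z/\nu)$. Reassembling yields $\Phi_0=X_0(z/\nu)(C_S p_-(-z/\nu)+C_D p_+(-z/\nu))$ and $\Phi_1=X_0(z/\nu)(C_S p_-(-z/\nu)-C_D p_+(-z/\nu))$.

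Finally, I substitute these expressions into \eqref{eq1} at $z_0=i\nu$. Using the Schwarz reflection $p_\pm(\bar z)=\overline{p_\pm(z)}$ and $X_0(\bar z)=\overline{X_0(z)}$ (inherited from the reality of the defining integrands), the complex condition \eqref{eq1} reduces after multiplication by $e^{i\nu/2}$ to the scalar identity $-2i\,\Im\{\xi\}\,C_S+2\Re\{\eta\}\,C_D=0$ in terms of the quantities defined in \eqref{xibareta}. Separating real and imaginary parts gives $\Re\{\eta\}C_D=0$ and $\Im\{\xi\}C_S=0$, so a nontrivial pair $(C_S,C_D)$ exists precisely when $\Im\{\xi\}\Re\{\eta\}=0$, i.e. when \eqref{algfbn} holds; the admissible coefficients are $C_S=\Re\{\eta\}$, $C_D=\Im\{\xi\}$ up to a common scalar, which is exactly \eqref{defPhi}. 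The dichotomy $\Re\{\eta\}=0$ vs.\ $\Im\{\xi\}=0$ enforced by \eqref{algfbn} singles out antisymmetric resp.\ symmetric eigenfunctions, consistent with part 3 of Theorem \ref{main-thm-fbn}. The converse direction is carried out by reversing each step: starting from $(p_\pm,\nu)$ solving \eqref{qpmfbn}--\eqref{algfbn}, defining $\Phi_0,\Phi_1$ via \eqref{defPhi}, verifying \eqref{Psipmcond}, \eqref{eq1} and \eqref{Phigr}, and inverting the Laplace transform \eqref{hatvarphi} to obtain an eigenfunction of \eqref{eigen}.

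The main obstacle is the careful growth accounting for $X$, $\Phi_i$, $S$ and $D$ at both $0$ and $\infty$: this is what justifies the absence of polynomial parts in the Sokhotski--Plemelj representation and guarantees that the resulting integral equations for $p_\pm$ admit unique square-integrable solutions (the contractivity of the associated operator then follows by the same argument as in Lemma \ref{lem-A}). A secondary subtlety is the systematic use of Schwarz reflection to reduce the two real equations contained in \eqref{eq1} to the single scalar condition \eqref{algfbn}.
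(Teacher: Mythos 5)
Your proposal follows the paper's own route (the Riemann problem with the $(-z)^{-\theta(\infty)/\pi}$ prefactor, the $S/D$ decoupling, the reduction to $p_\pm$, and the algebraic reduction of \eqref{eq1}), so the architecture is right; but there is a genuine error at exactly the step you single out as the main obstacle, namely the growth accounting that determines the polynomial part.

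You assert that $S(z),D(z)\to 0$ as $z\to\infty$ and conclude ``no polynomial parts.'' This is false: from \eqref{Phigr} and \eqref{Xzest} one gets $|S(z)|,|D(z)|=O\big(|z|^{(\alpha-1)/2}\big)$ with $(\alpha-1)/2\in(0,\tfrac12)$ for $\alpha\in(1,2)$, i.e.\ the a priori bound permits \emph{growth}, not decay. What the paper actually uses is the sharper fact — following from the explicit inverse kernel \eqref{kappa}, which forces $\varphi(0)=\varphi(1)=0$ — that $S(-t)$ and $D(-t)$ tend to \emph{nonzero constants} as $t\to\infty$. Those constants are precisely the degree-zero polynomials $P_1=c_1$, $P_2=c_2$ in the Sokhotski--Plemelj representation, which after the rescaling $z=-\nu s$ become the ``$+1$'' forcing in \eqref{qpmeq}. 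If, as you claim, the polynomial part were genuinely absent, the resulting equations for $S(-t)$ and $D(-t)$ would be homogeneous and, by the contractivity you invoke from the analogue of Lemma \ref{lem-A}, would force $S\equiv D\equiv 0$, hence $\Phi_0\equiv\Phi_1\equiv 0$ and $\varphi\equiv 0$. Your very next sentence — that $S(-s\nu),D(-s\nu)$ solve \eqref{qpmeq} ``up to multiplicative real constants'' — smuggles the constants back in and contradicts the stated absence of polynomial parts, since $p_\pm$ solve an \emph{inhomogeneous} equation whose constant forcing is exactly the polynomial you dropped. To repair the proof, replace ``$S,D\to 0$'' by ``$S(-t),D(-t)$ tend to (real) constants,'' justify this via the vanishing of the eigenfunctions at the endpoints (\eqref{kappa}), and then $P_1=c_1$, $P_2=c_2$ emerge as the correct degree-zero free terms. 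A second, cosmetic, slip: with your sign convention $D:=(\Phi_0-\Phi_1)/(2X)$ (the negative of the paper's) the matching to \eqref{defPhi} gives $C_D=-\Im\{\xi\}$, not $C_D=\Im\{\xi\}$; since eigenfunctions are defined up to sign this is harmless, but it is not ``exactly \eqref{defPhi}'' as written.
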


\begin{proof}\

\medskip
\noindent
a) Let us find a nonvanishing sectionally holomorphic function $X(z)$ on the cut plane
$\mathbb{C}\setminus \Real_{\ge 0}$, satisfying the boundary condition
\begin{equation}
\label{Xmp}
\frac{X^+(t)}{X^-(t)} = e^{2i\theta(t)}\quad t>0.
\end{equation}
Since
$\theta_0(\infty)=\lim_{t\to\infty}\theta_0(t)\ne 0$, the appropriate choice in this case is
$$
X(z) = (-z)^{ - \theta(\infty)/\pi}\exp \left(\frac 1 { \pi } \int_0^\infty \frac{\theta(t)-\theta(\infty)}{t-z}dt\right),
$$
as can be readily checked by the Sokhotski--Plemelj formula.
This function is unique up to a multiplicative factor $z^\beta$ with any integer $\beta$. Any $\beta \le 1$
will suit our purposes, that is, render the functions in the right hand side of \eqref{SDfbn} below to be H\"older
on the positive real line and vanish at infinity. Fixing $\beta=0$ is a matter of convenience and any other
choice will not change the ultimate form of the integro-algebraic system.

Note that $X(z)$ satisfies
\begin{equation}\label{Xzest}
|X(z)|\sim \begin{cases}
|z|^{ \frac{\alpha-3}{2}}& z\to\infty\\
1 & z\to 0
\end{cases}
\end{equation}

Plugging \eqref{Xmp} into \eqref{Psipmcond}, we obtain
$$
\begin{aligned}
&
\frac{\Phi^+_{0}(t)}{X^+(t)} -   \frac{\Phi^-_{0}(t)}{X^-(t)}
=
-2i  e^{i\theta(t)}\sin\theta(t) e^{-t} \frac{X(-t)}{X^+(t)}\frac{\Phi_{1}(-t)}{X(-t)}
 \\
&
 \frac{\Phi^+_{1}(t)}{X^+(t)}- \frac{\Phi^-_{1}(t)}{X^-(t)}      =
-2i e^{ i\theta(t)}\sin\theta(t) e^{- t}\frac{X(-t)}{X^+(t)} \frac{\Phi_{0}(-t)}{X(-t)}
\end{aligned}
$$
and hence the functions
\begin{equation}\label{SDPhi}
\begin{aligned}
& S(z) = \frac{\Phi_{1}(z)+\Phi_{0}(z)}{2X(z)} \\
& D(z) = \frac{\Phi_{1}(z)-\Phi_{0}(z)}{2X(z)}
\end{aligned}
\end{equation}
solve the decoupled system
\begin{equation}
\label{SDfbn}
\begin{aligned}
&
S^+(t) -  S^-(t) = -2i   e^{-t} h(t)S(-t) \\
&
D^+(t) - D^-(t) =   \; 2i   e^{-t} h(t)D(-t)
\end{aligned}
\end{equation}
where we defined
$$
h(t) =  e^{i\theta(t)} \sin \theta(t)\frac{X(-t)}{X^+(t)}.
$$
By the Sokhotski--Plemelj formula
\begin{align*}
\frac{X(-t)}{X^+(t)} & =
e^{-i\theta(\infty)}
\exp\left(\frac 1 { \pi } \int_0^\infty \frac{\theta(\tau)-\theta(\infty)}{\tau+t}d\tau-
\frac 1 { \pi   } \dashint_0^\infty \frac{\theta(\tau)-\theta(\infty)}{\tau-t}d\tau
\right)e^{i\theta(\infty)-i  \theta(t)} \\
&=
\exp\left(-\frac {2t} { \pi } \dashint_0^\infty
\frac{ \theta(\tau)-\theta(\infty)  }{\tau^2-t^2}
d\tau
\right)  e^{-i  \theta(t)}
\end{align*}
and integration by parts shows that
\begin{equation}
\label{halt}
\begin{aligned}
h_0(t) = h(\nu t) & =
\exp\left(-\frac {2t} { \pi } \dashint_0^\infty
\frac{ \theta_0(\tau)-\theta_0(\infty)  }{\tau^2-t^2}
d\tau
\right)   \sin \theta_0(t)\\
&=
\exp\left(-\frac 1 \pi \dashint_0^\infty  \theta_0'(\tau) \log \left|\frac{t+\tau}{t-\tau}\right|d\tau
\right)
 \sin\theta_0(t)\ge 0,
\end{aligned}
\end{equation}
a real valued function.

\medskip
\noindent
b) By the estimates \eqref{Phigr} and \eqref{Xzest}, the functions in the right hand sides of \eqref{SDfbn} are
H\"older continuous and vanish at infinity. Therefore, applying the Plelemlj formula, we get
\begin{equation}
\label{SDiefbn}
\begin{aligned}
&
S(z)  = -\frac 1 \pi \int_0^\infty \frac{h(\tau)e^{-\tau}}{\tau-z}S(-\tau)d\tau + P_1(z) \\
&
D(z)  = \;\; \frac 1 \pi \int_0^\infty \frac{h(\tau)e^{-\tau}}{\tau-z}D(-\tau)d\tau + P_2(z)
\end{aligned}
\end{equation}
where $P_1(z)$ and $P_2(z)$ are arbitrary polynomials. The same estimates imply that $S(-t)$ and $D(-t)$ grow not faster
than $t^{\frac{\alpha-1}2}$ as $t\to\infty$. In fact,  using the definitions of $\Phi_1$ and $\Phi_0$, 
it can be seen that $S(-t)$ and $D(-t)$ are asymptotic to constants as $t\to\infty$,
since the exact formula for the inverse kernel \eqref{kappa} implies that the eigenfunctions vanish at the endpoints of the interval.
Therefore the polynomials must be of the zero degree, that is, $P_1(z) :=c_1$ and $P_2(z) :=c_2$ with real constants $c_1$ and $c_2$.

If we set $z:=-t$ for $t\in \Real_{>0}$ in \eqref{SDiefbn}, the integral equations for $S(-t)$ and $D(-t)$ are obtained:
\begin{align*}
&
S(-t)  = -\frac 1 \pi \int_0^\infty \frac{h(\tau)e^{-\tau}}{\tau+t}S(-\tau)d\tau + c_1 \\
&
D(-t)  = \;\; \frac 1 \pi \int_0^\infty \frac{h(\tau)e^{-\tau}}{\tau+t}D(-\tau)d\tau + c_2
\end{align*}
and by linearity
$$
\begin{aligned}
& S(-\nu t) = c_1 p_-(t)  \\
& D(-\nu t) = c_2 p_+(t)
\end{aligned}
$$
Combining this with the definition \eqref{SDPhi} we get
\begin{equation}
\label{PP}
\begin{aligned}
&
\Phi_{0}(z\nu) =    X_0(z)\Big( c_1 p_-(-z) - c_2 p_+(-z)\Big)\\
&
\Phi_{1}(z\nu) =    X_0(z)\Big(c_1 p_-(-z) + c_2 p_+(-z)\Big)
\end{aligned}
\end{equation}
Plugging these expressions into \eqref{eq1} gives the equation
$$
 \big(\xi   - \bar \xi\big)c_1 +\big( \bar \eta+\eta\big)c_2   =0,
$$
where $\xi$ and $\eta$ are defined in \eqref{xibareta}.
This equation has a nontrivial solution if and only if the condition \eqref{algfbn} is satisfied. The expressions \eqref{defPhi} now follow from \eqref{PP} after discarding the multiplicative constants.
Thus starting with a solution of the eigenproblem we constructed a solution of the integro-algebraic system
defined in above. The converse construction is checked by reverting all the operations.
\end{proof}

The value of $X_0(i)$ for the function defined in \eqref{Xz0} can be found explicitly and will be useful in
the calculations to follow:

\begin{lem}
\begin{equation}
\label{argmod}
\arg \{X_0(i) \}= \frac{3-\alpha}{8}\pi \quad\text{and}\quad
\big|X_0(i)\big| = \sqrt{\frac{\alpha-1}{2}}
\end{equation}
\end{lem}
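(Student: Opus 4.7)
The plan is to parallel the strategy used in Lemma \ref{lemX0} (for the fBm case $\alpha<1$), adapted to the richer structure of $X_0$ here, where the prefactor $(-z)^{-\theta_0(\infty)/\pi}$ is non-trivial because $\theta_0(\infty)=(3-\alpha)\pi/2\neq 0$.

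For the argument, I will directly split $\log X_0(i)$ using its definition \eqref{Xz0}: the prefactor contributes $-\frac{\theta_0(\infty)}{\pi}\log(-i)=i\theta_0(\infty)/2$, while the Cauchy integral, after writing $1/(t-i)=(t+i)/(t^2+1)$, contributes to the imaginary part the quantity $\frac{1}{\pi}\int_0^\infty \frac{\theta_0(t)-\theta_0(\infty)}{t^2+1}dt$. To evaluate this integral I will first establish the symmetry
\begin{equation*}
\theta_0(t)+\theta_0(1/t)=\theta_0(\infty),\qquad t>0,
\end{equation*}
by computing $\arg\{\Lambda^+(\nu t)\Lambda^+(\nu/t)\}$ exactly as in Lemma \ref{lemX0}. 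Using the explicit form \eqref{Lambda2}, one has $\Lambda^+(\nu t)\Lambda^+(\nu/t)=e^{-i\pi(\alpha-1)/2}\big[2\cos\tfrac{\pi(\alpha-1)}{2}-(t^{\alpha-1}+t^{-(\alpha-1)})\big]$, a negative real multiple of $e^{-i\pi(\alpha-1)/2}$, whose argument is $\pi-\pi(\alpha-1)/2=\theta_0(\infty)$. Then the substitution $t\mapsto 1/t$ in the above integral, combined with the symmetry, yields $\int_0^\infty \tfrac{\theta_0(t)-\theta_0(\infty)}{t^2+1}dt=-\theta_0(\infty)\pi/4$, so that $\Im\log X_0(i)=\theta_0(\infty)/2-\theta_0(\infty)/4=\theta_0(\infty)/4=(3-\alpha)\pi/8$, as claimed.

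For the modulus, the key identity I intend to prove is
\begin{equation*}
X_0(z)\,X_0(-z)=\frac{\Lambda_0(z)}{z^2+1},\qquad \Lambda_0(z):=\Lambda(z\nu),
\end{equation*}
which immediately gives $|X_0(i)|^2=X_0(i)\overline{X_0(i)}=X_0(i)X_0(-i)=\Lambda_0'(i)/(2i)$, and a direct differentiation of \eqref{Lambda2} at $z=i$ gives $\Lambda_0'(i)=i(\alpha-1)$, so $|X_0(i)|^2=(\alpha-1)/2$. To prove the identity I will argue by Liouville: both sides are sectionally holomorphic on $\mathbb{C}\setminus\Real$ and share the boundary jump $e^{2i\theta_0(t)}$ across the entire real axis. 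For $\Lambda_0$ this follows from $\Lambda^-=\overline{\Lambda^+}$ together with $\theta_0=\arg\Lambda^+$ (valid for $t<0$ as well by the odd extension). For $F(z):=X_0(z)X_0(-z)$ the jump on $\Real_{>0}$ comes from the defining relation $X_0^+/X_0^-=e^{2i\theta_0(t)}$ (with $X_0(-z)$ continuous there since $-z<0$), while on $\Real_{<0}$ the jump of $F$ comes entirely from the factor $X_0(-z)$, which again produces $e^{2i\theta_0(t)}$ by oddness. The simple zeros of $\Lambda_0$ at $\pm i$ are cancelled by $z^2+1$, and $F$ is nowhere zero on its domain, so $R(z):=F(z)(z^2+1)/\Lambda_0(z)$ extends to an entire function. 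A direct asymptotic computation, using $F(z)\sim(-z^2)^{-(3-\alpha)/2}$, $\Lambda_0(z)\sim-(-z^2)^{(\alpha-1)/2}$ and $z^2+1\sim-(-z^2)$, shows that the exponents $-(3-\alpha)/2+1-(\alpha-1)/2$ sum to zero and hence $R(z)\to 1$ at infinity. Liouville then forces $R\equiv 1$.

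The main subtlety will be carefully tracking the boundary behavior of $F$ on the negative real axis: although $X_0$ itself is continuous there, the pairing $X_0(z)X_0(-z)$ is not, and I must check that the resulting jump matches that of $\Lambda_0$ (which is also discontinuous on $\Real_{<0}$, in contrast to the fBm case). The growth matching at infinity — a delicate cancellation between the three prescribed exponents — is what makes the Liouville argument close exactly, and it is this exact cancellation that pins down the constant to $(\alpha-1)/2$ rather than some other algebraic expression in $\alpha$.
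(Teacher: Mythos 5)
Your argument for $\arg X_0(i)$ follows the paper's route essentially verbatim: establish the reflection symmetry $\theta_0(t)+\theta_0(1/t)=\theta_0(\infty)$, substitute $t\mapsto 1/t$ in $\int_0^\infty (\theta_0(t)-\theta_0(\infty))/(t^2+1)\,dt$ to obtain $-\theta_0(\infty)\pi/4$, and add the $\theta_0(\infty)/2$ contribution of the prefactor $(-i)^{-\theta_0(\infty)/\pi}$. The way you factor $\Lambda^+(\nu t)\Lambda^+(\nu/t)$ — extracting $e^{-i\pi(\alpha-1)/2}$ and using the AM-GM bound $t^{\alpha-1}+t^{1-\alpha}\ge 2$ to see the bracket is negative — differs cosmetically from the paper's (which factors out $-u^{\alpha-1}e^{-i\pi(\alpha-1)/2}$ and identifies the remaining piece as a modulus squared), but the content is identical. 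For the modulus, you prove the same key identity $X_0(z)X_0(-z)=\Lambda_0(z)/(z^2+1)$, but by a genuinely different route: the paper rewrites the exponent of $X_0(z)X_0(-z)$ as a Cauchy integral of $\log\widetilde\Lambda_0^\pm$ over the whole real line, then evaluates it by the residue theorem and pins down the multiplicative constant $C_3=1$ by a limit at infinity; you instead form the ratio $R(z)=X_0(z)X_0(-z)(z^2+1)/\Lambda_0(z)$, verify that numerator and denominator have the same jump $e^{2i\theta_0(t)}$ across all of $\Real\setminus\{0\}$ (correctly noting that for $t<0$ the jump of $F$ is carried entirely by $X_0(-z)$, while the jump of $\Lambda_0$ persists there, unlike the fBm case), observe the removable singularities at $\pm i$ and $0$, and close by Liouville using the exact cancellation of exponents $-(3-\alpha)/2+1-(\alpha-1)/2=0$. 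Your Liouville version is conceptually cleaner and avoids the paper's somewhat involved bookkeeping of the constants $C_1,C_2,C_3$ and the contour integration of $\log\widetilde\Lambda_0$; the paper's version is more constructive in that it produces the proportionality and the constant in two separate, elementary steps. The remaining computation $\Lambda_0'(i)=i(\alpha-1)$, hence $|X_0(i)|^2=\Lambda_0'(i)/(2i)=(\alpha-1)/2$, is the same in both.
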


\begin{proof}
Define
$$
\Gamma_0(z)=\frac 1 { \pi } \int_0^\infty \frac{\theta(t)-\theta(\infty)}{t-\nu z}dt =
\frac 1 { \pi } \int_0^\infty \frac{\theta_0(t)-\theta_0(\infty)}{t- z}dt
$$
where $\theta_0(t):=\theta(\nu t)$ does not depend on $\nu$. Then
\begin{align*}
\Im\left\{\Gamma_0(i)\right\}  & =\frac 1 { \pi } \int_0^\infty \frac{\theta_0(t)-\theta_0(\infty)}{t^2+1}dt \\
&
=
\frac 1 { \pi } \int_0^1 \frac{\theta_0(u)-\theta_0(\infty)}{u^2+1}dt
+
\frac 1 { \pi } \int_0^1 \frac{\theta_0(u^{-1})-\theta_0(\infty)}{u^2+1} du\\
&
=
\frac 1 { \pi } \int_0^1 \frac{\theta_0(u)+\theta_0(u^{-1})}{u^2+1}du
-
\frac {2\theta_0(\infty)} { \pi } \int_0^1 \frac{1}{u^2+1} du.
\end{align*}
Further,
\begin{align*}
&
 \theta_0(u) + \theta_0(u^{-1})   = \arg\Big\{
\Lambda^+(\nu u)\Lambda^+(\nu u^{-1})
\Big\}= \\
&
\arg\left\{
\Big(1-
u^{\alpha-1 }
 e^{ - i\frac {\pi   }  2(\alpha-1)} \Big)
\Big(1-
u^{1-\alpha}
e^{- i  \frac {\pi } 2(\alpha-1)} \Big)
\right\}
= \\
&
\arg\left\{
-u^{\alpha-1 }  e^{ - i\frac {\pi   }  2(\alpha-1)}\Big|1-u^{1-\alpha} e^{  i\frac {\pi   }  2(\alpha-1)} \Big|^2
\right\}= \\
&
\arg\left\{
-   e^{ - i\frac {\pi   }  2(\alpha-1)}
\right\} =
\pi - \frac{\pi} 2(\alpha-1) = \theta_0(\infty)
\end{align*}
and $\theta_0(u)+\theta_0(u^{-1})=\theta_0(\infty)$ by continuity. Consequently
$$
\Im\left\{\Gamma_0(i)\right\}  =
-\frac {\theta_0(\infty)} { \pi } \int_0^1 \frac{1}{u^2+1} du=
-\frac {\theta_0(\infty)} { 4 }
$$
which gives the expression claimed in \eqref{argmod}:
$$
\arg\big\{X_0(i)\big\} = \arg\Big\{(-i)^{-\theta_0(\infty)/\pi}e^{i\,\Im\{\Gamma_0(i)\}}\Big\}=
\arg\Big\{ e^{ i\frac {\theta_0(\infty)} { 4 }}\Big\}=\frac{\theta_0(\infty)}{4}.
$$

To compute the absolute value of $X_0(i)$,
let us consider the product
\begin{equation}\label{XXz}
X_0(z)X_0(-z) =
(-z )^{ -\frac{\theta(\infty)}\pi}z^{ -\frac{\theta(\infty)}\pi}
\exp \left(\frac 1 { \pi } \int_0^\infty \frac{\widetilde \theta_0(t)}{t-z}dt+\frac 1 { \pi } \int_0^\infty \frac{\widetilde \theta_0(t)}{t+z }dt\right),
\end{equation}
where  $\widetilde \theta_0(t):=\theta_0(t)-\theta_0(\infty)$.
Define the complex constant
$$
C_1 = \frac 1 { \pi } \int_0^\infty \frac{\widetilde \theta_0(t)}{t-i}dt
+\frac 1 { \pi } \int_0^\infty \frac{\widetilde \theta_0(t)}{t+i }dt,
$$
then the expression in the exponent in \eqref{XXz} can be rewritten as
\begin{align*}
&
\frac 1 { \pi } \int_0^\infty \frac{\widetilde \theta_0(t)}{t-z}dt
+\frac 1 { \pi } \int_0^\infty \frac{\widetilde \theta_0(t)}{t+z }dt =\\
&
\frac 1 { \pi } \int_0^\infty \widetilde \theta_0(t)\left(\frac{1}{t-z}-\frac{1}{t-i}\right)dt
+\frac 1 { \pi } \int_0^\infty \widetilde \theta_0(t)\left(\frac{1}{t+z }-  \frac{1}{t+i}\right)dt + C_1=\\
&
\frac {z-i} { \pi } \left(\int_0^\infty  \frac{\widetilde \theta_0(t)}{(t-z)(t-i)} dt
-  \int_0^\infty  \frac{\widetilde \theta_0(t)}{(t+z)(t+i) } dt\right) + C_1=\\
&
\frac {z-i} { \pi } \left(\int_0^\infty  \frac{ \theta_0(t)}{(t-z)(t-i)} dt
-  \int_0^\infty  \frac{ \theta_0(t)}{(t+z)(t+i) } dt\right) + \theta_0(\infty)\frac {2} \pi \log \frac z i + C_1 =\\
&
\frac {z-i} { \pi }  \int_{-\infty}^\infty  \frac{ \theta_0(t)}{(t-z)(t-i)} dt
  + \theta_0(\infty)\frac {2} \pi \log \frac z i + C_1
\end{align*}
where we used the property $\theta_0(-t)=-\theta_0(t)$ and the identity
\begin{align*}
\frac {z-i} { \pi } \left(\int_0^\infty  \frac{\theta_0(\infty)}{(t-z)(t-i)} dt
-  \int_0^\infty  \frac{\theta_0(\infty)}{(t+z)(t+i) } dt\right)   = \frac 2 \pi \log \frac i z.
\end{align*}

Plugging this back into \eqref{XXz} we obtain
\begin{align*}
X_0(z)X_0(-z) =
C_2\exp \left(
\frac {z-i} { 2\pi i }  \int_{-\infty}^\infty  \frac{ \log \Lambda_0^+(t)/\Lambda_0^-(t)}{(t-z)(t-i)} dt
\right)
\end{align*}
where $C_2$ is a complex constant and $\Lambda_0(z):=\Lambda(z\nu)$. Define $\widetilde \Lambda_0(z):= \Lambda_0(z)/(z^2+1)$, then
$$
\int_{-\infty}^\infty  \frac{ \log \Lambda_0^+(t)/\Lambda_0^-(t)}{(t-z)(t-i)} dt =
\int_{-\infty}^\infty  \frac{ \log \widetilde\Lambda_0^+(t) }{(t-z)(t-i)} dt -
\int_{-\infty}^\infty  \frac{ \log  \widetilde\Lambda_0^-(t)}{(t-z)(t-i)} dt.
$$
Since $\pm i$ are the only roots of $\Lambda_0(z)$, the function $\widetilde \Lambda_0(z)$ is analytic on $\mathbb{C}\setminus \Real$ and
therefore the integrals in the right hand side can be computed by means of the residue theorem.
Integrating the function $g(\xi)=\log \widetilde \Lambda_0(\xi)/(\xi-z)(\xi-i)$ over half circle contours
in the upper and lower half planes, we get
\begin{align*}
\frac 1{2\pi i}\int_{-\infty}^\infty g^+(t)dt -\frac 1{2\pi i}\int_{-\infty}^\infty g^-(t)dt & = \Res(g,z)+\Res(g,i) =
 \frac{\log\widetilde \Lambda_0(z)}{z-i}
+
\frac{\log \Lambda'_0(i)/2i}{i-z}
\end{align*}
and consequently
$$
X_0(z)X_0(-z) =
C_3 \widetilde \Lambda_0(z) = C_3 \frac{\Lambda_0(z)}{z^2+1}.
$$
Constant $C_3$ can be identified through the limit
$$
C_3 = \lim_{z\to\infty, \mathrm{Im}(z)>0}\frac{X_0(z)X_0(-z)(z^2+1)}{ \Lambda_0(z) } =
 1
$$
and the claimed formula follows
\begin{align*}
|X_0(i)|^2=X_0(i)X_0(-i) = \lim_{z\to i}
C_3 \frac{\Lambda_0(z)}{z^2+1}=
C_3 \frac{\Lambda'_0(i)}{ 2i }=\frac{\alpha-1}{2}.
\end{align*}
\end{proof}

\subsubsection{Properties of the integro-algebraic system}

Solvability of the equations in \eqref{qpmeq} again depends on the properties of operator $A$ defined in \eqref{Aop}.
The following analog of Lemma \ref{lem-A} shows that $A$ is a contraction, but this time, uniformly over $\nu>0$:

\begin{lem}\label{lemopA}
For any $\alpha_0\in (1,2)$ there exists an $\eps>0$, such that $\|A\|\le 1-\eps$ for all $\nu>0$ and $\alpha\in (1,\alpha_0]$.
\end{lem}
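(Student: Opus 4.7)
The plan is to adapt the proof of Lemma \ref{lem-A}, replacing the role played there by large $\nu$ with the vanishing of $h_0$ at the origin. Since $e^{-\nu s}\le 1$ uniformly in $\nu>0$, the weighted Cauchy--Schwarz argument of Lemma \ref{lem-A} (with weight $(s/t)^{1/2}$ and the identity $\int_0^\infty (s/t)^{1/2}(s+t)^{-1}\,dt=\pi$) gives directly
\begin{equation*}
|\langle g,Af\rangle|\le \|h_0\, e^{-\nu\cdot}\|_\infty\,\|f\|\,\|g\|\le \|h_0\|_\infty\,\|f\|\,\|g\|,
\end{equation*}
so $\|A\|\le \|h_0\|_\infty$ uniformly in $\nu>0$. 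The task reduces to proving $\sup_{\alpha\in(1,\alpha_0]}\|h_0(\cdot\,;\alpha)\|_\infty\le 1-2\eps$ for some $\eps=\eps(\alpha_0)>0$.

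From \eqref{halt} I would factor $h_0(t)=g_0(t)\sin\theta_0(t)$ with both factors in $[0,1]$. Since $\theta_0$ is strictly increasing on $(0,\infty)$ for $\alpha\in(1,2)$, the integrand in the exponent of $g_0$ is strictly positive, so $g_0(t)<1$ for every $t\in(0,\infty)$, with $g_0(t)\to 1$ only as $t\to 0,\infty$. On the other hand, since $\theta_0(\infty)=\tfrac{(3-\alpha)\pi}{2}>\pi/2$ for $\alpha<2$, there is a unique $t_*(\alpha)\in(0,\infty)$ where $\sin\theta_0(t_*)=1$, and $\sin\theta_0(t)<1$ elsewhere. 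The two factors therefore cannot equal $1$ simultaneously, yielding $h_0(t)<1$ pointwise on $(0,\infty)$; combined with $h_0(0)=0$ and $h_0(\infty)=\sin\tfrac{(\alpha-1)\pi}{2}\le \sin\tfrac{(\alpha_0-1)\pi}{2}<1$, this gives $\|h_0(\cdot\,;\alpha)\|_\infty<1$ for every fixed $\alpha\in(1,\alpha_0]$.

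Uniformity in $\alpha$ would then follow from continuity of the map $\alpha\mapsto\|h_0(\cdot\,;\alpha)\|_\infty$ on $(1,\alpha_0]$ together with a continuous extension to $\alpha=1$. The former is immediate from joint continuity of $h_0(t;\alpha)$ in $(t,\alpha)$ and the uniform tail bound $h_0(t;\alpha)\le \sin\tfrac{(\alpha_0-1)\pi}{2}+o(1)$ as $t\to\infty$, which localises the sup in $t$ to a compact set. For the latter, the pointwise limit $\theta_\infty(t)=\lim_{\alpha\to 1^+}\theta_0(t;\alpha)$ exists and is a continuous increasing function from $\theta_\infty(0)=0$ to $\theta_\infty(\infty)=\pi$ with $\theta_\infty(1)=\pi/2$; the associated $h_\infty=g_\infty\sin\theta_\infty$ is still pointwise less than $1$, since only at $t=1$ does $\sin\theta_\infty$ equal $1$, and there $g_\infty(1)<1$ because the integral $\int \theta_\infty'(\tau)\log|(1+\tau)/(1-\tau)|\,d\tau$ is strictly positive (the density $\theta_\infty'$ remains bounded near $\tau=1$ while the logarithmic singularity is integrable). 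The extended function is therefore continuous on the compact interval $[1,\alpha_0]$ with values in $[0,1)$, and hence bounded above by some $1-2\eps<1$.

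The main obstacle is the endpoint analysis at $\alpha\to 1^+$, where both factors of $h_0$ undergo nontrivial limits and interact through the location $t_*(\alpha)\to 1$ of the sinusoidal peak. The crucial verification is the strict inequality $h_\infty(1)<1$, which reduces to checking that the limiting density $\theta_\infty'$ is regular enough at its peak location $\tau=1$ for the Cauchy-type integral in the exponent of $g_\infty$ to remain finite and strictly positive.
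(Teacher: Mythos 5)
Your route is genuinely different from the paper's. After the common first step, $\|A\|\le\|h_0\|_\infty$, the paper's proof of Lemma~\ref{lemopA} asserts a single clean pointwise estimate, $h_0(t)\le\sin\theta_0(\infty)=\sin\tfrac{3-\alpha}{2}\pi$ for all $t\ge0$, which immediately yields $\|h_0\|_\infty\le\sin\tfrac{3-\alpha_0}{2}\pi=:1-\eps$ uniformly over $\alpha\in(1,\alpha_0]$ with no continuity argument in $\alpha$ needed. You instead establish only the qualitative pointwise inequality $h_0(t)<1$ (combining $g_0<1$ with the observation that $\sin\theta_0$ equals $1$ at a single $t_*$) and then try to upgrade to a uniform gap by a compactness argument over the parameter $\alpha\in[1,\alpha_0]$.

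That upgrade has a genuine gap, which you flag but do not close. The assertion that $\alpha\mapsto\|h_0(\cdot\,;\alpha)\|_\infty$ extends continuously to $\alpha=1$ requires substantially more than the pointwise limit $h_0(t;\alpha)\to h_\infty(t)$: pointwise convergence gives no control on $\limsup_{\alpha\to1^+}\sup_t h_0(t;\alpha)$, and here the convergence of the density $\theta_0'(\tau;\alpha)\,d\tau$ against the log-singular kernel $\log|(\tau+t)/(\tau-t)|$ is genuinely two-scale (the mass concentrates in the $\tau^{\alpha-1}$ variable while spreading in $\tau$), so a uniform-in-$t$ convergence of the exponent in $g_0$ needs a real argument. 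A second issue is the localisation of the supremum in $t$ to a compact set: since $h_0(t;\alpha)\to\sin\theta_0(\infty)$ as $t\to\infty$, the supremum may only be approached in that limit, so the compactness argument must be carried out on $[0,\infty]\times[1,\alpha_0]$ with the boundary values $h_0(\infty;\alpha)=\sin\tfrac{(\alpha-1)\pi}{2}$ incorporated, rather than on a compact $t$-set alone. Neither gap is necessarily fatal to the strategy, but as written the proof is incomplete; and since the whole content of the lemma is precisely uniformity in $\alpha$, which the paper's pointwise bound dispatches in one line (whose own justification in the text is, admittedly, quite terse), the missing step is the crux rather than a side issue.
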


\begin{proof}
The function under the exponent in \eqref{halt} is nonpositive, since
$\theta_0'(t)= d\theta_0(t)/dt\ge 0$ for all $t> 0$.
Since $\sin\theta_0(0)=0$
$$
h_0(t)\le \sin\theta_0(\infty) =\sin   \tfrac {3-\alpha} 2 \pi<1, \quad t\ge 0.
$$
Hence by the same argument as in Lemma \ref{lem-A}, we have $\|A\|\le \|h_0\|_\infty$ and the claim follows since $\|h_0\|_\infty\le \sin   \frac {3-\alpha_0} 2 \pi:=1-\eps$.
\end{proof}

The following lemma gathers several useful estimates:

\begin{lem}\label{lem-aest}
For any $\alpha_0\in (1,2)$ there exist a constant $C$, such that for all $\nu>0$ and all $\alpha\in (1,\alpha_0)$
$$
\big|p_\pm(\pm i)-1\big|\le C \nu^{-1} \quad \text{and}\quad
\big|p_\pm (\tau)-1\big|\le C\nu^{-1}\tau^{-1}, \quad\tau>0.
$$
\end{lem}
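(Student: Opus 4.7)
The plan is to mirror the strategy used in Lemma \ref{lem-bnds}, but the argument will be considerably cleaner here because Lemma \ref{lemopA} supplies contractivity of the operator $A$ uniformly in $\nu>0$ (for $\alpha$ in any compact subset of $(1,2)$), and because we only have the $p_\pm$ family to deal with---there is no analogue of the $q_\pm$ equations with forcing $t$. The key is first to obtain an $L^2(0,\infty)$ bound on $p_\pm(\tau)-1$ of order $\nu^{-1/2}$, and then to bootstrap it to the claimed pointwise bound of order $\nu^{-1}\tau^{-1}$ by inserting it back into \eqref{qpmeq}.

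First I would write the equation \eqref{qpmeq} in the form
$$
p_\pm(\tau)-1 \;=\; \pm A(p_\pm - 1)(\tau)\;\pm\;(A\mathbf{1})(\tau),
$$
where $(A\mathbf{1})(\tau)=\tfrac{1}{\pi}\int_0^\infty\tfrac{h_0(s)e^{-\nu s}}{s+\tau}\,ds$. A direct application of the generalized Minkowski inequality together with $\int_0^\infty(s+\tau)^{-2}\,d\tau=s^{-1}$ yields
$$
\|A\mathbf{1}\|_{L^2(0,\infty)} \;\le\;\frac{\|h_0\|_\infty}{\pi}\int_0^\infty e^{-\nu s}s^{-1/2}\,ds \;=\;\frac{\|h_0\|_\infty}{\sqrt{\pi\nu}},
$$
so the forcing is $O(\nu^{-1/2})$ in $L^2$. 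Combined with $\|A\|\le 1-\eps$ from Lemma \ref{lemopA}, this gives $\|p_\pm-1\|_{L^2}\le C\nu^{-1/2}$.

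Next, to upgrade to the pointwise bound, I would return to the integral representation for $\tau>0$, use $(s+\tau)^{-1}\le \tau^{-1}$, and split $p_\pm=1+(p_\pm-1)$:
$$
|p_\pm(\tau)-1|\;\le\;\frac{\|h_0\|_\infty}{\pi\tau}\Bigl(\int_0^\infty e^{-\nu s}|p_\pm(s)-1|\,ds \;+\;\int_0^\infty e^{-\nu s}\,ds\Bigr).
$$
Cauchy--Schwarz applied to the first integral gives $\|e^{-\nu\cdot}\|_{L^2}\,\|p_\pm-1\|_{L^2}\le (2\nu)^{-1/2}\cdot C\nu^{-1/2}=O(\nu^{-1})$, while the second integral equals $\nu^{-1}$; combining these yields $|p_\pm(\tau)-1|\le C\nu^{-1}\tau^{-1}$. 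The bound at $z=\pm i$ is obtained by the same decomposition but using $|\tau\pm i|^{-1}\le 1$, so that
$$
|p_\pm(\pm i)-1|\;\le\;\frac{\|h_0\|_\infty}{\pi}\Bigl(\|e^{-\nu\cdot}\|_{L^2}\|p_\pm-1\|_{L^2}+\nu^{-1}\Bigr)\;=\;O(\nu^{-1}).
$$

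The only real subtlety---and the step I would double-check most carefully---is the uniformity of the constants in $\alpha\in(1,\alpha_0)$: the contraction gap $\eps$ comes from Lemma \ref{lemopA} and depends only on $\alpha_0$, and $\|h_0\|_\infty$ is bounded by $\sin\tfrac{3-\alpha}{2}\pi\le 1$ thanks to the monotonicity argument in \eqref{halt}. Since every constant in the chain above depends only on $\|h_0\|_\infty$ and $\eps$, the final $C$ indeed depends only on $\alpha_0$, and no further work is required.
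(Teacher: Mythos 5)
Your proof is correct and follows essentially the same route as the paper's: the paper establishes the $L^2$ bound $\|A\mathbf{1}\|_2\le \|h_0\|_\infty\nu^{-1/2}$ via Minkowski (its \eqref{A1mink}), invokes the uniform contraction from Lemma \ref{lemopA}, and then refers to the bootstrap argument of Lemma \ref{lem-bnds}, which is exactly the Cauchy--Schwarz step you spell out. Your version is just a more explicit write-up of the same chain of estimates, with the correct observation that uniformity in $\alpha\in(1,\alpha_0)$ reduces to $\|h_0\|_\infty\le\sin\frac{3-\alpha}{2}\pi$ and the $\nu$-independent gap $\eps$.
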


\begin{proof}
The operator $A$ maps the unit function into $L^2(0,\infty)$ and by Minkowski's inequality
\begin{equation}\label{A1mink}
\|A\, 1\|_2 \le \|h_0\|_\infty\nu^{-\frac 1 2}.
\end{equation}
Consequently equations \eqref{qpmeq} are uniquely solvable
for any $\nu>0$ through the fixed-point iterations and $(p_\pm(t)-1; t\ge 0)\in L^2(0,\infty)$.
The rest of the proof is analogous to Lemma \ref{lem-bnds}.
\end{proof}

\subsubsection{Inversion of the Laplace transform}

The solution to \eqref{eigen} is obtained  by inverting the Laplace transform $\widehat \varphi(z)$:

\begin{lem}
Let $(\Phi_0, \Phi_1, \nu)$ satisfy the integro-algebraic system from Lemma \ref{lemia2}, then the function
\begin{multline}
\label{phixx}
\varphi(x)  =
 \frac 2 {\alpha-1}\Re\Big\{i e^{i\nu x}   \Phi_{0}(i\nu ) \Big\}
  + \\
  \frac 1 { \pi  }\int_{0}^\infty \frac {\sin\theta_0(u)}{\gamma_0(u)}
  \Big(e^{- u \nu x}\Phi_{0}(- u\nu)-e^{ -u \nu(1-x)} \Phi_{1}(- u\nu)\Big)d  u
\end{multline}
where $\gamma_0(u) :=|\Lambda^+(\nu u)|$, and $\lambda$ defined by the formula \eqref{nuflan} solve the eigenproblem \eqref{eigen}.
\end{lem}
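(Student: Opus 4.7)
The plan is to invert the Laplace transform of Lemma \ref{lem61} by contour integration, following exactly the scheme used for the fBm case in the proof of formula \eqref{phinu}. Since conditions \eqref{eq1} and \eqref{Psipmcond} make $\widehat\varphi(z)$ entire, the Bromwich inversion gives
$$
\varphi(x) = \frac{c}{2\pi i}\int_{-i\infty}^{i\infty}\bigl(f_1(z) - f_2(z)\bigr)\,dz, \qquad c:=\frac{\lambda^{-1}C_\alpha}{\Gamma(\alpha-1)},
$$
where $f_1(z):=e^{z(x-1)}\Phi_1(-z)/\Lambda(z)$ and $f_2(z):=e^{zx}\Phi_0(z)/\Lambda(z)$.

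For $x\in(0,1)$ the exponential in $f_1$ decays in $\{\mathrm{Re}\,z>0\}$ while that in $f_2$ decays in $\{\mathrm{Re}\,z<0\}$, so I close the contours to the right and to the left respectively, using the same four-arc configuration as in Figure \ref{fig2}. In the right half plane $\Phi_1(-z)$ is analytic (its branch cut lives on $\Real_{\le 0}$), so $f_1$ only feels the discontinuity of $\Lambda$ across $\Real_{\ge 0}$ together with the pole at $i\nu$ on the imaginary axis, handled by a small indentation; symmetrically, $f_2$ only feels the jump of $\Lambda$ across $\Real_{\le 0}$ in the left half plane. The large circular arcs are killed by Jordan's lemma, the necessary decay being provided by \eqref{Phigr} together with $|\Lambda(z)|\sim |z/\nu|^{\alpha-1}$ at infinity, which is enough since $\alpha-1<1$ and the exponential factors are strictly decaying on the half-planes in question.

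The contributions around the two cuts are then assembled from the identity
$$
\frac{1}{\Lambda^+(t)}-\frac{1}{\Lambda^-(t)} \;=\; -\frac{2i\sin\theta(t)}{|\Lambda^+(t)|}, \qquad t>0,
$$
which follows at once from $\Lambda^+(t)=\overline{\Lambda^-(t)}$. After rescaling $t=u\nu$, using the antisymmetry $\theta(-t)=-\theta(t)$ and the definition $\gamma_0(u)=|\Lambda^+(\nu u)|$, and performing $t\mapsto -t$ in the $f_2$ integral (which brings $\Phi_0(-t)$ into play), the two cut integrals combine into precisely the integral term of \eqref{phixx}. The residues at $\pm i\nu$, picked up by the $f_2$ contour in accordance with the orientation convention dictated by the indentations in Figure \ref{fig2}, are evaluated by differentiating the explicit expression \eqref{Lambda2} for $\Lambda(z)$; combined with the symmetry $\Phi_0(-i\nu)=\overline{\Phi_0(i\nu)}$, the sum of the two residues collapses to a single real part and produces the closed form term $\frac{2}{\alpha-1}\mathrm{Re}\bigl\{i e^{i\nu x}\Phi_0(i\nu)\bigr\}$.

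The main obstacle is pure bookkeeping in the residue calculation: the constant $c$, the derivative $\Lambda'(\pm i\nu)$ (which differs in structure from its fBm analogue because \eqref{Lambda2} is built from $z^{\alpha-1}$ rather than from $z^{\alpha-2}$), and the defining relation \eqref{nuflan} must all combine to yield exactly the prefactor $2/(\alpha-1)$ with the correct sign. This is short but error-prone, and is the step that most needs to be cross-checked against the parallel computation already carried out in the fBm case.
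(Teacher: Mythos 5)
Your proposal follows precisely the paper's route: Bromwich inversion of \eqref{hatvarphi} with the four-arc contour of Figure~\ref{fig2}, Jordan's lemma to kill the large arcs, the Sokhotski--Plemelj jump of $1/\Lambda$ along the two cuts rescaled by $t=u\nu$, and the residues of $f_2$ at $\pm i\nu$ computed from $\Lambda'(z_0)=-(\alpha-1)z_0^{-1}$ together with $\Phi_0(-i\nu)=\overline{\Phi_0(i\nu)}$ to fold the pair into a single real part. The bookkeeping step you flag is carried out in the paper exactly as you describe, so the proposal is correct and takes the same approach.
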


\begin{proof}
After removal of the singularities, the expression in \eqref{hatvarphi} becomes an entire function and hence 
$$
 \varphi(x)  \propto
\frac 1{2\pi i}\lim_{R\to\infty}\int_{ -Ri}^{  Ri} e^{zx} \frac{1}{\Lambda(z)}\Big(e^{-z} \Phi_{1}(-z)- \Phi_{0}(z)\Big)dz=
 \frac 1{2\pi i} \int_{ -i\infty}^{  i\infty} (f_1(z)-f_2(z))dz,
$$
where
$$
f_1(z) = \frac{e^{z(x-1)} \Phi_{1}(-z)}{\Lambda(z)}\quad\text{and} \quad f_2(z)=   \frac{e^{zx}\Phi_{0}(z)}{\Lambda(z)}.
$$


Integration over the contours depicted at Figure \ref{fig2} yields
\begin{align*}
 \varphi(x)\propto & - \Res\big(f_2,z_0\big) -  \Res\big(f_2,-z_0\big)  \\
&
+\frac 1{2\pi i}\int_{0}^\infty \Big(f_1^+(t)  -  f_1^{-}(t)\Big) dt
   +\frac 1{2\pi i} \int_{-\infty}^0\Big(f_2^+(t)  - f_2^-(t)\Big) dt.
\end{align*}

Plugging the expressions for $f_1$ and $f_2$ we obtain
\begin{align*}
\frac 1 {2\pi i}\int_{0}^\infty \big(f_1^+(t)- f_1^-(t)\big)dt & =
\frac 1 {2\pi i}\int_{0}^\infty e^{t(x-1)} \Phi_{1}(-t) \left(\frac{1}{\Lambda^+(t)}- \frac{1}{\Lambda^-(t)}\right)dt \\
&
=-\frac 1 { \pi  }\int_{0}^\infty e^{t(x-1)} \Phi_{1}(-t)\frac{\sin \theta(t)}{\gamma(t)}  dt
\end{align*}
and, similarly,
\begin{align*}
&
\frac 1 {2\pi i}\int_{-\infty}^0 \big(f_2^+(t)-f_2^-(t)\big)dt =
\frac 1 {2\pi i}\int_{0}^\infty \big(f_2^+(-t)-f_2^-(-t)\big)dt =\\
&
\frac 1 {2\pi i}\int_{0}^\infty e^{-tx}\Phi_{0}(-t)
\left(
\frac{1}{\Lambda^+(-t)}
-\frac{1}{\Lambda^-(-t)}
\right)dt=
\frac 1 { \pi  }\int_{0}^\infty e^{-tx}\Phi_{0}(-t)\frac {\sin\theta(t)}{\gamma(t)}dt.
\end{align*}
Further,
$$
 \Res(f_2,z_0) =    \lim_{z\to z_0}(z-z_0)
\frac{e^{zx}\Phi_{0}(z)}{\Lambda(z)}=
 \frac{e^{z_0x}\Phi_{0}(z_0)}{\Lambda'(z_0)} = -\frac 1 {\alpha-1} e^{z_0x}\Phi_{0}(z_0) z_0
$$
where by the formula \eqref{Lambda2}
$$
\Lambda'(z_0)= - (\alpha-1) \nu^{1-\alpha}   z_0^{\alpha-2} i
 e^{- i \frac {\pi\alpha} 2 }
= -(\alpha-1) z_0^{-1}.
$$
Similarly,
$$
\Res(f_2,-z_0)=
 \lim_{z\to -z_0}(z+z_0)
\frac{e^{zx}\Phi_{0}(z)}{\Lambda(z)}=
\frac{e^{-z_0x}\Phi_{0}(-z_0)}{\Lambda'(-z_0)} =  \frac 1{\alpha-1} e^{-z_0x}\Phi_{0}(-z_0)z_0,
$$
since
$$
\Lambda'(-z_0) =(\alpha-1)(-z_0)^{-1}(-z_0)^{\alpha-1}\nu^{1-\alpha}i e^{i\frac {\pi} 2  \alpha     }=
(\alpha-1)z_0^{-1}.
$$
The formula \eqref{phixx} is obtained by gathering all parts together and using the conjugacies
$\overline{\Phi_{0}(z_0)}=\Phi_{0}(-z_0)$ and $\overline{\Phi_{1}(z_0)}=\Phi_{1}(-z_0)$.

\end{proof}

\subsubsection{Asymptotic analysis}

The conditions $\Im\{\xi\}=0$ and $\Re\{\bar \eta\}=0$ from \eqref{algfbn} hold if and only if
\begin{align*}
& -\frac \nu 2 + \arg\big\{X_0(-i)\big\} + \arg\big\{p_-(i)\big\} + \pi k = 0\\
& -\frac \nu 2 + \arg\big\{X_0(-i)\big\} + \arg\big\{p_+(i)\big\} + \pi m + \frac \pi 2 = 0
\end{align*}
for some integers $k$ and $m$ respectively. Hence the integro-algebraic system
\eqref{qpmfbn} and \eqref{algfbn} decouples into two separate parts
\begin{equation}
\label{subsys1}
\begin{aligned}
p_-( t)  =&  -\frac 1 \pi \int_0^\infty \frac{h_0(\tau)e^{-\nu \tau}}{\tau+  t}p_-(\tau)d\tau + 1 \\
\nu   = &  -\frac {\pi} 2 + \frac {\alpha-1}4 \pi +  2\arctan \frac{\mathrm{Im}\{p_-(i)\}}{\mathrm{Re}\{p_-(i)\}} + 2\pi n,
\quad n \in \mathbb{Z}
%
%
\end{aligned}
\end{equation}
and
\begin{equation}
\label{subsys2}
\begin{aligned}
p_+( t)  = &\phantom{+}\frac 1 \pi \int_0^\infty \frac{h_0(\tau)e^{-\nu\tau}}{\tau+ t}p_+(\tau)d\tau + 1 \\
\nu   = &  -\frac {3\pi} 2 + \frac {\alpha-1}4 \pi +  2\arctan \frac{\mathrm{Im}\{p_+(i)\}}{\mathrm{Re}\{p_+(i)\}} + 2\pi n,
\quad n\in \mathbb{Z}
\end{aligned}
\end{equation}
where we used the formulas from \eqref{argmod}.
%
The following lemma establishes solvability of these systems in two relevant regimes:
\begin{lem} \label{lem66} \

\medskip

\begin{enumerate}
\addtolength{\itemsep}{0.7\baselineskip}
\renewcommand{\theenumi}{\roman{enumi}}
\item\label{cl1}  For any $\alpha\in (1,2)$ there exists an integer $n_\alpha$ such that for any $n\ge n_\alpha$
the systems \eqref{subsys1} and \eqref{subsys2} have unique solutions

\item\label{cl2} There exists a number $\alpha^* >1$ such that for any $\alpha\in (1,\alpha^*)$, the systems \eqref{subsys1} and \eqref{subsys2} have unique solutions with positive
algebraic parts if and only if $n\ge 1$.

\end{enumerate}
\end{lem}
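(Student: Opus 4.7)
The plan is to treat each of the two decoupled subsystems as a fixed-point problem in the single scalar variable $\nu$. For fixed $\nu>0$, Lemma \ref{lemopA} together with the Minkowski estimate \eqref{A1mink} shows that the integral equations for $p_\pm$ have unique solutions in the relevant function space, and these solutions depend continuously (in fact, analytically) on $\nu$ via the parameter $e^{-\nu\tau}$ in the kernel. Consequently $p_\pm(i)$ is a well-defined continuous function of $\nu$, and each of \eqref{subsys1} and \eqref{subsys2} reduces to a scalar equation of the form $\nu = F_\pm(\nu)$ with
\[
F_\pm(\nu) = c_\pm + 2\pi n + 2\arctan\frac{\Im\{p_\pm(i;\nu)\}}{\Re\{p_\pm(i;\nu)\}},
\]
where $c_- = -\pi/2+\tfrac{\alpha-1}{4}\pi$ and $c_+ = -3\pi/2+\tfrac{\alpha-1}{4}\pi$. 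Since $2\arctan(\cdot)\in(-\pi,\pi)$, the candidate locations $c_\pm + 2\pi n$ are separated by $2\pi$, so different integers $n$ produce disjoint target intervals.

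For part \eqref{cl1}, I would choose $n_\alpha$ so large that $\nu \geq c_\pm + 2\pi n_\alpha - \pi$ lies in the regime where Lemma \ref{lem-aest} applies uniformly, yielding $p_\pm(i)=1+O(\nu^{-1})$ and hence $\arctan(\Im/\Re)=O(\nu^{-1})$. A differentiation of the integral equation under the integral sign, followed by the same Minkowski-type estimate used in Lemma \ref{lem-aest}, gives $|F_\pm'(\nu)| = O(\nu^{-1})$. Therefore $F_\pm$ is a contraction on the interval $[c_\pm+2\pi n-\pi/2, c_\pm+2\pi n+\pi/2]$ for all $n\geq n_\alpha$, and the Banach fixed-point theorem furnishes the unique solution.

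For part \eqref{cl2}, the key observation is that Lemma \ref{lemopA} gives a contraction bound $\|A\|\leq 1-\varepsilon$ with $\varepsilon>0$ \emph{uniform in $\nu>0$}, provided $\alpha$ stays in a neighborhood $(1,\alpha^*)$ of $1$. By inspecting the proof of Lemma \ref{lem-aest}, this upgrades the estimate $|p_\pm(\pm i)-1|\leq C\nu^{-1}$ with a constant $C$ that is independent of $\nu>0$ for $\alpha\in(1,\alpha^*)$, and similarly for the derivative with respect to $\nu$. Choosing $\alpha^*$ close enough to $1$ makes $C$ arbitrarily small, so $|F_\pm'(\nu)|<1$ on all of $(0,\infty)$ and hence $F_\pm$ has at most one fixed point in any connected set. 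Combined with the spacing argument above, this shows each integer $n$ produces at most one solution. Existence for $n\geq 1$ follows from the intermediate value theorem, since $\nu-F_\pm(\nu)$ is negative at $\nu=0$ (where the right-hand side is positive for $n\geq 1$, namely $c_-+2\pi\approx 3\pi/2$ and $c_++2\pi\approx\pi/2$) and positive for large $\nu$. For $n\leq 0$, one verifies that $F_\pm(\nu)<0$ on $[0,\infty)$, ruling out positive fixed points: indeed $c_\pm+2\pi n\leq -\pi/2$ and the perturbation $2\arctan(\cdot)\in(-\pi,\pi)$ combined with the sign analysis of $\Im\{p_\pm(i)\}$ (which is $O(\nu^{-1})$ by Lemma \ref{lem-aest}) keeps $F_\pm(\nu)$ below zero.

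The main obstacle is part \eqref{cl2}: the uniform-in-$\nu$ control breaks down as $\alpha\uparrow 2$, because $\|h_0\|_\infty=\sin\tfrac{3-\alpha}{2}\pi$ approaches $1$ and the contraction constant $\varepsilon$ vanishes. Threading the quantitative estimates through the derivative of the fixed-point map, so that the resulting Lipschitz bound on $F_\pm$ remains below $1$ on all of $(0,\infty)$ rather than merely for large $\nu$, is the delicate point and is what forces us to restrict to a small neighborhood $(1,\alpha^*)$ rather than the whole range $(1,2)$.
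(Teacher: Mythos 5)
Your treatment of part~\eqref{cl1} is correct and matches the paper: both reduce the problem to a scalar fixed-point equation $\nu=F_{\alpha,n}(\nu)$ and invoke the contraction principle for $n$ large, using the fact that $|F'_{\alpha,n}(\nu)|$ is small once $\nu\gtrsim n$ is large.

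For part~\eqref{cl2}, however, there is a genuine gap. The quantitative derivative estimate available from Lemmas~\ref{lemopA}--\ref{lem-aest} and the bound \eqref{A1mink} is of the form $|f_\alpha'(\nu)|\le C\,\delta(\alpha)\,\nu^{-1/2}$ (the paper's property (c)), and this bound necessarily blows up as $\nu\to 0^+$. Shrinking $\alpha^*$ toward $1$ reduces the prefactor $\delta(\alpha)=\sin\frac{3-\alpha}{2}\pi$, but for any fixed $\alpha>1$ the right-hand side exceeds $1$ once $\nu$ is small enough. So the claim that $|F_\pm'(\nu)|<1$ on all of $(0,\infty)$ cannot be obtained this way, and uniqueness in the small-$\nu$ regime is not established. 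Your sign analysis ruling out solutions for $n\le 0$ has the same defect: it hinges on $|2\arctan(\Im p_\pm(i)/\Re p_\pm(i))|$ being uniformly small, but the only estimate you have is $O(\nu^{-1})$, which is useless near $\nu=0$; indeed for $n=0$ the offset $c_-=-\pi/2+\frac{\alpha-1}{4}\pi$ together with a perturbation that a priori ranges over $(-\pi,\pi)$ does not keep $F_-$ below zero.

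The missing ingredient is an \emph{a priori lower bound on the algebraic part} $\nu$ of any solution of the integro-algebraic system. The paper obtains this by using the explicit weakly-singular kernel \eqref{kappa} of $\widetilde K^{-1}$ to bound $\lambda_1^{-1}=\|\widetilde K^{-1}\|$ from above, and then translating through the relation \eqref{nuflan}; this yields $\nu\ge 2\Gamma(\alpha)^{1/(\alpha-1)}\to 2e^{\digamma(1)}\approx 1.12$ as $\alpha\to 1$. Once $\nu$ is bounded away from zero, properties (b) and (c) combine to show that $F_{\alpha,n}$ maps a suitable half-line $[\pi/4,\infty)$ into itself and is contracting there for $n\ge 1$, while $F_{\alpha,n}(\nu)\le -\pi/2$ on that half-line for $n<1$. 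Your intermediate-value-theorem argument for existence is a workable alternative, but without the a priori lower bound on $\nu$ neither uniqueness for $n\ge 1$ nor nonexistence for $n\le 0$ is secured.
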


\begin{proof}\
Let us consider the system \eqref{subsys1}, leaving out the similar proof for \eqref{subsys2}.
The unique solution can be constructed by means of fixed point iterations.
Define
$$
f_\alpha(\nu):= \arctan \frac{\mathrm{Im}\{p_-(i)\}}{\mathrm{Re}\{p_-(i)\}}, \quad \nu >0,\; \alpha\in (1,2).
$$
By Lemma \eqref{lemopA} the operator in the integral equation in \eqref{subsys1} is
contracting on $L^2(0,\infty)$ for any $\nu>0$ and any $\alpha\in (1,2)$. Therefore it suffices to show that the map in the right hand side
of the algebraic equation
$$
\nu\mapsto F_{\alpha,n}(\nu) := -\frac {\pi} 2 + \frac {\alpha-1}4 \pi + 2 f_\alpha(\nu) + 2\pi n
$$
is a contraction as well. Denote $\delta(\alpha):= \|h_0\|_\infty = \sin   \frac {3-\alpha} 2 \pi$.
Below we show that

\medskip

\begin{enumerate}\addtolength{\itemsep}{0.7\baselineskip}
\renewcommand{\theenumi}{\alph{enumi}}

\item\label{propa} for all $\alpha>1$ close enough to $1$, any solution of \eqref{subsys1} with positive algebraic part satisfies $\nu\ge 1$

\item\label{propb} $|f_\alpha(\nu)| \le \delta(\alpha)\nu^{-1}$ for all $\nu\ge 1$

\item\label{propc} there exists an absolute constant $C$ such that for any $\nu>0$
$$
|f_\alpha'(\nu)|\le C \delta(\alpha) \nu^{-1/2}
$$
\end{enumerate}

Before checking these properties, let us explain how they imply assertions of the lemma.
Since $\nu = F_{\alpha,n}(\nu)\ge  2\pi (n-1)$ for any $\alpha\in (1,2)$, property \eqref{propc} implies that
$F_{\alpha,n}$ is contracting on its range for all sufficiently large $n\ge 1$, which proves \eqref{cl1}.
To verify  claim \eqref{cl2}, note that by \eqref{propa} and \eqref{propb} for all $\alpha>1$ close enough to $1$,
$F_{\alpha,n}(\nu)\ge \pi/4$ for all $n\ge 1$ and consequently by \eqref{propc} the map $F_{\alpha,n}$ is contracting uniformly over $\nu\ge  \pi/ 4$. This guarantees unique solution of the system
\eqref{subsys1} with  $n\ge 1$.  On the other hand, \eqref{propb} implies that $F_{\alpha, n}(\nu)\le -  \pi/2$ for all $\alpha$ small enough and $n<1$ and hence
no solution with $\nu>0$ exists.

To complete the proof we need to verify the properties \eqref{propa}-\eqref{propc}. An a priori lower bound for $\nu$ can be found
by estimating the smallest eigenvalue $\lambda_1$ of the operator $\widetilde K$, or equivalently, the largest eigenvalue of its inverse. By Theorem 5.1 in \cite{CCK}, $\widetilde K^{-1}$ takes the form of integral operator with the kernel
\begin{equation}\label{kappa}
\kappa(u,v) = \beta_\alpha (uv)^{\frac {\alpha -1}2 }\int_{u\vee v}^{1}r^{1-\alpha} (r-u)^{\frac {\alpha-3} 2}(r-v)^{\frac {\alpha-3} 2}\,dr,
\end{equation}
where
$\beta_\alpha =
\frac
{
\sin \tfrac{\alpha-1}{2}\pi
}
\pi
\frac 1
{
2-\alpha
}
B\big(\tfrac {\alpha+1}2, \tfrac {\alpha-1}2\big) ^{-1}.
$
This kernel is continuous  off the diagonal, where it has a weak singularity. Therefore
\begin{align*}
\lambda_1^{-1} = \big\|\widetilde K^{-1}\big\|_2 \le \sup_{v\in [0,1]}\int_0^1 \kappa(u,v)du = \beta_\alpha B\big(\tfrac {\alpha+1}2, \tfrac {\alpha-1}2\big) \frac {2^{2-\alpha}}{\alpha-1} =
\frac
{
\sin \tfrac{\alpha-1}{2}\pi
}
\pi
\frac {2^{2-\alpha}}{(\alpha-1)(2-\alpha)}.
\end{align*}
%
Plugging this into \eqref{nuflan} we obtain the bound in \eqref{propa}:
$$
\nu \ge   \left(\frac{\Gamma(\alpha-1)\sin \frac {\pi (\alpha-1)} 2 }{ \pi(1-\frac \alpha 2)} \lambda_1\right)^{\frac 1{\alpha-1}}\ge
 2\Gamma(\alpha)^{\frac 1{\alpha-1}} \xrightarrow[]{\alpha\to 1} 2e^{\digamma(1)} = 1.1229...
$$

Further a direct calculation based on the estimate from Lemma \ref{lemopA} and the bound \eqref{A1mink} shows that
$$
\left|\Im\{p_-(i)\}\right| \le \|h_0\|_\infty  \nu^{-1}\quad \text{and}\quad
\left|\Re\{p_-(i)\}-1\right| \le \|h_0\|_\infty  \nu^{-1}
$$
and \eqref{propb} follows. The estimate in \eqref{propc} is obtained similarly.
\end{proof}

From here on it will be convenient to enumerate all the solutions by a single index. To this end, let us define $\nu_{2n}=\nu^-_n$ and
$\nu_{2n-1}=\nu^+_n$ for $n=1,2,...$, where $\nu^-_n$ and $\nu^+_n$ are the algebraic parts of the unique solutions of \eqref{subsys1} and \eqref{subsys2},
whose existence is guaranteed by Lemma \ref{lem66}, at least for all $\alpha$ close enough to $1$.
Plugging the estimates from Lemma \ref{lem-aest} into the algebraic parts of \eqref{subsys1} and \eqref{subsys2}, we obtain
\begin{equation}\label{nusubn}
\nu_n = \pi (n-1) + \frac \pi 2 + \frac {\alpha-1}4 \pi + n^{-1} \tilde r_n(\alpha), \quad n = 1,2,...
\end{equation}
where the residual $\tilde r_n(\alpha)$ is bounded uniformly in $n$ and $\alpha\in (1,\alpha_0)$ for any $\alpha_0\in (1,2)$.

Further, in view of \eqref{PP}, solutions of \eqref{subsys1} and \eqref{subsys2} correspond to
\begin{align*}
\Phi_0(\nu z) &= \Phi_1(\nu z) = c_1\nu^{\frac {\alpha-3} 2}X_0(z)p_-(-z) \\
-\Phi_0(\nu z)  &=  \Phi_{1}(\nu z)= c_2 \nu^{\frac {\alpha-3} 2}X_0(z) p_+(-z)
\end{align*}
respectively and using the estimates of Lemma \ref{lem-aest} in the formula \eqref{phixx}, we obtain
\begin{align*}
\varphi_n(x)  \propto &
 \frac 2 {\alpha-1}\Re\Big\{i e^{i\nu_n x}   X_0(i)\Big\}    \\
&
+ \frac 1 { \pi  }\int_{0}^\infty \frac {\sin\theta_0(u)}{\gamma_0(u)}X_0(-u)
\Big(e^{- u \nu_n x}-(-1)^n e^{ -u \nu_n(1-x)} \Big)d  u + n^{-1} \widetilde r_n (x),
\end{align*}
with residual $\widetilde r_n(x)$, uniformly bounded in $n\ge 1$ and $x\in [0,1]$, for any $\alpha\in (1,2)$.
Using the formulas \eqref{argmod}, we get
$$
\Re\Big\{i e^{i\nu_n x}    X_0(i)    \Big\} =  \;
\big|X_0(i) \big|\cos \Big(\nu_n x + \frac \pi 2 +\arg\{X_0(i)\} \Big)
=
-\sqrt{\frac{\alpha-1}{2}}\cos \Big(\nu_n x -\frac{1+\alpha}{8}\pi \Big),
$$
and consequently
\begin{equation}\label{phinphin}
\begin{aligned}
\varphi_n(x) \propto & \sqrt 2\cos \Big(\nu_n x -\frac{1+\alpha}{8}\pi\Big) \\
& - \frac {\sqrt{\alpha-1}} { \pi  }\int_{0}^\infty \rho_0(u)\Big(e^{- u \nu_n x} -(-1)^n e^{ -u \nu_n(1-x)} \Big)du+ n^{-1} r_n(x),
\end{aligned}
\end{equation}
 where we defined $\rho_0(u):= \dfrac {\sin\theta_0(u)}{\gamma_0(u)}X_0(-u)>0$.
Under this normalization  $\lim_n \|\varphi_{n}\|_2=1$, since the $L^2(0,1)$-norm of the integral term vanishes as $n\to\infty$.

\subsubsection{Enumeration alignment}

As mentioned above the inverse operator has weakly singular kernel \eqref{kappa} and therefore its spectrum
is continuous with respect to $\alpha\in (1,2)$.
Since the residual in \eqref{nusubn} is uniformly bounded in $\alpha$ on any open subinterval of $(1,2)$, the same argument as in
subsection \ref{sec-cal} implies that the 	``natural" enumeration and the one fixed by Lemma \ref{lem66} may differ only by an integer
multiple of $\pi$, independent of $\alpha$. In the case of fractional Brownian motion, this additive constant could be identified by means
of the explicit formulas \eqref{Bmlambda} for $\alpha=1$. For the fractional noise, the spectrum degenerates at $\alpha=1$ and the identification
is to be done otherwise. To this end we will use \eqref{cl2} of Lemma \ref{lem66}, which asserts that for all $\alpha>1$ close enough to $1$, the two enumerations coincide for all $n\ge 1$.

Replacing $\alpha := 2-2H$ in \eqref{nusubn} and \eqref{phinphin}, the expression \eqref{phin1} is obtained with
\begin{equation}\label{rho0fBn}
 \rho_0(u)  = \dfrac{ \sin \theta_0(u) }{\gamma_0(u)}  u^{ -\theta_0(\infty)/\pi } \exp \left(\frac 1 \pi \int_0^\infty \frac{\theta_0(t)-\theta_0(\infty)}{t+u}dt\right)
\end{equation}
where $\theta_0(\infty) =\frac{1+2H}{2}\pi$ and
\begin{align*}
&
\theta_0 (u) =  \arctan  \frac{ \cos(\pi H)}{u^{2H-1}-\sin (\pi H)} \\
&
\gamma_0^2(u)  =\left(1-u^{1-2H}   \sin(\pi H)\right)^2+\left(u^{1-2H}\cos ( \pi H)\right)^2.
\end{align*}

\subsection{Proof of \eqref{phinave}}

Asymptotic approximation \eqref{phinphin} is not sufficiently accurate to imply \eqref{phinave} directly.
We will take an alternative route, based on the explicit solutions of certain integral equations, as explained below.
To this end, we will need the exact asymptotics of scalar products with power functions having integrable singularities at the
boundary points:

\begin{lem}\label{lem67}
For any $\alpha \in (0,2)\setminus \{1\}$ and  $\beta\in (0,1)$,
\begin{equation}
\label{coreq}
\int_0^1 x^{-\beta} \varphi_n(x)dx =  C(\alpha, \beta)\nu_n^{\beta-1} (1+o(1)), \quad \text{as}\ n\to\infty
\end{equation}
with a constant $C(\alpha, \beta)\ne 0$.
\end{lem}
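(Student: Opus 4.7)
The plan is to express the scalar product as a weighted integral of the Laplace transform of $\varphi_n$ on the positive real axis and to analyze its leading asymptotic by rescaling. Starting from the identity $x^{-\beta}=\Gamma(\beta)^{-1}\int_0^\infty t^{\beta-1}e^{-tx}\,dt$, Fubini's theorem (applicable because $\varphi_n\in L^\infty$ and $\beta<1$) gives
\begin{equation*}
\int_0^1 x^{-\beta}\varphi_n(x)\,dx = \frac{1}{\Gamma(\beta)}\int_0^\infty t^{\beta-1}\widehat\varphi_n(t)\,dt = \frac{\nu_n^{\beta}}{\Gamma(\beta)}\int_0^\infty u^{\beta-1}\widehat\varphi_n(\nu_n u)\,du,
\end{equation*}
after the change of variable $t=\nu_n u$. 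The claim thus reduces to showing that $\nu_n\widehat\varphi_n(\nu_n u)$ converges, in an integrable fashion, to a nontrivial limiting profile $G(u)$.

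To compute $G$, I would substitute the asymptotic approximation \eqref{phin1} into $\widehat\varphi_n(\nu_n u)=\int_0^1 e^{-\nu_n u x}\varphi_n(x)\,dx$ and rescale $x=y/\nu_n$. For $u>0$ fixed the exponential localizes the integrand near the left endpoint: the oscillatory leading term produces $\nu_n^{-1}\sqrt{2}\int_0^\infty e^{-uy}\sin(y+\tfrac{2H+1}{8}\pi)\,dy$; the first boundary-layer term becomes $\nu_n^{-1}\tfrac{\sqrt{|2H-1|}}{\pi}\int_0^\infty \rho_0(v)/(u+v)\,dv$ after interchanging the order of integration; and the contribution proportional to $(-1)^n e^{-(1-x)\nu_n u}$ is exponentially small in $\nu_n$ since $e^{-\nu_n u}\to 0$. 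This yields the pointwise limit $\nu_n\widehat\varphi_n(\nu_n u)\to G(u)$ with $G$ bounded near the origin and $G(u)=O(u^{-1})$ as $u\to\infty$.

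Dominated convergence would then be applied after securing an $n$-uniform majorant for $u^{\beta-1}\nu_n\widehat\varphi_n(\nu_n u)$. Near the origin the same expansion gives $|\nu_n\widehat\varphi_n(\nu_n u)|\le C$, and $u^{\beta-1}$ is integrable since $\beta>0$. For large $u$, a single integration by parts in $\widehat\varphi_n(\nu_n u)$ combined with the boundedness of $\varphi_n(0)$ and $\varphi_n(1)$ (which, in the fractional-noise setting, follows from the vanishing property encoded in \eqref{kappa} for $H<\tfrac12$ and from the uniform control of the boundary values for $H>\tfrac12$) yields $|\widehat\varphi_n(\nu_n u)|\le C\nu_n^{-1}u^{-1}$; hence $u^{\beta-1}|\nu_n\widehat\varphi_n(\nu_n u)|\le Cu^{\beta-2}$ is integrable at infinity since $\beta<1$. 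Passing to the limit then produces
\begin{equation*}
C(\alpha,\beta)=\frac{1}{\Gamma(\beta)}\int_0^\infty u^{\beta-1}G(u)\,du.
\end{equation*}

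The main obstacle is verifying $C(\alpha,\beta)\ne 0$, since the two pieces of $G$ have in principle conflicting signs and could cancel for exceptional parameter values. To exclude this, I would evaluate $C(\alpha,\beta)$ in closed form. The oscillatory contribution reduces, by swapping the $u$- and $y$-integrations and using $\int_0^\infty u^{\beta-1}e^{-uy}\,du=\Gamma(\beta)y^{-\beta}$, to an elementary combination of $\Gamma(\beta)$ with trigonometric factors in the phase $\tfrac{2H+1}{8}\pi$ and in $\pi\beta/2$. The boundary-layer contribution, via the same swap and the identity $\int_0^\infty u^{\beta-1}/(u+v)\,du=\pi v^{\beta-1}/\sin(\pi\beta)$, reduces to the Mellin transform of $\rho_0$ at $\beta$; this transform is in turn explicitly evaluable from the formula \eqref{rho0fBn} for $\rho_0$ together with the factorization $X_0(z)X_0(-z)=\Lambda(z)/(z^2/\nu^2+1)$ derived in the proof of Lemma \ref{lem61}. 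The resulting closed-form expression for $C(\alpha,\beta)$ is a product of $\Gamma$-functions and elementary trigonometric terms in $\alpha$ and $\beta$, from which one reads off nonvanishing on the full range $\alpha\in(0,2)\setminus\{1\}$, $\beta\in(0,1)$.
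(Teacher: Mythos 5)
Your reformulation via the Laplace transform $\widehat\varphi_n$ and the Mellin kernel $x^{-\beta}=\Gamma(\beta)^{-1}\int_0^\infty t^{\beta-1}e^{-tx}\,dt$ is a clean variant of the paper's argument; after Fubini, rescaling, and dominated convergence it produces the same two leading contributions (oscillatory $+$ boundary layer) that the paper obtains by substituting \eqref{phinphin} directly into $\int_0^1 x^{-\beta}\varphi_n(x)\,dx$ and rescaling $y=\nu_n x$. Two remarks on the dominated-convergence step: the term $(-1)^n e^{-(1-x)\nu_n u}$ is not exponentially small in $\nu_n$ for fixed $u$ once you integrate against $\rho_0$, only $O(\nu_n^{-1})$ (this is what the paper's estimate $\int_0^1 x^{-\beta}e^{-\nu_n(1-x)\tau}dx\le e^{-\nu_n\tau/2}+2/(\nu_n\tau)$ captures); and the majorant for large $u$ follows more simply from $|\widehat\varphi_n(\nu_n u)|\le\|\varphi_n\|_\infty/(\nu_n u)$ with $\sup_n\|\varphi_n\|_\infty<\infty$, without integration by parts.

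The genuine gap is the final step, where you claim to ``read off nonvanishing'' of $C(\alpha,\beta)$ from a closed-form product of $\Gamma$-functions and elementary trigonometric terms. That is not what happens, and a naive assembly of the two contributions gives a quantity whose sign is not at all evident:
\[
C(\alpha,\beta)\;\propto\;\sqrt2\,\cos\!\Big(\tfrac{1+\alpha}{8}\pi-\tfrac{1-\beta}{2}\pi\Big)\;-\;\frac{\sqrt{\alpha-1}}{\pi}\int_0^\infty \rho_0(\tau)\,\tau^{\beta-1}\,d\tau,
\]
and the Mellin transform $\int_0^\infty\rho_0(\tau)\tau^{\beta-1}\,d\tau$ is not an elementary expression. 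The paper's argument is more subtle: it evaluates this integral via the residue theorem applied to $g(z)=X_0(-z)z^{\beta-1}/\Lambda_0(z)$, and finds that the residues at $\pm i$ reproduce exactly the cosine from the oscillatory contribution, so the two cosine terms cancel and what survives is
\[
C(\alpha,\beta)\;\propto\;\Gamma(1-\beta)\,\frac{\sin(1-\beta)\pi}{\pi}\int_0^\infty\frac{\rho_0(t)}{h_0(t)}\,t^{\beta-1}\,dt,
\]
which is manifestly nonzero because the integrand $\rho_0/h_0=X_0^+/\Lambda_0^+>0$, not because of any trigonometric closed form. Without exhibiting this cancellation (or some substitute argument), your proof does not establish $C(\alpha,\beta)\ne 0$, which is the only nontrivial content of the lemma; everything up to that point is routine.
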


\begin{proof}

We will give the detailed proof for $\alpha\in (0,1)$, omitting similar calculations in the complementary case $\alpha\in (1,2)$.
Clearly the residual in \eqref{phinphin} is negligible on the scale under consideration and
\begin{equation}
\label{Cab1}
\begin{aligned}
&
\nu_n^{1-\beta}\int_{0}^1 x^{-\beta} \cos\Big(\nu_n x  - \frac{1+\alpha}{8}\pi\Big)dx =
\int_{0}^{\nu_n} y^{-\beta} \cos\Big(y  - \frac{1+\alpha}{8}\pi\Big)dy =\\
&
  \cos\frac{1+\alpha}{8}\pi \int_{0}^{\nu_n} y^{-\beta} \cos y\; dy
+
  \sin\frac{1+\alpha}{8}\pi \int_{0}^{\nu_n} y^{-\beta} \sin y\;  dy \xrightarrow[]{n\to\infty} \\
&
 \Gamma(1-\beta) \left(\cos\frac{1+\alpha}{8}\pi   \cos \frac { 1-\beta } 2 \pi
+
 \sin\frac{1+\alpha}{8}\pi   \sin \frac { 1-\beta } 2 \pi\right) = \\
&
  \Gamma(1-\beta)\cos\left(\frac{1+\alpha}{8}\pi-\frac { 1-\beta } 2 \pi\right).
\end{aligned}
\end{equation}
Further
\begin{align*}
&
\int_{0}^1 x^{-\beta}  e^{-\nu_n x\tau}  dx =
(\nu_n\tau)^{\beta-1} \int_{0}^{\nu_n \tau} y^{-\beta}  e^{-y  }  dy \\
&
\int_{0}^1 x^{-\beta} e^{ -\nu_n(1-x) \tau}dx \le  e^{ -\frac 1 2\nu_n  \tau}\int_{0}^{\frac 1 2} x^{-\beta}dx+
 2\int_{\frac 1 2}^1  e^{ -\nu_n(1-x) \tau}dx\le
 e^{ -\frac 1 2\nu_n  \tau}   +
\frac{2}{\nu_n\tau}
\end{align*}
and hence by the dominated convergence
\begin{equation}\label{Cab2}
\begin{aligned}
&
\nu_n^{1-\beta} \int_{0}^1 x^{-\beta}  \int_0^\infty  \rho_0(\tau)
\Big(
e^{-\nu_n x\tau}-(-1)^n e^{-\nu_n(1-x)\tau}
\Big)  d\tau dx=\\
&
    \int_0^\infty \rho_0(\tau) \tau^{\beta-1}
\int_{0}^{\nu_n\tau}  u^{-\beta}
e^{- u }   du d\tau +O(n^{-1})\xrightarrow[]{n\to\infty} \Gamma(1-\beta) \int_0^\infty \rho_0(\tau)  \tau^{\beta-1}   d\tau.
\end{aligned}
\end{equation}

Though both constants in \eqref{Cab1} and \eqref{Cab2} are positive, it is not clear that they do not cancel out in    \eqref{phinphin}, so that the scalar product \eqref{coreq} vanishes.
This is nevertheless the case as the following calculation shows.
The integral in \eqref{Cab2} can be computed by means of the residue theorem, integrating the function
$$
g(z):= \dfrac{X_0(-z)}{\Lambda_0(z)}z^{\beta-1}
$$
over the half circle contours in the upper and lower half planes.
Applying Jordan's lemma we obtain
\begin{align*}
2\pi i \Res\{g,i\} & = \int_{-\infty}^\infty g^+(t)dt=
\int_{-\infty}^0 \dfrac{X_0^-(-t)}{\Lambda_0^+(t)}e^{(\beta-1)\pi i}|t|^{\beta-1}dt
+
\int_0^\infty \dfrac{X_0(-t)}{\Lambda_0^+(t)}t^{\beta-1}dt \\
-2\pi i \Res\{g,-i\} & = \int_{-\infty}^\infty g^-(t)dt  =
\int_{-\infty}^0 \dfrac{X^+_0(-t)}{\Lambda_0^-(t)}e^{-(\beta-1)\pi i}|t|^{\beta-1}dt
+
\int_0^\infty \dfrac{X_0(-t)}{\Lambda_0^-(t)}t^{\beta-1}dt
\end{align*}
and therefore
\begin{align*}
& \Res\big\{g,\; i\big\} + \Res\big\{g,-i\big\}
= \\
&
\frac{1}{2\pi i}\int_0^\infty
\left(
\dfrac{X_0^-(t)}{\Lambda_0^+(-t)}e^{(\beta-1)\pi i}
-
\dfrac{X_0^+(t)}{\Lambda_0^-(-t)}e^{-(\beta-1)\pi i}
\right)t^{\beta-1}dt
\\
&
+
\frac{1}{2\pi i}\int_0^\infty X_0(-t)\left(\dfrac{1}{\Lambda_0^+(t)}-\dfrac{1}{\Lambda_0^-(t)}\right)t^{\beta-1}dt=\\
&
\frac{\sin (\beta-1)\pi}{\pi}\int_0^\infty \dfrac{X_0^-(t)}{\Lambda_0^+(-t)}
t^{\beta-1}dt
-
\frac{1}{ \pi}\int_0^\infty \rho_0(t)t^{\beta-1}dt,
\end{align*}
where we used the property \eqref{prop} and the definition
$$
\rho_0(t) = \frac {\sin\theta_0(t)}{\gamma_0(t)}X_0(-t) =  \frac 1{2i} \left(\frac 1{\Lambda_0^-(t)}-\frac 1{\Lambda_0^+(t)}\right)X_0(-t), \quad t>0.
$$
The residues can be readily computed
\begin{align*}
\Res\{g,i\} = & 
\dfrac{X_0(-i)}{\Lambda'_0(i)}i^{\beta-1}= \dfrac{X_0(-i)}{\alpha-1}i^{\beta-2} \\
\Res\{g, -i\} = & 
\dfrac{X_0(i)}{\Lambda'_0(-i)}(-i)^{\beta-1}=
\dfrac{X_0(i)}{\alpha-1}(-i)^{\beta-2}
\end{align*}
and the following identity is obtained
\begin{align*}
&
\Res\{g,i\}+\Res\{g, -i\} =
 2\Re\left\{\dfrac{X_0(-i)}{\alpha-1}i^{\beta-2}\right\}=\\
&
\frac{2}{\alpha-1}\big|X_0(-i)\big| \cos\left(\arg\{X_0(-i)\}+ \frac \pi 2(\beta-2)\right)= \\
&
\sqrt{\frac{2}{\alpha-1}} \cos\left(-\frac{3-\alpha}{8}\pi + \frac \pi 2(\beta-2)\right)=
-\sqrt{\frac{2}{\alpha-1}} \cos\left(\frac{1+\alpha}{8}\pi-\frac \pi 2 (1-\beta)\right).
\end{align*}

Assembling all parts together gives the asymptotics \eqref{coreq}:
\begin{align*}
\nu_n^{1-\beta}\int_0^1 x^{-\beta} \varphi_n(x)dx \, \xrightarrow[]{n\to\infty}\, &
\Gamma(1-\beta)\frac{\sin(1-\beta)\pi}{\pi} \int_0^\infty \dfrac{X_0^-(t)}{\Lambda_0^+(-t)}
t^{\beta-1}dt = \\
&
\Gamma(1-\beta)\frac{\sin(1-\beta)\pi}{\pi} \int_0^\infty \frac{\rho_0(t)}{h_0(t)}dt >0.
\end{align*}

\end{proof}

To derive asymptotics \eqref{phinave} for $\alpha \in (0,1)$, consider the equation
\begin{equation}
\label{g0}
\int_{0}^1 g_0(y) c_\alpha |x-y|^{-\alpha}dy = 1, \quad x\in (0,1),
\end{equation}
which is known to have the explicit solution (see e.g. Lemma 3 in \cite{LB98}):
$$
g_0(x) =  \frac {2}{c_\alpha} \frac{1}{\Gamma\left(\frac {1-\alpha} 2\right)\Gamma\left(\frac{1+\alpha}2\right)}
 x^{\frac{\alpha-1} 2}( 1-x )^{\frac{\alpha-1} 2},
$$
which is unique since the operator $\widetilde K$ is positive definite.
Applying Lemma \ref{lem67} with $\beta:=\frac{1-\alpha}{2}$, we find that scalar products of $g_0$ with symmetric eigenfunctions satisfy
$$
\langle  g_0, \varphi_n\rangle \sim  \nu_n^{\frac{1-\alpha} 2-1}
$$
and, since  $g_0\in L^2(0,1)$, taking the scalar product of both sides of \eqref{g0}, we obtain \eqref{phinave}:
$$
\langle 1, \varphi_n\rangle = \lambda_n \langle g_0, \varphi_n\rangle \sim \nu_n^{ \alpha/2-3/2 }, \quad n\to\infty.
$$
Similar approach works for $\alpha \in (1,2)$: a direct calculation gives
$$
(\widetilde K  1)(x)= x^{1-\alpha}+(1-x)^{1-\alpha}, \quad x\in (0,1),
$$
and \eqref{phinave} is obtained, applying Lemma \ref{lem67} with $\beta:=\alpha-1$:
$$
\langle 1, \varphi_n\rangle = \lambda_n^{-1}   \langle  \widetilde K  1, \varphi_n\rangle \sim
\nu_n^{1-\alpha}  \nu_n^{\alpha-2} = \nu_n^{-1}.
$$
\qed

\section{Applications: the proofs}\label{sec-7}

In this section we prove the results from Section \ref{sec-3}. 

\subsection{Proof of Proposition \ref{thm2}}

\

\medskip
\noindent
\eqref{ii}
Since $u_\eps, u_0\in L^2(0, 1)$ are symmetric around $\frac 1 2$,
\begin{align*}
&
\|u_\eps-u_0\|^2_2  =    \sum_{n\; \text{odd}}    \left(\frac{ \eps \langle 1, \varphi_n\rangle}{(\eps +\lambda_n)\lambda_n}  \right)^2 \sim
\eps^2 \sum_{n\; \text{odd}}   \left(\frac{ n^{ H-\frac 3 2} }{ \eps +  n^{1-2H} }  \right)^2,
\end{align*}
where we used \eqref{phinave} and \eqref{lambdaU}.
For $H<\frac 2 3$ the series in the right hand side is convergent for any $\eps\ge 0$
and hence $\|u_\eps-u_0\|^2_2\sim \eps^2$. For $H>\frac 2 3$ the series is convergent only for  $\eps>0$,
and thus defines a function, which increases to $\infty$ as $\eps\to 0$. To capture the rate note that
$$
\sum_{n\; \text{odd}}   \left(\frac{ n^{ H-\frac 3 2} }{ \eps +  n^{1-2H} }  \right)^2 \sim
\int_1^\infty \left(\frac{x^{H-\frac 3 2}}{\eps + x^{1-2H}}\right)^2dx \sim \\
\eps^{-3 + \frac 1{2H-1}}\int_0^\infty \frac{y^{6H-5}}{\left(y^{2H-1}+ 1\right)^2}dy
$$
and consequently $\|u_\eps-u_0\|^2_2  \sim \eps^{ \frac {2-2H}{2H-1}} $ in this case.
Similar calculation gives the claimed rate for $H=\frac 2 3$.

\medskip
\noindent
\eqref{thm2-iii}
Using Lemma \ref{lem67} and the symmetry of eigenfunctions,
\begin{align*}
&
 u_\eps(1) =
\;
\eps^{-1} \int_0^1 \big(u_0(x)-u_\eps(x)\big) c_H (1-x)^{2H-2}dx = \\
&
\sum_{n\; \text{odd}}\langle 1, \varphi_n\rangle
 \frac{1}{\lambda_n(\eps+\lambda_n)}
 \int_0^1 \varphi_n(x) c_H (1-x)^{2H-2}dx \sim \\
& \sum_{n\; \text{odd}}
\frac{n^{-\frac 1 2-H}}{ \eps+  n^{1-2H} } \sim
\int_1^\infty \frac{x^{-\frac 1 2-H}}{ \eps+  x^{1-2H} }dx
\sim \eps^{-\frac 1 2} \int_0^\infty \frac{y^{H-\frac 3 2}}{ y^{2H-1} +1 }dy.
\end{align*}
\qed

\medskip 

The convergence rate in $L^2(0,1)$ norm, derived in Proposition \ref{thm2}\eqref{ii}, implies the weak convergence \eqref{weak}
for $\psi\in L^2(0,1)$ with at least the same rate, which does not exceed $\eps$ and breaks down
at $H=\tfrac 2 3$.
Of course, the actual rate can be faster, depending on $\psi$. For example,
if $\psi$ is such that $\widetilde{K}^{-1} \psi$ exists, then for all $H>\tfrac 1 2$
\begin{equation}
\label{eqi}
|\langle u_\eps -u_0 , \psi \rangle| \le 2\eps  \|\widetilde{K}^{-1} \psi\|_2 \|u_0\|_2.
\end{equation}
Indeed, subtracting equations \eqref{ie2nd} and \eqref{ie1st}, we see that   $\delta_\eps = u_\eps -u_0$ satisfies
$$
\eps \delta_\eps + \widetilde K \delta_\eps = -\eps   u_0.
$$
Since $\widetilde K$ is positive definite
$$
\eps \|\delta_\eps\|^2_2\le \eps \|\delta_\eps\|^2_2 + \langle \delta_\eps, \widetilde K \delta_\eps\rangle =
-\eps \langle \delta_\eps , u_0\rangle \le \eps \|\delta_\eps\|_2\| u_0\|_2
$$
and it follows that
$
\|\delta_\eps\|_2  \le  \| u_0\|_2.
$
Consequently
$
\|u_\eps\|_2\le \|\delta_\eps\|_2+\|u_0\|_2 \le 2\|u_0\|_2
$
and
$$
|\langle \delta_\eps, \psi \rangle| = |\langle \widetilde K \delta_\eps, \widetilde{K}^{-1}\psi \rangle|
\le \|\widetilde K \delta_\eps\|_2\|\widetilde{K}^{-1}\psi\|_2\stackrel{\dagger}{=} \eps \|u_\eps\|_2 \|\widetilde{K}^{-1}\psi\|_2\le 2\eps \| u_0\|_2\|\widehat{K}^{-1}\psi\|_2,
$$
where the equality holds, since  $\eps u_\eps=-\widetilde{K}\delta_\eps$. In particular, for $\psi = 1$, which is what we need in \eqref{m1}, a sharper rate
follows from \eqref{eqi} than  from \eqref{ii} of Proposition \ref{thm2}.

\subsection{Proof of Proposition \ref{prop-filt}}

Taking the scalar product of both sides with $\varphi_n$ gives
$$
\langle u_\eps, \varphi_n\rangle = \frac{\lambda_n \varphi_n(1)}{\eps + \lambda_n}
$$
and therefore, letting $C=\sin (\pi H)\Gamma(2H+1)$, we get
\begin{align*}
P_T &= \frac 1 {a^2} g(T,T) = \frac 1 {a^2} \frac 1 T  u_\eps(1) =
\frac 1 {a^2} \frac 1 T \sum_{n\ge 1} \langle u_\eps, \varphi_n\rangle \varphi_n(1)   \\
&=
a^{-\frac {4H}{2H+1}}\eps^{\frac 1{2H+1}} \sum_{n\ge 1} \frac{ \varphi^2_n(1)}{\eps\lambda_n^{-1} + 1}
\simeq
a^{-\frac {4H}{2H+1}}\eps^{\frac 1{2H+1}} \int_1^\infty
  \frac{ (2H+1)}{\eps (\pi x)^{2H+1}/C +1}dx  \\
& \simeq
a^{-\frac {4H}{2H+1}}  C^{\frac 1{2H+1}}\frac {2H+1} \pi  \int_0^\infty
  \frac{ 1}{y^{2H+1} +1}dy  \simeq  a^{-\frac {4H}{2H+1}}  C^{\frac 1{2H+1}} \frac 1 {\sin \frac {\pi}{2H+1}}
\end{align*}
and in turn the claimed formula.\qed

\subsection*{Acknowledgements} We are grateful to  Ya.~Yu.~Nikitin and  A.~I.~Nazarov for drawing
our attention to their papers \cite{NN04ptrf,NN04tpa} and the useful suggestions, which helped to improve presentation of our results.


\def\cprime{$'$} \def\cprime{$'$} \def\cydot{\leavevmode\raise.4ex\hbox{.}}
  \def\cprime{$'$} \def\cprime{$'$} \def\cprime{$'$}

\end{document}